   \renewcommand{\footnote}[1]{
  \textsuperscript{ %ecriture en exposant
    \addtocounter{footnote}{1} %incrementation du compteur
    (\thefootnote) % impression au format "(compteur)"
  }
   \footnotetext{#1} % la note de bas de page
}
\newtheorem{prop}{Proposition}%[chapter]
\newtheorem{thm}{Theorem}%[chapter]
\newtheorem{lem}{Lemma}%[chapter]
\newtheorem{cor}{Corollary}%[chapter]
\newtheorem{lemimnoi}{Lemma}
\newtheorem{remark}{Remark}%[chapter]
\title{ LOCAL MINIMIZERS OF THE GINZBURG-LANDAU FUNCTIONAL WITH PRESCRIBED DEGREES}
\author{Mickaël~Dos\,Santos \footnote{Université de Lyon
CNRS UMR 5208
INSA de Lyon
Institut Camille Jordan
20, avenue Albert Einstein
69621 Villeurbanne Cedex
France}
\\{\tt dossantos@math.univ-lyon1.fr}
}
\numberwithin{equation}{section}
\def\v{\varepsilon}
\def\R{\mathbb{R}}
\def\Z{\mathbb{Z}}
\def\N{\mathbb{N}}
\def\C{\mathbb{C}}
\def\dom{\mathcal{D}}
\def\O{\Omega}
\def\n{\nabla}
\def\p{\partial}
\def\*{u_{*}}
\def\be{\begin{eqnarray}}
\def\ee{\end{eqnarray}}
\def\no{\nonumber}
\def\1{\textrm{1\kern-0.25emI}}
\def\deg{{\rm deg}}
\def\dist{{\rm dist}}
\def\tr{{\rm tr}}
\def\Div{{\rm div}}
\def\H{\mathcal{H}}
\def\mint_#1{\mathchoice 
{\mathop{\vrule width 6pt height 3 pt depth -2.5pt 
\kern -8.8pt \intop}\nolimits_{#1}}% 
{\mathop{\vrule width 5pt height 3 pt depth -2.6pt 
\kern -6.5pt \intop}\nolimits_{#1}}% 
{\mathop{\vrule width 5pt height 3 pt depth -2.6pt 
\kern -6pt \intop}\nolimits_{#1}}% 
{\mathop{\vrule width 5pt height 3 pt depth -2.6pt 
\kern -6pt \intop}\nolimits_{#1}}}  
\def\d{\displaystyle}
\def\Eeps(u){E_\v(u):=\frac{1}{2}\int_A{|\n u|^2\di x}+\frac{1}{4\v^2}\int_A{\left(1-|u|^2\right)^2\di x}}
\def\k{\kappa}
\def\F{\mathcal{F}}
\def\J{\mathcal{J}}
\def\ab{{\rm abdeg}}
\def\di{\,\text{d}}
\def\db{\mathbf{d}}
\def\tw{\tilde{w}}
\def\faible{\rightharpoonup}
\def\Jp{\J_{\bfp,q}^\db}
\def\bfp{{\bf p}}
\def\tae{\textsf{\ae}}
\def\o{\omega}
\def\goto{\rightarrow}
\begin{document}
\maketitle
\begin{abstract}
We consider, in a smooth bounded multiply connected domain $\dom\subset\R^2$, the Ginzburg-Landau energy $E_\v(u)=\frac{1}{2}\int_\dom{\left\{|\n u|^2+\frac{1}{2\v^2}(1-|u|^2)^2\right\}}$ subject to prescribed degree conditions on each component of $\p\dom$. In general, minimal energy maps do not exist \cite{BeMi1}. When $\dom$ has a single hole, Berlyand and Rybalko \cite{BeRy1} proved that for small $\v$ local minimizers do exist. We extend the result in \cite{BeRy1}: $\d E_\v(u)$ has, in domains $\dom$ with $2,3,...$ holes and for small $\v$, local minimizers. Our approach is very similar to the one in \cite{BeRy1}; the main difference stems in the construction of test functions with energy control. 
\\{\bf Keywords:} Ginzburg-Landau functional,   prescribed degrees,  local minimizers
\\{\bf 2000 MSC:}  35J20,   35B25
\end{abstract}
%\minitoc
%\newpage
 
\tableofcontents
\newcounter{3.eps}%[3.eps] 
\newcommand\etiquette[2]{\refstepcounter{#1}\arabic{#1}\label{#2}}
\section{Introduction}\label{S1.IntroductionLocalMin}

This article deals with the existence problem of local minimizers of the %(without magnetic field) 
Ginzburg-Landau functional with prescribed degrees in a 2D perforated domain $\dom$. 

The domain we consider is of the form $\dom=\O\setminus\cup_{i\in\N_N}\overline{\o}_i$, where $N\in\N^*$, $\O$ and the $\o_i$'s are simply connected, bounded and smooth open sets of $\R^2$. 

We assume that $\overline{\o_i}\subset\O$ and $\overline{\o_i}\cap\overline{\o_j}=\emptyset$ for $i,j\in\N_N:=\{1,...,N\},i \neq j$. %The case of $N=1$ was treated in \cite{BeRy1}.

%Our main result is the existence for all $((p_1,...,p_N),q)\in\Z^{N+1}$ and for $\v>0$ small enough of solution of 
%\begin{equation}\label{3.8}
%-\Delta u=\frac{1}{\v^2}u(1-|u|^2)\text{ in }A,
%\end{equation}
%\begin{equation}\label{3.8'}
%|u|=1\text{ et }u\times\p_\nu u=0\text{ on }\p A,
%\end{equation}
%\begin{equation}\label{3.8deg}
%\deg_{\p\omega_i}(u)=p_i,\,\deg_{\p\O}(u)=q.
%\end{equation}
%Where $u:A\rightarrow\C\simeq\R^2$ and for $\gamma$ a Jordan curve, $\deg_\gamma u$ is the topological degree of $u$ on $\gamma$.

%(\ref{3.8}) is the Euler-Lagrange PDE associate to the
The Ginzburg-Landau functional is
\begin{equation}\label{3.1}
E_\v(u,\dom):=\frac{1}{2}\int_\dom{\left\{|\n u|^2+\frac{1}{2\v^2}\left(1-|u|^2\right)^2\right\}\di x}
\end{equation}
with $u:\dom\rightarrow\C\simeq\R^2$ and $\v$ is a positive parameter (the inverse of $\k$, the Ginzburg-Landau parameter). %Here we consider $\sqrt 2>\v>0$, then $E_\v$ models the state of a type II superconductor (see \cite{SS1}).%, the coherency length). %$\v>0$ is a parameter depending of the material. 

When there is no ambiguity we will write $E_\v(u)$ instead of $E_\v(u,\dom)$.

Functions we will consider belong to the class%In this article the set of test functions is
\[
\J=\left\{u\in H^1(\dom,\C)\,|\,|u|=1\text{ on }\p \dom\right\}.
\]
Clearly, $\J$ is closed under weak  $H^1-$convergence.

This functional is a simplified version of the Ginzburg-Landau functional which arises in superconductivity (or superfluidity) to model the state of a superconductor submitted to a magnetic field (see, \emph{e.g.}, \cite{Tin1} or \cite{SS1}). The simplified version of the Ginzburg-Landau functional considered in \eqref{3.1} ignores the magnetic field. The issue we consider in this article is existence of local minimizers with prescribed degrees on $\p\dom$.
%$\v$ behaves as the inverse of the temperature (see \cite{SS1}) and depends of the material. 
%The superconductivity phenomena appears in the superconductor only under a critical temperature (which varies with the material). Above this critical temperature, the superconductivity is destroyed. 

We next formulate rigorously the problem discussed in this article. To this purpose, we start by defining properly the degrees of a map $u\in\J$. For $\gamma\in\{\p\O,...,\p\omega_N\}$ and $u\in\J$ we let
\[
\deg_\gamma (u)=\frac{1}{2\pi}\int_\gamma{u\times\p_\tau u\di\tau}.%\in\Z.
\]
Here:
\begin{itemize}
\item each $\gamma$ is directly (counterclockwise) oriented,
\item $\tau=\nu^\bot$, $\tau$ is the tangential vector of $\gamma$ and $\nu$ the outward normal to $\O$ if $\gamma=\p\O$ or $\o_i$ if $\gamma=\p\o_i$, 
\item $\p_\tau=\tau\cdot\n$, the tangential derivative  and $"\cdot"$ stands for the scalar product in $\R^2$,
\item $"\times"$ stands for the vectorial product in $\C$, $(z_1+\imath z_2)\times(w_1+\imath w_2):=z_1w_2-z_2w_1,\:z_1,z_2,w_1,w_2\in\R$,  
\item the integral over $\gamma$ should be understood using the duality between $H^{1/2}(\gamma)$ and $H^{-1/2}(\gamma)$ (see, \emph{e.g.}, \cite{BeMi1} definition 1).
\end{itemize}

It is known that $\deg_\gamma (u)$ is an integer see \cite{BeMi1} (the introduction) or \cite{B1}.

We denote the (total) degree of $u\in\J$ in $\dom$ by
\[
\deg(u,\dom)=\left(\deg_{\p\omega_1}(u),...,\deg_{\p\omega_N}(u),\deg_{\p\Omega}(u)\right)\in\Z^N\times\Z.
\]

For $({\bf p},q)\in\Z^N\times\Z$, we are interested in the minimization of $E_\v$ in 
\[
\J_{{\bf p},q}:=\left\{u\in\J\,|\,\deg(u,\dom)=({\bf p},q)\right\}.
\] 

There is an huge literature devoted to the minimization of $E_\v$. In a simply connected domain $\O$, the minimization problem of $E_\v$ with the Dirichlet boundary condition $g\in C^\infty(\p\O, \mathbb{S}^1)$ is studied in detail in \cite{BBH}. $E_\v$ has a minimizer for each $\v>0$. This minimizer need not to be unique. In this framework, when $\deg_{\p\O}(g)\neq0$, the authors studied the asymptotic behaviour of a sequence of minimizers (when $\v_n\downarrow0$) and  point out the existence (up to subsequence) of a finite set of singularities of the limit.

Other types of boundary conditions were studied, like Dirichlet condition $g\in C^\infty(\p\O,\C\setminus\{0\})$ (in a simply connected domain $\O$) in \cite{AS1} and later for $g\in C^\infty(\p\O,\C)$ (see \cite{AS2}).

If the boundary data is not $u_{|\p\dom}$, but a given set of degrees, then the existence of local minimizers is non trivial. Indeed, one can show that $\J_{{\bf p},q}$ is not closed under weak $H^1$-convergence (see next section), so that one cannot apply the direct method in the calculus of variations in order to derive existence of minimizers. Actually this is not just a technical difficulty, since in general the {\it infimum} of $E_\v$ in $\J_{{\bf p},q}$ is {\bf not} attained, we need more assumptions like the value of the $H^1-$capacity of $\dom$ (see \cite{BGR1} and \cite{BeMi1}).

Minimizers $u$ of $E_\v$ in $\J_{{\bf p},q}$, if they do exist, satisfy the equation
\begin{equation}\label{3.Gleq}
\left\{
\begin{array}{cl}
\d-\Delta u=\frac{u}{\v^2}(1-|u|^2)&\text{in }\dom
\\
|u|=1&\text{on }\p\dom
\\
u\times\p_\nu u=0&\text{on }\p\dom
\\
\deg(u,\dom)=({\bf p},q)&
\end{array}
\right.
\end{equation}
where $\p_\nu$ denotes the normal derivative, \emph{i.e.}, $\d\p_\nu=\frac{\p}{\p \nu}=\nu\cdot\n$.

Existence of local minimizers of $E_\v$ is obtained following the same lines as in \cite{BeRy1}. It turns out that, even if the {\it infimum} of $E_\v$ in $\J_{{\bf p},q}$ is not attained, \eqref{3.Gleq} may have solutions. This was established by Berlyand and Rybalko when $\dom$ has a single hole, \emph{i.e.}, when $N=1$.
Our main result is the following generalisation of the main result in \cite{BeRy1}:
\begin{thm}\label{T3.main}
Let $({\bf p},q)\in\Z^N\times\Z$ and let $M\in\N^*$, there is $\v_{\etiquette{3.eps}{3.eps1}} ({\bf p},q,M)>0$  s.t. for $\v<\v_{\ref{3.eps1}}$, there are at least $M$ locally minimizing solutions.% of \eqref{3.Gleq}. 
\end{thm}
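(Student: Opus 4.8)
The strategy follows Berlyand–Rybalko closely, replacing their single-hole construction with one adapted to $N$ holes. The plan is to produce, for each $\v$ small, at least $M$ distinct critical points of $E_\v$ in $\J_{\bfp,q}$ by a local minimization argument on a suitable sublevel set. First I would fix the prescribed degrees $(\bfp,q)$ and construct, for each integer $m\in\{1,\dots,M\}$, a family of test maps $u_\v^m\in\J_{\bfp,q}$ whose energies are \emph{quantitatively separated}: $E_\v(u_\v^m)$ should be of order $\pi|\log\v|\cdot(\text{leading coefficient}) + c_m + o(1)$, where the constants $c_m$ differ by amounts bounded below independently of $\v$. The construction uses the conformal/harmonic structure of the perforated domain: on each boundary component one prescribes a map of modulus $1$ realizing the required degree, extended inside by an $\mathbb S^1$-valued harmonic-type map away from a controlled region, with the ``excess'' degree absorbed by a bubble whose position or scale depends on $m$ (this is exactly where the main new work lies, as the abstract warns).

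Next I would set up the variational framework. Because $\J_{\bfp,q}$ is not weakly $H^1$-closed, direct minimization on all of $\J_{\bfp,q}$ may fail; instead I would minimize $E_\v$ over the set
\[
\mathcal U_\v^m:=\left\{u\in\J_{\bfp,q}\,:\,E_\v(u)\le E_\v(u_\v^m)+\delta\right\}
\]
for a small fixed $\delta>0$ chosen smaller than half the energy gap between consecutive levels $c_m$. The key analytic point is a \emph{compactness/closedness} statement on this sublevel set: one must show that a minimizing sequence for $E_\v$ in $\mathcal U_\v^m$ cannot lose degree in the weak limit, i.e. the weak $H^1$-limit stays in $\J_{\bfp,q}$. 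This is where the energy bound is used decisively — degree loss across $\p\dom$ costs a definite amount of energy (a lower bound of the type in \cite{BBH}, \cite{BeMi1} for maps nearly $\mathbb S^1$-valued on the boundary), and choosing $\v$ small and $\delta$ small enough rules it out. Hence the infimum on $\mathcal U_\v^m$ is attained by some $u^{(m)}_\v$.

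Then I would verify that each minimizer $u^{(m)}_\v$ lies in the \emph{interior} of $\mathcal U_\v^m$ relative to $\J_{\bfp,q}$: since $E_\v(u^{(m)}_\v)\le E_\v(u_\v^m)<E_\v(u_\v^m)+\delta$ strictly, small $H^1$-perturbations preserving the degrees stay inside $\mathcal U_\v^m$, so $u^{(m)}_\v$ is a genuine local minimizer of $E_\v$ in $\J_{\bfp,q}$ and therefore a solution of \eqref{3.Gleq}. Finally, distinctness of the $M$ solutions: by the quantitative energy separation of the test maps and the smallness of $\delta$, the values $E_\v(u^{(1)}_\v),\dots,E_\v(u^{(M)}_\v)$ fall in disjoint intervals, so the solutions are pairwise distinct. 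Collecting the finitely many smallness requirements on $\v$ (one per level $m$, plus the compactness threshold) and taking the minimum yields the desired $\v_{\ref{3.eps1}}(\bfp,q,M)$.

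\textbf{Main obstacle.} The crux — and, per the abstract, the only substantial departure from \cite{BeRy1} — is the construction of the test functions $u_\v^m$ in the multiply connected setting with \emph{sharp} energy control: one needs the leading $|\log\v|$ term and the next-order constant to be computed precisely enough to guarantee the energy gaps, which requires understanding the interaction between the $N$ holes (via the period/capacity matrix of $\dom$) and the localized bubbles carrying the surplus degree. The degree-loss lower bound on $\mathcal U_\v^m$ is the second delicate point, but it is essentially available from the cited literature once the test-function energies are pinned down.
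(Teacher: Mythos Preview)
Your proposal misidentifies the energy scale and, more importantly, misses the mechanism that makes the argument work. There is \emph{no} $\pi|\log\v|$ term here. In this degree-boundary-condition problem the relevant energies remain \emph{bounded} as $\v\to0$: the paper shows $m_\v(\bfp,q,\db)=I_0(\db,\dom)+\pi\,\tae((\db,d),(\bfp,q))+o(1)$, where $I_0(\db,\dom)$ is the Dirichlet energy of an $\mathbb S^1$-valued harmonic map. No interior vortices are forced; the ``surplus degree'' bubbles escape to $\p\dom$ at cost $\pi$ each, not $\pi|\log\v|$. Your sentence about bubbles with $|\log\v|$ cost and a ``period/capacity matrix'' interaction describes a different problem.

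The deeper gap is your compactness step on the sublevel sets $\mathcal U_\v^m$. You say that ``degree loss across $\p\dom$ costs a definite amount of energy'' and that this rules out jumping. It does not. A minimizing sequence in $\mathcal U_\v^m\subset\J_{\bfp,q}$ may well converge weakly to some $u$ with $\deg(u,\dom)\neq(\bfp,q)$: Lemma~1 of \cite{BeMi1} only gives $\liminf E_\v(u_n)\ge E_\v(u)+\pi\,\tae$, so the \emph{limit's} energy can be strictly smaller and $u\notin\mathcal U_\v^m$. Nothing in a pure energy-sublevel constraint prevents this; in fact minimizing sequences in $\J_{\bfp,q}$ typically do change degree in the limit (this is exactly why global minimizers need not exist). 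The paper resolves this with the \emph{approximate bulk degree} $\ab(u)$ (Section~2): it is weakly $H^1$-continuous on bounded energy sets, integer-valued in the limit $\v\to0$, and gives a genuine partition of $\J_{\bfp,q}$ into disjoint classes $\J_{\bfp,q}^{\db}$ indexed by $\db\in\Z^N$. The $M$ solutions are obtained by choosing $M$ distinct bulk degrees $\db_1,\dots,\db_M$ (all ``good'' in the sense $p_i\le d_i$, $q\le\sum d_i$) and invoking Theorem~\ref{T3.Existence}, which says $m_\v(\bfp,q,\db_k)$ is attained for small $\v$. Distinctness is immediate from $\J_{\bfp,q}^{\db_k}\cap\J_{\bfp,q}^{\db_j}=\emptyset$ for $k\neq j$; no energy-gap bookkeeping is needed.

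Finally, even Theorem~\ref{T3.Existence} is not proved by showing that the limit stays in $\J_{\bfp,q}$. The limit $u_\v$ of a minimizing sequence in $\J_{\bfp,q}^{\db}$ may land in $\J_{\bfp',q'}^{\db}$ with $(\bfp',q')\neq(\bfp,q)$; the paper shows this leads to a contradiction by the key Lemma~\ref{L3.15soft}, which constructs, from any solution of the Euler--Lagrange equation with a point on $\p\o_i$ where $u\times\p_\tau u>0$, a competitor in $\J_{(\bfp,q)-{\bf e}_i}^{\db}$ with energy strictly less than $E_\v(u)+\pi$. This ``strict $\pi$'' is what closes the argument, and its proof (the test-function construction of Section~\ref{3.sectionPREUVE}) is the real content alluded to in the abstract --- not the construction of $M$ energy-separated test maps you describe.
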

Actually, we will prove a more precise form of Theorem \ref{T3.main} (see Theorem \ref{T3.Existence}), whose statement relies on the notion of {\it approximate bulk degree} introduced in \cite{BeRy1} and generalised in the next section.

The main difference with respect to \cite{BeRy1} stems in the construction of the test functions with energy control in section \ref{3.sectionPREUVE}. In a sense that will be explained in details in section \ref{3.sectionPREUVE}, our construction is local, while the one in \cite{BeRy1} is global. We also simplify and unify some proofs in \cite{BeRy1}.

We do not know whether the conclusion of theorem \ref{T3.main} still holds when $\dom$ has no holes at all. That is, we do not know whether for a simply connected domain $\O$, a given $d\in\Z^*$ and small $\v$, the problem
\begin{equation}\label{3.Gleqbis}
\left\{
\begin{array}{cl}
\d-\Delta u=\frac{u}{\v^2}(1-|u|^2)&\text{in }\O
\\
u\times\p_\nu u=0&\text{on }\p\O
\\
|u|=1&\text{on }\p\O
\\

\deg_{\p\O}(u)=d&
\end{array}
\right.
\end{equation}
has solutions. Existence of a solution of \eqref{3.Gleqbis} is clear when $\O$ is a disc, say $\O=D(0,R)$ (it suffices to consider a solution of $-\Delta u=\frac{u}{\v^2}(1-|u|^2)$ of the form $\d u(z)=f(|z|)\left(\frac{z}{|z|}\right)^d$ with $\d u_{|\p\O}=\left(\frac{z}{|z|}\right)^d$). However, we do not know the answer when $\O$ is not radially symmetric anymore.

\section{The approximate bulk degree}
This section is a straightforward adaptation of \cite{BeRy1}.
 
Existence of (local) minimizers for $E_\v$ in $\J_{{\bf p},q}$ is not straightforward since $\J_{{\bf p},q}$ is not closed under weak $H^1-$convergence. A typical example (see \cite{BeMi1}) is a sequence $(M_n)_n$ s.t.
\[
\begin{array}{ccccc} M_n: &D(0,1)&\rightarrow &D(0,1)&\\&x&\mapsto&\displaystyle\frac{x-(1-1/n)}{(1-1/n)x-1}&\end{array},
\]
where $D(0,1)\subset\C$ is the open unit disc centered at the origin. Then $M_n\faible 1$ in $H^1$, $\deg_{\mathbb{S}^1}(M_n)=1$ and $\deg_{\mathbb{S}^1}(1)=0$.

To obtain local minimizers, Berlyand and Rybalko (in \cite{BeRy1}) devised a tool: the {\it approximate bulk degree}.    
We adapt this tool for a multiply connected domain.

We consider, for $i\in\N_N:=\{1,...,N\}$, $V_i$ the unique solution of 
\begin{equation}\label{3.3}
\left\{\begin{array}{cl}
-\Delta V_i=0&\text{in }\dom
\\
V_i=1&\text{on }\p \dom\setminus\p\o_i
\\
V_i=0&\text{on }\p \omega_i
\end{array}\right..
\end{equation}
For $u\in\J$, we set, noting $\d\p_k u=\frac{\p}{\p x_k}u$
\begin{equation}\label{3.4}
\ab_i(u,\dom)=\frac{1}{2\pi}\int_\dom{u\times(\p_1 V_i\,\p_2u-\p_2V_i\,\p_1u)\di x},
\end{equation}
\begin{equation}\no
\ab(u,\dom)=\,\left(\,\ab_1(u,\dom)\,,...,\, \ab_N(u,\dom)\,\right).
\end{equation}
Following \cite{BeRy1}, we call $\ab(u,\dom)$ the {\it approximate bulk degree} of $u$. $\ab_i:\J\rightarrow\R$, in general, is not an integer (unlike the degree). However, we have
\begin{prop}\label{P3.1}
\begin{enumerate}[1)]
\item If $u\in H^1(\dom,\mathbb{S}^1)$, then $\ab_i(u,\dom)=\deg_{\p\omega_i}(u)$;
\item Let $\Lambda,\v>0$ and $u,v\in\J$ s.t. $E_\v(u),E_\v(v)\leq\Lambda$, then
\begin{equation}\label{3.5}
|\ab_i(u)-\ab_i(v)|\leq\frac{2}{\pi}\|V_i\|_{C^1(\dom)}\Lambda^{1/2}\|u-v\|_{L^2(\dom)};
\end{equation}
\item Let $\Lambda>0$ and $(u_\v)_{\v>0}\subset\J$ s.t. for all $\v>0$, $E_\v(u_\v)\leq\Lambda$, then
\begin{equation}\label{3.6}
\dist(\ab(u_\v),\Z^N)\rightarrow0\text{ when }\v\rightarrow0.
\end{equation}
\end{enumerate}
\end{prop}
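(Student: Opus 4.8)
The plan is to treat the three items in order, as each feeds the next. For item 1), when $u\in H^1(\dom,\mathbb{S}^1)$ we have $u\times\p_k u = -\imath\bar u\,\p_k u$ pointwise (writing $u=\mathrm{e}^{\imath\phi}$ locally, $u\times\p_ku=\p_k\phi$), so the integrand in \eqref{3.4} becomes $\p_1V_i(u\times\p_2 u)-\p_2V_i(u\times\p_1u)$, which is precisely $\n^\perp V_i\cdot (u\times\n u)$. I would recognize this as a null-Lagrangian-type expression: since $\Div(u\times\n u)=0$ in $\dom$ (this is the key identity for $\mathbb{S}^1$-valued maps, equivalent to $\mathrm{curl}\,\n\phi=0$ locally) and $\Delta V_i=0$, one integrates by parts to convert the bulk integral into a boundary integral. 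Concretely, $\int_\dom \n^\perp V_i\cdot(u\times\n u) = \int_{\p\dom} V_i\,(u\times\p_\tau u)\,\di\tau$ after an integration by parts that uses both harmonicity of $V_i$ and divergence-freeness of $u\times\n u$; by the boundary conditions \eqref{3.3}, $V_i$ is $1$ on $\p\o_i$ — wait, it is $0$ on $\p\o_i$ and $1$ on the rest — so one must be careful with orientations, but the upshot is that only the $\p\o_i$ component (or its complement, up to the total-degree constraint and the fact that $u\times\p_\tau u$ integrates to $2\pi$ times an integer on each component) survives and yields $2\pi\deg_{\p\o_i}(u)$. I would double-check the sign via the disc model $u=(z/|z|)^{p}$ on an annulus.

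For item 2), this is the Lipschitz-type continuity estimate and is essentially bilinear algebra plus Cauchy--Schwarz. I would write $\ab_i(u)-\ab_i(v)$ using the bilinearity of the integrand in \eqref{3.4}: the map $(a,b)\mapsto a\times(\p_1V_i\,\p_2 b-\p_2 V_i\,\p_1 b)$ splits as a difference $u\times(\cdots u)-v\times(\cdots v)=(u-v)\times(\cdots u)+v\times(\cdots(u-v))$. For the first piece, $|(u-v)\times(\n^\perp V_i\cdot\n u)|\le |u-v|\,\|\n V_i\|_\infty\,|\n u|$, and integrating with Cauchy--Schwarz gives $\le\|\n V_i\|_\infty\|u-v\|_{L^2}\|\n u\|_{L^2}\le\|V_i\|_{C^1}\|u-v\|_{L^2}(2\Lambda)^{1/2}$ since $\tfrac12\|\n u\|_{L^2}^2\le E_\v(u)\le\Lambda$. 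The second piece is handled the same way with $\|\n v\|_{L^2}\le(2\Lambda)^{1/2}$. Adding the two and noting $2\cdot(2\Lambda)^{1/2}/(2\pi)=(2/\pi)\Lambda^{1/2}$ recovers \eqref{3.5} exactly (the $2\pi$ in the denominator of the definition absorbing one factor of $2$).

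For item 3), I would combine items 1) and 2) with the standard fact that any $u_\v\in\J$ with $E_\v(u_\v)\le\Lambda$ can be approximated in $L^2$ by an $\mathbb{S}^1$-valued map with comparable energy. The cleanest route: fix $\v>0$, and use that near $\p\dom$ the map $u_\v$ has $|u_\v|$ close to $1$ in an averaged sense, but more robustly, one uses a projection/truncation $v_\v=u_\v/|u_\v|$ on the set where $|u_\v|\ge 1/2$ — however $v_\v$ need not be $H^1$ globally because of possible zeros. The correct tool here is the one already invoked implicitly in the paper: one builds an $\mathbb{S}^1$-valued comparison map on $\dom$ agreeing with $u_\v$ on $\p\dom$ with $\|u_\v - v_\v\|_{L^2}=o(1)$ as $\v\to0$ — this follows from the fact that $\frac{1}{4\v^2}\int_\dom(1-|u_\v|^2)^2\le\Lambda$ forces $\|1-|u_\v|^2\|_{L^2}^2\le 4\Lambda\v^2\to0$, hence $\||u_\v|-1\|_{L^2}\to 0$, and combined with the $H^1$ bound and a covering/cut-off argument the zero set contributes negligibly in $L^2$. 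Then $\ab(v_\v)=\deg(v_\v,\dom)\in\Z^N$ by item 1), and by item 2), $|\ab_i(u_\v)-\ab_i(v_\v)|\le C\Lambda^{1/2}\|u_\v-v_\v\|_{L^2}\to0$, giving \eqref{3.6}.

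The main obstacle is the $\mathbb{S}^1$-valued approximation in item 3): one must produce $v_\v\in H^1(\dom,\mathbb{S}^1)$ with a bona fide \emph{integer} degree vector (not merely $|v_\v|$ close to $1$) while keeping $\|u_\v-v_\v\|_{L^2}=o(1)$; the subtlety is that $u_\v$ may vanish inside $\dom$, and one cannot simply normalize. I expect the paper handles this by a standard Ginzburg--Landau argument — either a "bad discs" covering à la \cite{BBH} showing the zero set is confined to small-measure regions, or by noting the estimate is only needed for the specific energy-bounded sequences arising later — but in a self-contained plan this is the step requiring the most care, and where one should cite the precise lemma from \cite{BeRy1} or \cite{BeMi1} that guarantees such an approximation.
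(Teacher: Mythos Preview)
Your approach to item 1) matches the paper's. For item 2), however, there is a genuine gap: in your splitting $B(u,u)-B(v,v)=B(u-v,u)+B(v,u-v)$ with $B(a,b)=\int_\dom a\times(\p_1V_i\,\p_2b-\p_2V_i\,\p_1b)$, the second piece carries the gradient on $u-v$, so a direct Cauchy--Schwarz there yields a factor $\|\n(u-v)\|_{L^2}$ rather than the $\|u-v\|_{L^2}$ required for \eqref{3.5}; your claim that it is ``handled the same way with $\|\n v\|_{L^2}$'' does not follow as written. The paper repairs this by first establishing the symmetry $B(a,b)=B(b,a)$ through an integration by parts whose boundary term vanishes because $V_i$ is locally constant on $\p\dom$ (so $\p_\tau V_i=0$); this converts $B(v,u-v)$ into $B(u-v,v)$, placing $u-v$ in the undifferentiated slot, after which both pieces are bounded by $C\|V_i\|_{C^1}\|u-v\|_{L^2}\Lambda^{1/2}$ as desired.

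For item 3), you correctly flag the main obstacle in your route --- producing a genuine $\mathbb{S}^1$-valued $H^1$ approximant despite possible zeros of $u_\v$ --- but the paper bypasses it with a soft compactness argument requiring no construction at all. One sets $E_0^\Lambda=\{v\in H^1(\dom,\mathbb{S}^1):\tfrac12\int_\dom|\n v|^2\le\Lambda\}$ and argues by contradiction: if $\inf_{v\in E_0^\Lambda}\|u_{\v_n}-v\|_{L^2}\ge\delta>0$ along some $\v_n\downarrow0$, the energy bound gives a weak $H^1$ (hence strong $L^4$) subsequential limit $u$; the potential term forces $|u|=1$ a.e., and weak lower semicontinuity gives $u\in E_0^\Lambda$, so $\|u_{\v_n}-u\|_{L^2}\to0$ contradicts the assumption. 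No bad-disc covering or external lemma from \cite{BeRy1} or \cite{BeMi1} is needed.
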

Proof of Proposition \ref{P3.1} is postponed to Appendix \ref{3.Apres}.

We define for $\mathbf{d}=(d_1,...,d_N)\in\Z^N$, $\bfp=(p_1,...,p_N)\in\Z^N$ and $q\in\Z$, 
\[
\d\J_{\bfp,q}^\db=\d\J_{\bfp,q}^\db(\dom):=\left\{u\in\J_{\bfp,q}\,|\,\|\ab(u)-\db\|_{\infty}:=\max_{i\in\N_N}{|d_i-\ab_i(u)|}\leq\frac{1}{3}\right\}.
\]
The following result states that $\Jp$ in never empty for $({\bf p},q,\db)\in\Z^N\times\Z\times\Z^N$.

\begin{prop}\label{P3.sens}
Let $({\bf p},q,\db)\in\Z^N\times\Z\times\Z^N$. Then $\Jp\neq\emptyset$.
\end{prop}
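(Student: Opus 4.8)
The plan is to construct, for any prescribed $({\bf p},q,\db)\in\Z^N\times\Z\times\Z^N$, an explicit map $u\in\J_{\bfp,q}$ whose approximate bulk degree equals $\db$ exactly (in particular within $1/3$ of $\db$). The natural first step is to reduce to the case where there are no ``bubbles'' to insert, i.e.\ to realize an arbitrary degree vector $({\bf p},q)$ on $\p\dom$ together with a prescribed bulk degree $\db$ by a map taking values in $\mathbb{S}^1$ away from small disjoint disks. Indeed, by Proposition \ref{P3.1}(1), any $u\in H^1(\dom,\mathbb{S}^1)$ already has $\ab_i(u)=\deg_{\p\omega_i}(u)$, so an $\mathbb{S}^1$-valued map achieving degrees $(\db,q)$ on the boundary components automatically has $\ab(u)=\db$; the only issue is compatibility between the boundary degrees I want to prescribe and the values $p_i$.

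First I would fix a smooth $\mathbb{S}^1$-valued reference map $w$ on $\dom$ with $\deg_{\p\omega_i}(w)=d_i$ for $i\in\N_N$ and $\deg_{\p\Omega}(w)=q$ — this exists because the total degree constraint on a planar domain with $N$ holes is $\deg_{\p\Omega}=\sum_i\deg_{\p\omega_i}$ only for maps into $\mathbb{S}^1$, so I should instead take $q_0:=\sum_i d_i$ and build $w$ with those degrees; concretely $w=\prod_i\left(\frac{z-a_i}{|z-a_i|}\right)^{d_i}$ for points $a_i\in\omega_i$ works. By Proposition \ref{P3.1}(1), $\ab(w)=\db$. Now I need to correct the boundary degrees from $(\db,q_0)$ to $({\bf p},q)$ without changing the bulk degree much. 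To change $\deg_{\p\omega_i}$ from $d_i$ to $p_i$ I multiply $w$ locally, in a collar neighborhood $\mathcal{N}_i$ of $\p\omega_i$ contained in $\dom$, by a map of degree $p_i-d_i$ that equals $1$ on the inner boundary of the collar; this affects only $\ab_i$ through the term $\int_{\mathcal{N}_i}$, and by choosing the collar thin one controls that contribution — but more simply, one inserts this degree change via a small disk, i.e.\ replace $w$ on a disk $D_i\subset\dom$ near $\p\omega_i$ by a standard ``hedgehog'' $f(|z-c_i|/\rho)(\frac{z-c_i}{|z-c_i|})^{p_i-d_i}$ glued to $w$; since $V_i$ is smooth, the resulting change in $\ab_j$ is $O(\rho)$ by the $C^1$ estimate underlying Proposition \ref{P3.1}(2), hence $<1/3$ for $\rho$ small. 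The analogous correction on $\p\Omega$ adjusts $q$.

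The cleanest route, though, is to invoke Proposition \ref{P3.1}(2) directly: start from $w\in H^1(\dom,\mathbb{S}^1)$ with $\ab(w)=\db$ and the degrees I can realize with an $\mathbb{S}^1$-map, then modify $w$ on disjoint small disks $D_k\Subset\dom$ to inject the required degree changes on each boundary component (this is the standard construction showing $\J_{\bfp,q}\neq\emptyset$, cf.\ the discussion after \eqref{3.Gleq}). The modified map $v$ agrees with $w$ outside $\cup_k D_k$, and $\|v-w\|_{L^2(\dom)}\le C(\sum_k|D_k|)^{1/2}\to0$ as the radii shrink, while $E_\v(v)$ stays bounded by some $\Lambda$ independent of the radii once one uses a fixed-profile hedgehog scaled to each disk (here $\v$ is fixed). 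Then \eqref{3.5} gives $|\ab_i(v)-\ab_i(w)|\le C'(\sum_k|D_k|)^{1/2}<1/3$ for small enough radii, so $v\in\Jp$.

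I expect the main obstacle to be the \emph{bookkeeping of degrees}: making sure the reference $\mathbb{S}^1$-map $w$ can be chosen so that the degree corrections needed to pass to $({\bf p},q)$ are supported in small disks interior to $\dom$ and do not force a change in $\ab$ larger than $1/3$. This is really the content of ``$\J_{\bfp,q}\neq\emptyset$ together with bulk-degree control,'' and it hinges on the locality of the modifications plus the Lipschitz estimate \eqref{3.5}; the energy bound $\Lambda$ is harmless since $\v$ is a fixed positive number here (the uniformity in $\v$, which matters elsewhere, is not needed for this proposition). A secondary point to handle carefully is that $\ab_i$ is defined by an integral over all of $\dom$ against $\n V_i$, so a modification on a disk $D_k$ far from $\p\omega_i$ still contributes; but since $V_i\in C^1(\overline{\dom})$ this contribution is again controlled by $\|V_i\|_{C^1}$ times the $L^2$-size of the perturbation, exactly as in \eqref{3.5}.
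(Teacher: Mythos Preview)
Your first suggestion—multiply the reference $w\in E_{\db}$ in a thin collar $\mathcal{N}_i$ of $\p\o_i$ by a map of degree $p_i-d_i$ equal to $1$ on the inner edge of the collar, and similarly near $\p\O$—is correct and is essentially the paper's argument. The paper phrases it as: take $u\in E_{\db}$, multiply by maps $M_n^i\in\J_{(p_i-d_i){\bf e}_i}$ (and $M_n^0\in\J_{(q-d){\bf e}_0}$) satisfying $M_n^i\faible 1$ in $H^1$ and $|M_n^i|\le 1$; then $u_n:=u\prod_i M_n^i\in\J_{{\bf p},q}$, $u_n\to u$ in $L^2$, and Proposition~\ref{P3.1}(2) (with the obvious energy bound, since $|u_n|\le 1$) gives $\ab(u_n)\to\db$, so $u_n\in\Jp$ for large $n$.

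However, the ``cleanest route'' you then settle on has a genuine gap. If the modified map $v$ agrees with $w$ outside disks $D_k\Subset\dom$, then $\tr_{\p\dom}v=\tr_{\p\dom}w$, and therefore $\deg(v,\dom)=\deg(w,\dom)=(\db,\sum_i d_i)$, \emph{not} $({\bf p},q)$. An interior hedgehog $f(|z-c|/\rho)\big((z-c)/|z-c|\big)^{m}$ introduces a zero of the map but cannot change $\deg_{\p\o_i}$ or $\deg_{\p\O}$, which are determined solely by the boundary trace. The degrees in $\J_{\bfp,q}$ are boundary data; any construction that alters them must modify the trace on the relevant component of $\p\dom$. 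Your collar construction does exactly this, and the $L^2$-smallness needed for the $\ab$-control then comes from letting the collar shrink (equivalently, letting the bubble concentrate at the boundary), not from shrinking an interior disk. So keep the collar/bubble argument and discard the interior-disk version.
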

\begin{proof}For $i\in\{0,...,N\}$, we denote ${\bf e}_i=\left(\delta_{i,1},...,\delta_{i,N},\delta_{i,0}\right)\in\Z^{N+1}$ where 
\[
\delta_{i,k}=\left\{\begin{array}{cl}1&\text{if }i=k \\0&\text{otherwise}\end{array}\right.\text{ is the Kronecker symbol.}
\]
For $i\in\{0,...,N\}$, there is $M^i_n\in\J_{(p_i-d_i){\bf e}_i}$ if $i\neq0$ and $M^0_n\in\J_{(q-\sum d_j){\bf e}_0}$ s.t. $M^i_n\faible 1$ in $H^1$ and $|M^i_n|\leq1$ (Lemmas 6.1 and 6.2 in \cite{BeMi1}). Let
\[
E_\db:=\left\{u\in H^1(\dom,\mathbb{S}^1)\,|\,\deg(u,\dom)=(\db,d)\right\},\db=(d_1,...,d_N),\,d=\sum_{j=1}^Nd_j.
\]
We note that, $E_\db\neq\emptyset$, see, \emph{e.g.}, \cite{BBH}. Let $u\in E_\db$ and $u_n:=u\prod_{i=0}^NM_n^i$. Then we will prove that, for large $n$, we have, up to subsequence, that $u_n\in\Jp$.
Indeed, up to subsequence, 
\[
u_n\faible u\text{ in }H^1,\phantom{a}u_n\in\J_{{\bf p},q}.
\]
Using the fact that $\ab(u)=\db$ and the weak $H^1$-continuity of the {\it approximate bulk degree}, we obtain for $n$ sufficiently large, that $u_n\in\Jp$.
\end{proof}

We denote $m_\v(\bfp,q,\db)$ the \emph{infimum} of $E_\v$ on $\J_{\bfp,q}^\db$, \emph{i.e},
\[
m_\v({\bf p},q,\db)=\inf_{u\in\Jp}{E_\v(u)}
\]
and 
\[
I_0(\db,\dom)=\inf_{u\in E_{\db}}\frac{1}{2}\int_\dom|\n u|^2.
\]
We may now state a refined version of Theorem \ref{T3.main}.
\begin{thm}\label{T3.Existence}
Let ${\bf d }\in{(\N^*)}^{N}$. Then, for all $(p_1,...,p_N,q)\in\Z^{N+1}$
s.t. $q\leq d$ and $p_i\leq d_i$, there is $\v_{\etiquette{3.eps}{3.eps2}}=\v_{\ref{3.eps2}}({\bf p},q,\db)>0$ 
s.t. for $0<\v<\v_{\ref{3.eps2}}$, $m_\v(\bfp,q,\db)$ is attained.

Moreover, we have the following estimate 
\[
m_\v(\bfp,q,\db)= I_0(\db,\dom)+\pi\left(d_1-p_1+...+d_N-p_N+d-q\right)-o_\v(1),\:\:o_\v(1)\underset{\v\rightarrow0}{\rightarrow}0.
\]
\end{thm}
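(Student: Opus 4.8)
The plan is to follow the strategy of Berlyand–Rybalko: one does \emph{not} minimize $E_\v$ directly on $\J_{\bfp,q}$, which would fail since that set is not weakly closed, but on the smaller set $\Jp$, on which the approximate bulk degree constraint $\|\ab(u)-\db\|_\infty\le 1/3$ is added. Because $\ab$ is continuous under weak $H^1$-convergence (Proposition~\ref{P3.1}, item 2) with the degree being an \emph{integer}-valued constraint only ``approximately'', the key point is to show that any minimizing sequence for $m_\v(\bfp,q,\db)$ stays uniformly bounded in $E_\v$, and that along a weak limit the degree constraint $\deg(u,\dom)=(\bfp,q)$ cannot be lost — i.e.\ the bulk-degree cushion of $1/3$ prevents the weak limit from ``leaking'' a unit of degree to the boundary (as in the $M_n\rightharpoonup 1$ example). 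The matching upper and lower bounds on $m_\v(\bfp,q,\db)$ are proved separately, and together they force the minimizer to exist for small $\v$.

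\textbf{Upper bound via test functions.}
First I would construct, for each small $\v$, a test map $u_\v\in\Jp$ with
\[
E_\v(u_\v)\le I_0(\db,\dom)+\pi\bigl(\textstyle\sum_i(d_i-p_i)+(d-q)\bigr)+o_\v(1).
\]
Start from a fixed $\mathbb{S}^1$-valued map $u_0\in E_\db$ realizing (or nearly realizing) $I_0(\db,\dom)$; this contributes the harmonic term $I_0(\db,\dom)$ with no potential cost since $|u_0|\equiv1$. Then one must drop the degree on each $\p\o_i$ from $d_i$ down to $p_i$ (and on $\p\O$ from $d$ down to $q$) by multiplying by ``vortex-removing'' factors localized near the boundary components. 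Each unit of degree removed costs, to leading order, $\pi$ in energy (the standard degree-$\pm1$ vortex cost $\pi\log(1/\v)$ is avoided because the vortex is pushed onto $\p\dom$, leaving a finite $\pi$-cost core profile — this is exactly the half-vortex-on-the-boundary phenomenon from \cite{BeRy1}). This is where the paper's advertised novelty lies: the construction is done \emph{locally}, one boundary component at a time, rather than by a global ansatz. I would build explicit profiles in boundary collar coordinates, patch them to $u_0$ using cutoffs, and check (a) the $H^1$-energy of the patching region is $o_\v(1)$, (b) the potential term concentrates to give exactly $\pi$ per removed unit, and (c) the resulting $\ab(u_\v)$ is within $1/3$ of $\db$ — the latter because the modification is a small $L^2$-perturbation of $u_0$ and $\ab(u_0)=\db$, so Proposition~\ref{P3.1}(2) applies. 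The hypotheses $p_i\le d_i$, $q\le d$ and $\db\in(\N^*)^N$ are exactly what make all these degree reductions ``downward'', so the collar profiles are of the controllable type.

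\textbf{Lower bound and attainment.}
For the lower bound, take a minimizing sequence $(u_n)$ for $m_\v(\bfp,q,\db)$; the upper bound gives a uniform energy bound $\Lambda=\Lambda(\v)$, so up to a subsequence $u_n\rightharpoonup u_\v$ in $H^1$. The set $\{v\in\J:\|\ab(v)-\db\|_\infty\le1/3\}$ is weakly closed by Proposition~\ref{P3.1}(2), so $u_\v$ satisfies the bulk constraint with the same $1/3$; and $\ab$ being within $1/3$ of the integer vector $\db$ \emph{and} within $o_\v(1)$ of $\Z^N$ (Proposition~\ref{P3.1}(3)) pins $\ab(u_\v)$ near $\db$ for small $\v$. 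The crucial step is to rule out degree loss: one shows that if $\deg(u_\v,\dom)\ne(\bfp,q)$ then a unit of energy of order $\pi\log(1/\v)\to\infty$ must have escaped to the boundary in the weak limit, contradicting, for $\v$ small enough, the sharp upper bound which is only $I_0+O(1)$. Equivalently, one compares $m_\v(\bfp,q,\db)$ against $m_\v(\bfp',q',\db)$ for neighboring degrees using the explicit $O(1)$ form of the estimate, and the $\log(1/\v)$ gap wins. Hence $u_\v\in\Jp$ and $E_\v(u_\v)\le\liminf E_\v(u_n)=m_\v(\bfp,q,\db)$, so the infimum is attained. Finally, matching the upper bound with a companion lower bound (itself a $\Gamma$-liminf / vortex-ball lower-bound argument, or a direct comparison as just sketched) yields the stated asymptotics $m_\v(\bfp,q,\db)=I_0(\db,\dom)+\pi(\sum_i(d_i-p_i)+(d-q))-o_\v(1)$.

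\textbf{Main obstacle.}
The real work — and the part I expect to be delicate — is the test-function construction giving the \emph{sharp} constant $\pi$ per removed degree while simultaneously keeping $\|\ab(u_\v)-\db\|_\infty\le1/3$: one needs the degree-removal modifications to be energetically cheap ($O(1)$, not $O(\log(1/\v))$), genuinely localized near distinct boundary components so their costs add without interference, and small enough in $L^2$ that the bulk degree barely moves. Getting all three at once, in a multiply connected domain where the harmonic competitor $u_0$ and the capacity potentials $V_i$ interact, is precisely the technical heart of section~\ref{3.sectionPREUVE}.
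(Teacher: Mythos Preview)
Your upper and lower bounds are both essentially correct and match the paper's Lemmas~\ref{L3.bornesup} and~\ref{L3.5}. The upper bound is actually easier than you indicate: the ``$\pi+\eta$ per unit'' construction of Lemma~\ref{P3.bougedegmult} already gives $m_\v(\bfp,q,\db)\le I_0(\db,\dom)+\pi\tae((\db,d),(\bfp,q))$ exactly, by letting $\eta\to0$; no sharp test function is needed here.

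The genuine gap is in your attainment argument. You claim that if the weak limit $u_\v$ has the wrong degree then ``a unit of energy of order $\pi\log(1/\v)\to\infty$ must have escaped to the boundary''. This is false: the Price Lemma (Lemma~\ref{L3.6}) says the cost of losing one unit of boundary degree under weak convergence is exactly~$\pi$, not $\pi\log(1/\v)$. Interior vortices, which do cost $\pi|\log\v|$, play no role here. Since the upper and lower bounds for $m_\v$ match to within $o_\v(1)$, a flat energy comparison cannot rule out $\deg(u_\v,\dom)\neq(\bfp,q)$: the weak limit could a priori sit in a neighbouring class $\J^{\db}_{\bfp',q'}$ with $\tae((\bfp',q'),(\db,d))<\tae((\bfp,q),(\db,d))$ and pay back exactly the $\pi$ difference via Lemma~\ref{L3.6}.

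The paper's mechanism is different and more delicate. By Proposition~\ref{P3.min}, the weak limit $u_\v$ is a \emph{minimizer} of $E_\v$ in $\J^{\db}_{\deg(u_\v,\dom)}$, hence a \emph{solution} of the Ginzburg--Landau system \eqref{3.8}--\eqref{3.8'}. Lemma~\ref{L3.deg} (which uses $\db\in(\N^*)^N$) then produces a boundary point where $u_\v\times\p_\tau u_\v>0$. The crucial ingredient is Lemma~\ref{L3.15soft}: applied to the solution $u_\v$, it builds a competitor in $\J^{\db}_{(\bfp,q)}$ with energy \emph{strictly} less than $E_\v(u_\v)+\pi\tae((\bfp,q),\deg(u_\v,\dom))$. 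Combined with the Price Lemma lower bound, this yields $m_\v(\bfp,q,\db)<m_\v(\bfp,q,\db)$ whenever $\deg(u_\v,\dom)\neq(\bfp,q)$, a contradiction. So the hard construction of section~\ref{3.sectionPREUVE} is \emph{not} used to build the upper-bound test map from the harmonic $u_0$; it is applied to an actual GL solution (exploiting its structure through the conjugate function $h$ and local phase $\theta$) to obtain the strict inequality $E_\v(\tilde u)<E_\v(u)+\pi$, which is precisely the margin needed to close the argument.
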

For further use, a configuration of degrees $({\bf p},q,\db)\in\Z^N\times\Z\times(\N^*)^N$ s.t. $p_i\leq d_i$ and $q\leq \sum d_i$ will be called a "good configuration".
Noting that, for $\db\neq\tilde{\db}\in\Z^N$ and $({\bf p},q)\in\Z^N\times\Z$, we have $\Jp\cap\J_{{\bf p},q}^{\tilde{\db}}=\emptyset$, we are led to
\\ {\bf Proof of Theorem \ref{T3.main}:} Let $({\bf p},q)\in\Z^N\times\Z$ and set $\text{for }k\in\N^*$,
\[
\d d=\max\left\{\max_i|p_i|,|q|\right\}\text{ and }\db_k=(d+k,...,d+k). 
\]
We apply Theorem \ref{T3.Existence} to the class $\J_{{\bf p},q}^{\db_k}$. We obtain the existence of 
\[
\v_{\ref{3.eps1}}({\bf p},q,M)=\min_{k\in\N_M}{\v_{\ref{3.eps2}}({\bf p},q,\db_k)}>0
\]
s.t. for $\v<\v_{\ref{3.eps1}}$, $k\in\N_M$, $m_\v({\bf p},q,\db_k)$ is achieved by $u^k_\v$.

Noting the continuity of the degree and of the {\it approximate bulk degree} for the strong $H^1$-convergence, there exists $V^k_\v\subset \J_{{\bf p},q}^{\db_k}\subset \J$ an open (for $H^1$-norm) neighbourhood of $u_\v^k$. It follows easily that
\[
E_\v(u^k_\v)=\min_{u\in V^k_\v}{E_\v(u)}.
\] 
Then $u^k_\v\in\J_{{\bf p},q}$ is a local minimizer of $E_\v$ in $\J$ (for $H^1$-norm) for $0<\v<\v_{\ref{3.eps1}}({\bf p},q,M)$.

\section{Basic facts of the Ginzburg-Landau theory}\label{3.BASIC}
It is well known (\emph{cf} \cite{BeMi1}, lemma 4.4 page 22) that the local minimizers of $E_\v$ in $\J_{\bfp,q}$ satisfy 
\begin{equation}\label{3.8}
-\Delta u=\frac{1}{\v^2}u(1-|u|^2)\text{ in }\dom,
\end{equation}
\begin{equation}\label{3.8'}
|u|=1\text{ and }u\times\p_\nu u=0\text{ on }\p \dom.
\end{equation}

Equation (\ref{3.8}) and the Dirichlet condition on the modulus in (\ref{3.8'}) are classical. The Neumann condition on the phase in \eqref{3.8'} is less standard but it is for example stated in \cite{BeMi1}.

Equation (\ref{3.8}) combined with the boundary condition on $\p \dom$ implies, \emph{via} a maximum principle, that 
\begin{equation}\label{3.9}
|u|\leq1\text{ in }\dom.
\end{equation}

One of the questions in the Ginzburg-Landau model is the location of the vortices of stable solutions (\emph{i.e.}, local minimizers of $E_\v$).  We will define \emph{ad hoc }a vortex as an isolated zero $x$ of $u$ with nonzero degree on small circles around $x$.
 
 The following result shows that, under energy bound assumptions on solutions of (\ref{3.8}), vortices are expelled to the boundary when $\v\to0$.
\begin{lem}\label{L3.2}\cite{M1}
Let $\Lambda>0$ and let $u$ be a solution of (\ref{3.8}) satisfying (\ref{3.9}) and the energy bound $E_{\v}(u)\leq\Lambda$. Then with $C,C_k$ and $\v_{\etiquette{3.eps}{3.epsMi}}$ depending only on $\Lambda$, $\dom$, we have, for $0<\v<\v_{\ref{3.epsMi}}$ and $x\in\dom$,
\begin{equation}\label{3.10}
1-|u(x)|^2\leq\frac{C\v^2}{\dist^2(x,\p \dom)}
\end{equation}
and
\begin{equation}\label{3.11}
|D^ku(x)|\leq\frac{C_k}{\dist^k(x,\p \dom)}.
\end{equation}
\end{lem}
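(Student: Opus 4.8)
The plan is to localise, rescale to the unit ball, exploit that for small $\v$ the solution carries no vortices, and then close with a maximum principle; the estimate is, in spirit, the one in \cite{M1}. First I would record that the assertion is local and scale-covariant. Fix $x\in\dom$, set $d=\dist(x,\p\dom)$ so that $B(x,d)\subset\dom$, and dispose of the easy regime $d\le C_0\v$: there \eqref{3.10} holds because its right side is $\ge 1\ge 1-|u|^2$, and \eqref{3.11} follows from interior elliptic estimates applied to $v(y):=u(x+\tfrac d2 y)$ on $B(0,1)$, which solves $-\Delta v=\tfrac{d^2}{4\v^2}v(1-|v|^2)$ with bounded coefficient and $|v|\le1$. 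So I may assume $d\ge C_0\v$ and put $\mu:=\v/d$. Rescaling $v(y):=u(x+dy)$ on $B(0,1)$ gives a solution of $-\Delta v=\mu^{-2}v(1-|v|^2)$ with $|v|\le1$ and, by scale invariance of the $2$D Dirichlet integral together with the matching scaling of the potential term, $E_\mu(v,B(0,1))\le E_\v(u,B(x,d))\le\Lambda$. It then suffices to prove $1-|v(0)|^2\le C\mu^2$ and $|D^kv(0)|\le C_k$ with $C,C_k$ depending only on $\Lambda$, since undoing the scaling turns these into \eqref{3.10} and \eqref{3.11}.

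Next I would use that a vortex costs energy $\gtrsim\pi|\log\mu|\to\infty$, so that for $\mu$ small the clearing-out mechanism of \cite{BBH} (Ginzburg--Landau energy $\le\eta_0|\log\mu|$ on a ball forces the modulus $\ge\tfrac12$ on the half-ball) gives $|v|\ge\tfrac12$, i.e. $0\le\xi:=1-|v|^2\le\tfrac34$, on $B(0,\tfrac34)$. On this set I write $v=\rho\,e^{\imath\phi}$, so that $\Div(\rho^2\n\phi)=0$, $-\Delta\rho+\rho|\n\phi|^2=\rho(1-\rho^2)/\mu^2$, and, from the equation for $|v|^2$,
\[
-\Delta\xi+\frac{2|v|^2}{\mu^2}\,\xi=2|\n v|^2\qquad\text{on }B(0,\tfrac34).
\]
Combining Meyers higher integrability for the divergence-form phase equation, a $W^{2,p}$/Schauder bootstrap for $\rho$ and $\phi$, and this identity, one reaches the $\mu$-uniform bound $|\n v|\le C$ on $B(0,\tfrac12)$. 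Given that, on $B(0,\tfrac12)$ one has $-\mu^2\Delta\xi+\tfrac12\xi\le 2\mu^2|\n v|^2\le C\mu^2$, and comparing $\xi$ with a radial supersolution of the form $C\mu^2+B\,I_0\big(r/(\mu\sqrt{2})\big)$, where $B>0$ is chosen so that the supersolution is $\ge1\ge\xi$ on $\p B(0,\tfrac12)$ (hence $B$ is exponentially small in $1/\mu$), yields $\xi(0)\le C'\mu^2$.

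Finally, with $|\n v|\le C$ and $\xi\le C\mu^2$ in hand, $|\Delta v|=\mu^{-2}|v|\xi\le C$ on $B(0,\tfrac12)$, so Schauder gives $|D^2v|\le C$; differentiating the equation and iterating (using that $\xi$ and its derivatives decay, via the equation for $\xi$) produces $|D^kv(0)|\le C_k$, and one fixes $\v_{\ref{3.epsMi}}$ small enough that every ``$\mu$ small'' requirement holds for $d\ge C_0\v$. The hard part will be the step hidden in ``one reaches the $\mu$-uniform bound $|\n v|\le C$'': a priori there is a circularity between $\n v$ being bounded and $\xi$ being $O(\mu^2)$ — the modulus equation carries a $\mu^{-2}$ forcing until one already knows $\xi=O(\mu^2)$ — and breaking it, by exploiting the elliptic structure of the decoupled $(\rho,\phi)$ system together with the absence of vortices, is precisely the technical core of the corresponding estimates in \cite{BBH} and \cite{M1}.
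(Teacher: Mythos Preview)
The paper does not prove this lemma: it is quoted verbatim from \cite{M1} and used as a black box, so there is no ``paper's own proof'' to compare against. Your outline is a faithful reconstruction of the standard argument behind that citation --- localise, rescale to $B(0,1)$, invoke $\eta$-compactness to force $|v|\ge\tfrac12$, obtain a $\mu$-uniform gradient bound, and close with the maximum principle on $\xi=1-|v|^2$ followed by elliptic bootstrap --- and you have correctly flagged the one genuinely delicate step.

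One remark on that step: the route you propose for the $\mu$-uniform bound $|\nabla v|\le C$ (Meyers for the phase equation, then $W^{2,p}$/Schauder for $\rho$ and $\phi$) does not by itself break the circularity you describe, because Meyers only upgrades $\nabla\phi$ from $L^2$ to $L^{2+\delta}$, and Schauder on the $\rho$-equation still sees the factor $\mu^{-2}$ until you know $\xi=O(\mu^2)$. In \cite{BBH} and \cite{M1} the loop is cut differently: once $|v|\ge\tfrac12$ one runs a blow-up argument (rescale by the putatively large gradient and pass to a finite-energy entire solution of either the Ginzburg--Landau equation or the harmonic map equation on $\R^2$, then invoke the corresponding Liouville theorem) or, equivalently, a Pohozaev-type monotonicity identity, to obtain $|\nabla v|\le C$ \emph{before} any information on $\xi$ beyond $\xi\le\tfrac34$. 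With that in hand, your maximum-principle comparison for $\xi$ and the subsequent bootstrap go through exactly as you wrote.
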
 

When $u$ is smooth in $\dom$ and $\rho=|u|>0$, the map $\d \frac{u}{\rho}$ admits a lifting $\theta$ , \emph{i.e}, we may write
\[
u=\rho\e^{\imath\theta},
\]where $\theta$ is a smooth (and locally defined) real function on $\dom$ and $\n\theta$ is a globally defined smooth vector field.

Using (\ref{3.8}) and (\ref{3.8'}), we have 
\begin{equation}\label{3.40}
\left\{\begin{array}{cl}
\Div(\rho^2\n\theta)=0&\text{in }B
\\
\p_\nu\theta=0&\text{on }\p \dom
\end{array}\right.,
\end{equation}
\begin{equation}\label{3.41}
\left\{\begin{array}{cl}
\d-\Delta\rho+|\n\theta|^2\rho+\frac{1}{\v^2}\rho(\rho^2-1)=0&\text{in }B
\\
\rho=1&\text{on }\p \dom
\end{array}\right.,
\end{equation}
here, $B=\{x\in\dom\,|\,u(x)\neq0\}$.

We will need later the following. 
\begin{lem}\label{L3.16} \cite{BeRy1}
Let $u$ be a solution of (\ref{3.8}) and (\ref{3.8'}). Let $G\subset \dom$ be an open Lipschitz set s.t. $u$ does not vanish in $\overline{G}$. Write, in $\overline{G}$, $u=\rho v$ with $\rho=|u|$. Let $w\in H^1(G,\C)$ be s.t. $|\tr_{\p G}w|\equiv1$. Then 
\[
E_\v(\rho w,G)=E_\v(u,G)+L_\v(w,G),
\]
with
\[
L_\v(w,G)=\frac{1}{2}\int_{G}{\rho^2|\n w|^2\di x}-\frac{1}{2}\int_{G}{|w|^2\rho^2|\n v|^2\di x}+\frac{1}{4\v^2}\int_{G}{\rho^4(1-|w|^2)^2\di x}.
\]
\end{lem}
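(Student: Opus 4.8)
The plan is to prove Lemma~\ref{L3.16} by a direct substitution argument, writing everything in terms of the decomposition $u=\rho v$ with $\rho=|u|$ and $|v|\equiv1$ in $\overline G$, and expanding the energy of the competitor $\rho w$. First I would record the pointwise identity for the gradient of a product: $\n(\rho w)=w\,\n\rho+\rho\,\n w$, so that
\[
|\n(\rho w)|^2=|\n\rho|^2|w|^2+\rho^2|\n w|^2+2\rho\,(w\,\n\rho)\cdot\n w.
\]
The same identity applied to $u=\rho v$ gives $|\n u|^2=|\n\rho|^2+\rho^2|\n v|^2+2\rho(v\,\n\rho)\cdot\n v$, and since $|v|\equiv1$ we have $v\cdot\n v\equiv0$ (differentiate $|v|^2=1$), so the cross term vanishes and $|\n u|^2=|\n\rho|^2+\rho^2|\n v|^2$; this is the standard Ginzburg--Landau splitting. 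The main work is to handle the cross term $2\rho(w\,\n\rho)\cdot\n w$ in the competitor's energy and show its integral over $G$ vanishes, reducing $E_\v(\rho w,G)-E_\v(u,G)$ to exactly $L_\v(w,G)$ once the potential terms are compared.

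Second, I would deal with the potential terms, which is immediate: $\tfrac{1}{4\v^2}\int_G(1-|\rho w|^2)^2=\tfrac{1}{4\v^2}\int_G(1-\rho^2|w|^2)^2$, and subtracting the corresponding term for $u$, namely $\tfrac{1}{4\v^2}\int_G(1-\rho^2)^2$, a short algebraic manipulation — factoring $(1-\rho^2|w|^2)^2-(1-\rho^2)^2$ and regrouping — yields $\tfrac{1}{4\v^2}\int_G\rho^4(1-|w|^2)^2$ plus a term $\tfrac{1}{2\v^2}\int_G\rho^2(\rho^2-1)(|w|^2-1)$; one must then check this last term cancels against a contribution coming from the gradient cross term after integrating by parts, which is where the PDE~\eqref{3.41} enters. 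So the third and central step is: integrate the cross term $\int_G\rho(w\,\n\rho)\cdot\n w$ by parts. Writing $w\,\n w=\tfrac12\n(|w|^2)+(\text{something involving }w\times\n w)$, actually more cleanly I would use $(w\,\n\rho)\cdot\n w=\n\rho\cdot(w\cdot\n w)=\tfrac12\n\rho\cdot\n(|w|^2)$, so
\[
\int_G\rho\,(w\,\n\rho)\cdot\n w=\tfrac12\int_G\rho\,\n\rho\cdot\n(|w|^2)=\tfrac14\int_G\n(\rho^2)\cdot\n(|w|^2).
\]
Integrating by parts: $\tfrac14\int_G\n(\rho^2)\cdot\n(|w|^2)=-\tfrac14\int_G\Delta(\rho^2)\,(|w|^2-1)+\tfrac14\int_{\p G}\p_\nu(\rho^2)(|w|^2-1)$ (I subtract the constant $1$ freely since $\n(|w|^2-1)=\n|w|^2$). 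Because $|\tr_{\p G}w|\equiv1$, the boundary term vanishes. Then I substitute $\Delta(\rho^2)=2|\n\rho|^2+2\rho\Delta\rho$ and use~\eqref{3.41}, i.e.\ $\Delta\rho=|\n\theta|^2\rho+\tfrac{1}{\v^2}\rho(\rho^2-1)$ where $|\n\theta|^2=|\n v|^2$ on $G$ (since $v=e^{\imath\theta}$ there, $|\n v|^2=|\n\theta|^2$), obtaining
\[
\Delta(\rho^2)=2|\n\rho|^2+2\rho^2|\n v|^2+\tfrac{2}{\v^2}\rho^2(\rho^2-1).
\]
Feeding this back shows that the gradient cross term produces precisely $-\tfrac12\int_G(|\n\rho|^2+\rho^2|\n v|^2)(|w|^2-1)-\tfrac{1}{2\v^2}\int_G\rho^2(\rho^2-1)(|w|^2-1)$, whose last piece kills the leftover potential term from step two and whose first piece, combined with the $|\n\rho|^2|w|^2$ and $|\n u|^2$ bookkeeping, assembles into the stated $L_\v(w,G)=\tfrac12\int_G\rho^2|\n w|^2-\tfrac12\int_G|w|^2\rho^2|\n v|^2+\tfrac1{4\v^2}\int_G\rho^4(1-|w|^2)^2$.

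The step I expect to be the main obstacle is the integration by parts and the use of the Euler--Lagrange equation~\eqref{3.41}: one must be careful that $G$ is only Lipschitz, so the boundary integral $\int_{\p G}\p_\nu(\rho^2)(|\tr w|^2-1)$ makes sense and genuinely vanishes — here the hypotheses $u$ does not vanish on $\overline G$ (so $\rho\ge c>0$ and $\rho^2\in H^1\cap C^1_{loc}$, giving enough regularity by Lemma~\ref{L3.2}-type elliptic estimates, or simply by elliptic regularity away from zeros) and $|\tr_{\p G}w|\equiv1$ are exactly what is needed. A secondary subtlety is justifying $|\n v|^2=|\n\theta|^2$ globally on $G$: $v=u/\rho$ has modulus one but a lifting $\theta$ is only locally defined, however $|\n\theta|^2$ is globally well defined (as noted in the paragraph preceding~\eqref{3.40}), and locally $v=e^{\imath\theta}$ gives $\n v=\imath e^{\imath\theta}\n\theta$, hence $|\n v|^2=|\n\theta|^2$; this glues consistently. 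Everything else is routine algebra with the potential and the product rule, so the proof is short modulo these regularity checks.
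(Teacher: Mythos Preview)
The paper does not supply a proof of Lemma~\ref{L3.16}; it simply cites \cite{BeRy1}. Your argument is the standard one and is correct: expand $|\n(\rho w)|^2$, identify the cross term as $\tfrac14\n(\rho^2)\cdot\n|w|^2$, integrate by parts (the boundary term vanishes since $|\tr_{\p G}w|=1$), and substitute $\Delta(\rho^2)=2|\n\rho|^2+2\rho^2|\n v|^2+\tfrac{2}{\v^2}\rho^2(\rho^2-1)$, which follows from \eqref{3.41}; the pieces then assemble exactly into $L_\v(w,G)$ after the algebraic identity $(1-\rho^2|w|^2)^2-(1-\rho^2)^2=\rho^4(1-|w|^2)^2+2\rho^2(\rho^2-1)(|w|^2-1)$.

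One small regularity point worth making explicit: since $u$ solves \eqref{3.8} it is smooth in $\dom$, and since $|u|\ge c>0$ on $\overline G$ the function $\rho^2$ is smooth up to $\p G$; hence $\p_\nu(\rho^2)\in C^0(\p G)$ and the boundary integral is well defined on a Lipschitz $\p G$. The only place where the bare hypothesis $w\in H^1(G,\C)$ is borderline is that $|w|^2$ need not be in $H^1$ in two dimensions, so the integration by parts should, strictly speaking, be justified by approximation (truncate $w$ so that $|w|\le M$, pass to the limit using $H^1\hookrightarrow L^p$ for all $p<\infty$) or one simply adds a standing assumption $w\in L^\infty$, which is anyway satisfied in the only application of the lemma (cf.\ Remark~\ref{Remark3.BorneLinfty}). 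This is a cosmetic issue, not a gap in the argument.
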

For further use, we note that we may write, locally in $\overline{G}$, $u=\rho\e^{\imath\theta}$, so that $v=\e^{\imath\theta}$. It turns out that $\n\theta$ is smooth and globally defined in $\overline{G}$. In terms of $\n\theta$, we may rewrite
\[
L_\v(w,G)=\frac{1}{2}\int_{G}{\rho^2|\n w|^2\di x}-\frac{1}{2}\int_{G}{|w|^2\rho^2|\n \theta|^2\di x}+\frac{1}{4\v^2}\int_{G}{\rho^4(1-|w|^2)^2\di x}.
\]
For $u$ a solution of  (\ref{3.8}) and (\ref{3.8'}), we can consider (see Lemma 7 in \cite{BeRy1}) $h$ the unique globally defined solution of 
\begin{equation}\label{3.42}
\left\{\begin{array}{cl}\n^\bot h=u\times\n u&\text{in }\dom\\h=1&\text{on }\p \O\\h=k_i&\text{on }\p\omega_i\end{array}\right.,
\end{equation}
where $k_i$'s are real constants uniquely defined by the first two equations in (\ref{3.42}). Here
\[
\n^\bot h=\left(\begin{array}{c}-\p_2h\\\p_1h\end{array}\right)\text{ is the orthogonal gradient of }h\text{ and }u\times\n u=\left(\begin{array}{c}u\times\p_1 u\\u\times\p_2 u\end{array}\right).
\]

It is easy to show that
\begin{equation}\label{3.43}
\left\{\begin{array}{cl}\n h=-\rho^2\n^\bot\theta&\text{in }B\\\d\Div(\frac{1}{\rho^2}\n h)=0&\text{in }B\\\Delta h=2\p_1u\,\times\,\p_2u&\text{in }B\end{array}\right.;
\end{equation}
here, $B=\{x\in\dom\,|\,u(x)\neq0\}$.

In \cite{BBH}, Bethuel, Brezis and Hélein consider the minimization of $\d E(u)=\frac{1}{2}\int_\dom{|\n u|^2\di x}$, the Dirichlet functional, in the class 
\[
 E_{\db }=\{u\in H^1(\dom,\mathbb{S}^1)\,|\,\deg(u,\dom)=(\db,d)\};
\]
here, $d=\sum d_k$.% \footnote{Un résultat classique affirme que $E_{d_1,d_2,d}\neq\emptyset\Leftrightarrow d_1+d_2=d$.}

Theorem I.1 in \cite{BBH} gives the existence of a unique solution (up to multiplication by an $\mathbb{S}^1$-constant) for the minimization of $E$ in $E_{\db}$. We denote $u_0$ this solution. This $u_0$ is also a solution of
\[
\left\{\begin{array}{cl}
  -\Delta v  =v|\n v|^2 &\text{in } \dom   \\
     \d v\times\p_\nu v=0&\text{on }\p \dom   \\
\end{array}\right..
\]
Moreover, we have
\begin{equation}\label{3.17}
I_0(\db ,\dom):=\min_{u\in E_{\db }}{E(u)}=\frac{1}{2}\int_\dom{|\n h_0|^2\di x}
\end{equation}
with $h_0$ the unique solution of
\begin{equation}\label{3.18}
\left\{\begin{array}{cl}\Delta h_0=0&\text{in }\dom\\h_0=1&\text{on }\p\O\\h_0=\text{Cst}_k&\text{on }\p\omega_k,\,k\in\{1,...,N\}\\\d\int_{\p\omega_k}{\p_\nu h_0\di \sigma}=2\pi d_{k}&\text{for }k\in\{1,...,N\}\end{array}\right..
\end{equation}
One may prove that $h_0$ is the (globally defined) harmonic conjugate of a local lifting of $u_0$.

\section{Energy needed to change degrees}
We denote %suivant le contexte
%\[
%\begin{array}{cccc}\tae:&\Z^N\times\Z^N&\rightarrow&\N
%\\
%&(\db,{\bf p})&\mapsto&\textsf{\ae}(\db,{\bf p})=\sum_{i=1}^N{|d_i-p_i|}
%\end{array}
%\]
%ou
\[
\begin{array}{cccc}\tae:&(\Z^N\times\Z)\times(\Z^N\times\Z)&\rightarrow&\N
\\
&\left((\db,d),({\bf p},q)\right)&\mapsto&\sum_{i=1}^N{|d_i-p_i|}+|d-q|
\end{array}.
\]
The next result quantifies the energy needed to change degrees in the weak limit.%expresses the minimal loss of energy during a weak convergence changing of degrees class. 
\begin{lem}(\label{L3.6}\cite{BeMi1}, Lemma 1)
Let $(u_n)_n\subset\J_{\bfp,q}$ be a sequence weakly converging in $H^1$ to $u$. Then
\begin{equation}\label{3.33}
\liminf_n{E(u_n)}\geq E(u)+\pi\tae(\deg(u,\dom),({\bf p},q))
\end{equation}
and for $\v>0$%Using Sobolev inequalities we have from (\ref{3.33}) 
\begin{equation}\label{3.34}
\liminf_n{E_\v(u_n)}\geq E_\v(u)+\pi\tae(\deg(u,\dom),({\bf p},q)).
\end{equation}
\end{lem}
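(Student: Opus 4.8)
The plan is to reduce the weak-limit statement to a purely local analysis near $\p\dom$, where the prescribed degrees "live", and then invoke a degree-counting argument on annular neighbourhoods of each boundary component. First I would fix, for each component $\gamma\in\{\p\o_1,\dots,\p\o_N,\p\O\}$, a thin Lipschitz annular collar $A_\gamma\subset\dom$ with one boundary curve equal to $\gamma$ and the other a nearby smooth curve $\gamma'$ interior to $\dom$; shrinking the collars if necessary we may assume they are pairwise disjoint. The point of passing to collars is that the degree $\deg_\gamma(u_n)$ depends only on $u_n$ restricted to $A_\gamma$ (it equals the winding of $u_n$ on $\gamma$, and since $u_n\in H^1$, on a.e.\ parallel curve $\gamma_t$ foliating $A_\gamma$ the winding is well defined and, being an integer-valued continuous function of $t$ when $|u_n|$ stays away from $0$, is constant in $t$). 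The contributions to $E(u_n)$ from $\dom\setminus\bigcup_\gamma A_\gamma$ are handled by weak lower semicontinuity of the Dirichlet energy alone, so it suffices to show, for each $\gamma$,
\[
\liminf_n E(u_n,A_\gamma)\ \geq\ E(u,A_\gamma)+\pi\,|{\deg_\gamma(u)}-{\deg_\gamma(u_n)}|
\]
(the right-most quantity being eventually constant in $n$), and likewise with $E_\v$ in place of $E$; summing over $\gamma$ then gives \eqref{3.33} and \eqref{3.34}, since the potential term is nonnegative and $\tae$ is exactly the sum over components of these degree gaps — here one uses $\deg_{\p\O}(u_n)=\sum_i\deg_{\p\o_i}(u_n)+\big(\text{something vanishing}\big)$ only implicitly; more honestly, $\tae$ as defined sums $|d_i-p_i|$ over holes plus $|d-q|$ on the outer boundary, which is precisely the sum of the per-component gaps once we note $d=\sum d_i$ and $q$ is the outer degree of the $u_n$'s — wait, that identity is not automatic, so I would instead treat all $N+1$ components symmetrically and let $\tae$ be read off directly as $\sum_\gamma|\deg_\gamma(u)-\deg_\gamma(u_n)|$ after checking this matches the displayed formula.

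The per-collar estimate is the technical heart. On a fixed annulus $A=A_\gamma$ with $u_n\faible u$ in $H^1(A)$, $|u_n|=1$ on $\gamma$, and $\deg_\gamma(u_n)=k_n$, $\deg_\gamma(u)=k$, I would argue as follows. By the trace theorem and Rellich, $u_n\to u$ strongly in $L^2(A)$ and in $H^{1/2}(\gamma')$ for a.e.\ choice of interior curve $\gamma'$ in the foliation (Fubini in the collar coordinate); fix such a $\gamma'$. The degree of $u$ on $\gamma'$ equals $k$ (degree is constant across the foliation for $u$ wherever $|u|$ is controlled — one may need to perturb $\gamma'$ to avoid the bad set, using that $|u|$ can be small only on a set of small measure). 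Now compare the energies of $u_n$ and $u$ on the sub-annulus $A'$ between $\gamma$ and $\gamma'$: on $A'$, $u_n$ and $u$ have boundary windings $(k_n,?)$ and $(k,?)$ respectively, and the essential mechanism is that changing the winding number between the two boundary circles of an annulus by an integer $m$ costs at least $\pi|m|$ of Dirichlet energy in the limit — this is the classical lower bound (e.g.\ the degree-difference / "bad discs" argument, or a direct computation writing $u_n=|u_n|e^{\imath\varphi_n}$ away from zeros and estimating $\int|\n\varphi_n|^2$ on circles via Jensen, $\int_{S^1}|\p_\tau\varphi_n|^2\geq (2\pi\cdot\text{winding})^2$, then integrating in the radial variable). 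Combined with weak lower semicontinuity for the part of $u_n$ "matching" $u$, one gets $\liminf_n E(u_n,A')\geq E(u,A')+\pi|k-k_n|$, and since $E(u_n,A\setminus A')\geq 0$ and $E(u,A')\leq E(u,A)$, the collar estimate follows. The $E_\v$ version is identical because the potential term is nonnegative: $E_\v(u_n,A')\geq E(u_n,A')$ on the left and one keeps the full $E_\v(u,A')$ on the right, the extra potential energy of $u$ being a bonus.

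The main obstacle I anticipate is making the "degree across the annulus is locally constant" and the "bad set has small measure" steps rigorous at the $H^1$ regularity level, where $u$ and the $u_n$ need not be continuous and may have zeros. The clean way around this is not to reprove the annular lower bound from scratch but to \emph{cite} it: this is exactly the content of the quoted Lemma 1 of \cite{BeMi1}, and in the present paper the honest move is simply to state Lemma \ref{L3.6} as a reference result, perhaps with a one-line indication that \eqref{3.34} follows from \eqref{3.33} by discarding the nonnegative potential term on the left and retaining it on the right — i.e.\ apply \eqref{3.33}-type reasoning to the Dirichlet parts and note $E_\v(u_n)\geq E(u_n)$, $E_\v(u)=E(u)+\frac{1}{4\v^2}\int(1-|u|^2)^2$. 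So in practice I would keep the proof short: record that it is \cite{BeMi1}, Lemma 1, and only spell out the trivial passage from the Dirichlet estimate \eqref{3.33} to the Ginzburg–Landau estimate \eqref{3.34}.
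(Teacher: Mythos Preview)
The paper gives no proof of this lemma at all: it is stated with the attribution ``\cite{BeMi1}, Lemma 1'' in the header and used as a black box. Your final resolution --- simply cite \cite{BeMi1} --- is therefore exactly what the paper does, and is the right call.

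Two remarks on the surrounding discussion. First, the collar/annular sketch you give beforehand is not really the mechanism behind the result in \cite{BeMi1}; the $\pi\cdot(\text{degree gap})$ comes from boundary concentration of the Jacobian (a Price-lemma/bubbling argument), not from a Jensen estimate on circular slices, which would produce a logarithmic quantity rather than the sharp $\pi$. Since you end up citing anyway this is harmless, but I would not leave the heuristic in the text as written.

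Second, your passage from \eqref{3.33} to \eqref{3.34} has a small gap. Writing $E_\v(u_n)\geq E(u_n)$ throws away the potential term of $u_n$ entirely, so you land on $\liminf E_\v(u_n)\geq E(u)+\pi\tae$, not $E_\v(u)+\pi\tae$. The missing ingredient is the compact embedding $H^1\hookrightarrow L^4$ in two dimensions: $u_n\to u$ strongly in $L^4$, hence $\frac{1}{4\v^2}\int(1-|u_n|^2)^2\to\frac{1}{4\v^2}\int(1-|u|^2)^2$, and then
\[
\liminf_n E_\v(u_n)\ \geq\ \liminf_n E(u_n)+\lim_n\frac{1}{4\v^2}\int(1-|u_n|^2)^2\ \geq\ E(u)+\pi\tae+\frac{1}{4\v^2}\int(1-|u|^2)^2\ =\ E_\v(u)+\pi\tae.
\]
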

The next lemma is proved in \cite{BeRy1}.
\begin{lem}\label{L3.5}
Let $\db=(d_1,...,d_N),{\bf p}=(p_1,...,p_N)\in\Z^N,\,q\in\Z$. There is $o_\v(1)\underset{\v\to0}{\to}0$ (depending of $({\bf p},q,\db)$) s.t. for $u\in  \J_{{\bf p},q}^\db$ we have
\begin{equation}\label{3.19}
E_\v(u)\geq I_0(\db ,\dom)+\pi\textsf{\ae}((\db,d),({\bf p},q))-o_\v(1).
\end{equation}
Here, $d:=\sum d_i$.
\end{lem}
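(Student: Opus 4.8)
The idea is to relate a map $u\in\J_{{\bf p},q}^\db$ to the minimization problem defining $I_0(\db,\dom)$ by peeling off the modulus and extracting a limit. The plan is the following. First I would argue by contradiction: suppose the conclusion fails, so there is a fixed "deficit" $\delta>0$ and a sequence $\v_n\downarrow0$ together with $u_n\in\J_{{\bf p},q}^\db$ such that
\[
E_{\v_n}(u_n)\leq I_0(\db,\dom)+\pi\,\tae((\db,d),({\bf p},q))-\delta
\]
for all $n$. In particular $E_{\v_n}(u_n)$ is bounded by a constant $\Lambda$ independent of $n$, so up to a subsequence $u_n\faible u_*$ weakly in $H^1(\dom,\C)$, with $|u_*|=1$ on $\p\dom$, hence $u_*\in\J$; let $({\bf p}',q')=\deg(u_*,\dom)$. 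The potential term forces $\int_\dom(1-|u_n|^2)^2\to0$, so $|u_*|\equiv1$ a.e. and $u_*\in H^1(\dom,\mathbb{S}^1)$.

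Next I would identify the degrees of $u_*$. By Proposition \ref{P3.1}, item 1, $\ab(u_*,\dom)=\deg_{\p\omega_i}(u_*)={\bf p}'$; on the other hand, item 3 gives $\dist(\ab(u_n),\Z^N)\to0$, and item 2 (the Lipschitz estimate in $L^2$, together with $u_n\to u_*$ strongly in $L^2$ by Rellich and the uniform energy bound) gives $\ab(u_n)\to\ab(u_*)={\bf p}'$. Since $u_n\in\J_{{\bf p},q}^\db$ means $\|\ab(u_n)-\db\|_\infty\leq 1/3$, passing to the limit yields $\|{\bf p}'-\db\|_\infty\leq 1/3$, and as $\db,{\bf p}'\in\Z^N$ this forces ${\bf p}'=\db$. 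Consequently $q'=\deg_{\p\Omega}(u_*)=\sum_i\deg_{\p\omega_i}(u_*)=\sum_i d_i=d$ (the total degree on the outer boundary of an $H^1(\dom,\mathbb{S}^1)$ map equals the sum of the inner degrees), so $u_*\in E_\db$ and hence $E(u_*)\geq I_0(\db,\dom)$.

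Finally I would combine the energy inequalities. Lemma \ref{L3.6} applied to $(u_n)\subset\J_{{\bf p},q}$ converging weakly to $u_*$ gives
\[
\liminf_n E_{\v_n}(u_n)\geq \liminf_n\Big(\tfrac12\int_\dom|\n u_n|^2\Big)\geq E(u_*)+\pi\,\tae(\deg(u_*,\dom),({\bf p},q)),
\]
where I also used that the potential part of $E_{\v_n}(u_n)$ is nonnegative. Since $\deg(u_*,\dom)=(\db,d)$, the right-hand side is $\geq I_0(\db,\dom)+\pi\,\tae((\db,d),({\bf p},q))$, contradicting the strict deficit $\delta$ above. This yields the claim, with $o_\v(1)$ defined as the supremum of the deficits over $\Jp$ at scale $\v$ (which by the argument tends to $0$; note it may be negative, which is harmless). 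The main obstacle is the degree identification step: one must be careful that weak $H^1$ convergence does \emph{not} preserve degrees in general (as the M\"obius example in Section~2 shows), so the equality ${\bf p}'=\db$ genuinely relies on the approximate bulk degree being weakly continuous and integer-valued in the limit — this is exactly what Proposition \ref{P3.1} buys us, and it is the only place the hypothesis $u\in\J_{{\bf p},q}^\db$ (rather than merely $\J_{{\bf p},q}$) is used.
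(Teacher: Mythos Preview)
Your proof is correct and follows essentially the same route as the paper's: a contradiction argument, weak $H^1$ compactness, identification of the limit as an element of $E_\db$ via Proposition~\ref{P3.1} (your itemized use of parts 1--3 is exactly how the paper's one-line ``$u\in H^1(\dom,\mathbb{S}^1)\cap\J_{\db,d}^\db=E_\db$'' unpacks), and then Lemma~\ref{L3.6} for the energy drop. The paper invokes $L^4$ convergence to get $|u_*|=1$ a.e.\ in one stroke, whereas you phrase it via the potential term; both are fine in 2D by Rellich.
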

We present below a simpler proof than the original one in \cite{BeRy1}. 

\begin{proof}
Let $({\bf p},q,{\bf d})\in\Z^N\times\Z\times\Z^N$. We argue by contradiction and we suppose that there are $\delta>0$, $\v_n\downarrow0$ and $(u_n)_n\subset\mathcal{J}_{{\bf p},q}^\db$ s.t.
\begin{equation}\label{3.bornepreuve}
E_{\v_n}(u_n)\leq I_0(\db ,\dom)+\pi\textsf{\ae}((\db,d),({\bf p},q))-\delta.
\end{equation}
Since $(u_n)_n$ is bounded in $H^1$, there is some $u$ s.t., up to subsequence, $u_n\faible u$ in $H^1$ and $u_n\rightarrow u$ in $L^4$. Using the strong convergence in $L^4$, (\ref{3.bornepreuve}) and Proposition \ref{P3.1}, we have $u\in H^1(\dom,\mathbb{S}^1)\cap\mathcal{J}_{\db,d}^\db=E_{\bf d}$.

To conclude, we use \eqref{3.bornepreuve} combined with Lemma \ref{L3.6}
\begin{eqnarray*}
I_0(\db ,\dom)+\pi\textsf{\ae}((\db,d),({\bf p},q))-\delta&\geq& \liminf_n E_{\v_n}(u_n)
\\
&\geq&\liminf_n E(u_n)
\\
&\geq&E(u)+\pi\tae((\db,d),({\bf p},q))
\\
&\geq&I_0(\db,\dom)+\pi\tae((\db,d),({\bf p},q))%\text{ this is a contradiction}.
\end{eqnarray*}
which is a contradiction.
\end{proof}
%For $i\in\{0,...,N\}$, we denote ${\bf e}_i=\left(\left(\delta_{i,1},...,\delta_{i,N}\right),\delta_{i,0}\right)$ where $\delta_{i,k}=\left\{\begin{array}{cl}1&\text{if }i=k, \\0&\text{ otherwise}\end{array}\right.$. It is the Kronecker symbol.
One may easily proved (see Lemma \ref{L3.bougedegfinal} in Appendix \ref{3.bougerledeg}) that for $\eta>0$, $i\in\{0,...,N\}$ and $u\in\J_{\deg(u,\dom)}$, there are $v_\pm\in\J_{\deg(u,\dom)\pm{\bf e}_i}$ s.t.
\[
E_\v(v_\pm)\leq E_\v(u)+\pi+\eta.
\]
The key ingredient is a sharper result which holds under two additional hypotheses. In order to unify the notations, we use the notation $\omega_0$ for $\O$. We may now state the main ingredient in the proof of Theorem \ref{T3.Existence}.
\begin{lem}\label{L3.15soft}
Let $u\in \J_{{\bf p},q}$ be a solution of (\ref{3.8}), (\ref{3.8'}). 

Assume that 
\begin{equation}\label{3.deg}
\ab_j(u)\in(d_j-\frac{1}{3},d_j+\frac{1}{3}),\:\forall\,j\in\N_N.
\end{equation}
Let $i\in\{0,...,N\}$ and assume that there is some point $x^i\in\p\omega_i$ s.t. $ u\times\p_\tau u(x^i)>0$. Recall that $\tau$ is the direct tangent vector to $\p\o_i$.

Then there is $\tilde{u}\in\J_{({\bf p},q)-{\bf e}_i}$ s.t. 
\begin{equation}\no
E_\v(\tilde{u})<E_\v(u)+\pi
\end{equation}
and
\[
\ab_j(\tilde{u})\in(d_j-\frac{1}{3},d_j+\frac{1}{3}),\,\forall j\in\N_N.
\]
\end{lem}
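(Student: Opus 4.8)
The plan is to perform a \emph{local} modification of $u$ near the boundary point $x^i\in\p\omega_i$ where $u\times\p_\tau u(x^i)>0$, changing exactly one unit of degree on $\p\omega_i$ while keeping the energy increase strictly below $\pi$ and keeping the approximate bulk degree inside the interval $(d_j-\tfrac13,d_j+\tfrac13)$ for every $j$. First I would use Lemma \ref{L3.2}: since $u$ solves (\ref{3.8}), (\ref{3.8'}) and has bounded energy, $|u|$ is close to $1$ away from $\p\dom$ and all derivatives are controlled; combined with $|u|=1$ on $\p\dom$ and the hypothesis $u\times\p_\tau u(x^i)>0$, this forces $u$ to be non-vanishing in a half-disc $G=D(x^i,r)\cap\dom$ for suitable small $r$ (independent of $\v$ for $\v$ small), and moreover the phase $\theta$ of $u$ is, on the boundary arc $\p\omega_i\cap\overline{G}$, strictly increasing in the $\tau$-direction with a quantitatively positive lower bound on $\p_\tau\theta$.

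Next, on this good set $G$ I would write $u=\rho v$ with $\rho=|u|>0$ and apply Lemma \ref{L3.16}: replacing $u$ by $\rho w$ on $G$ (and leaving $u$ unchanged on $\dom\setminus G$) changes the energy by $L_\v(w,G)$, so the whole problem reduces to choosing $w\in H^1(G,\C)$ with $|\tr_{\p G}w|\equiv1$ that (a) subtracts one unit of degree from $\p\omega_i$ — i.e. glues continuously to $v$ on $\p G\setminus\p\omega_i$ while winding one fewer time along $\p\omega_i\cap\overline G$ — and (b) makes $L_\v(w,G)<\pi$. The natural candidate is a standard "vortex removal / degree-shifting" profile supported in a small disc touching $\p\omega_i$ at $x^i$: a map that looks like $\e^{\imath\theta}\cdot\phi$, where $\phi$ kills one winding at the cost of an energy of essentially $\pi$ coming from a single half-vortex sitting on the boundary, minus a genuinely negative contribution. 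The strict inequality $E_\v(\tilde u)<E_\v(u)+\pi$ should come from the middle term $-\tfrac12\int_G|w|^2\rho^2|\n\theta|^2$ in $L_\v$: because $\p_\tau\theta>0$ is bounded below on the relevant arc, the "phase energy" that gets cancelled when we replace the winding part of $v$ is strictly positive, which buys us the strict gap below $\pi$ (the half-vortex core energy being exactly $\pi$ in the limit and the $L^2$-scale terms being $o_\v(1)$). I would make this quantitative by scaling the construction at a size $\lambda$ with $\v\ll\lambda\ll r$, so that the core term is $\pi+o_\lambda(1)$, the potential term is $O(\v^2/\lambda^2)=o_\v(1)$, and the negative phase term is bounded away from $0$ uniformly; choosing $\lambda$ small then $\v$ small yields the strict inequality.

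Finally, for the approximate bulk degree: I would invoke part 2) of Proposition \ref{P3.1}, $|\ab_j(\tilde u)-\ab_j(u)|\le \tfrac{2}{\pi}\|V_j\|_{C^1}\Lambda^{1/2}\|\tilde u-u\|_{L^2}$, together with $\|\tilde u-u\|_{L^2(\dom)}=\|\rho w-u\|_{L^2(G)}=O(\lambda)$ since the modification is confined to a set of measure $O(\lambda^2)$ and both maps are bounded by $1$. Since by hypothesis $\ab_j(u)\in(d_j-\tfrac13,d_j+\tfrac13)$ with a strictly positive distance to the endpoints, shrinking $\lambda$ keeps $\ab_j(\tilde u)$ in the same interval for all $j$, which gives the last assertion. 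The main obstacle I anticipate is step two: constructing the test function $w$ so that $L_\v(w,G)$ is \emph{strictly} below $\pi$ rather than merely $\le\pi+\eta$ — i.e. extracting the strict gap — which is precisely where the sign hypothesis $u\times\p_\tau u(x^i)>0$ must be used, via the lower bound on $\p_\tau\theta$ feeding the negative term in $L_\v$; handling the boundary (half-disc) geometry and the matching of $w$ to $v$ on $\p G\setminus\p\omega_i$ while controlling that term is the delicate part.
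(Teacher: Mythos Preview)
Your overall architecture is right and matches the paper's: localize near $x^i$, write $u=\rho\e^{\imath\theta}$ there, invoke Lemma~\ref{L3.16} to reduce the problem to finding $w$ with $L_\v(w,G)<\pi$, and control the approximate bulk degrees via Proposition~\ref{P3.1}~2) using that the modification lives on a set of small measure. Where the proposal breaks down is precisely the step you flag as delicate --- producing the \emph{strict} inequality $L_\v(w,G)<\pi$ --- and the breakdown is not merely a gap in detail but a misidentification of the mechanism and a wrong scaling.

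First, $\v$ is \emph{fixed} in this lemma (there is no ``for $\v$ small'' in the statement), so you cannot use $\v$ as an asymptotic parameter; in particular ``$o_\v(1)$'' has no meaning here. Second, your estimate of the potential term is inverted: if the test map departs from modulus one on a region of scale $\lambda$ (as any Möbius-type degree-shifter does), then $\tfrac{1}{4\v^2}\int\rho^4(1-|w|^2)^2$ is of order $\lambda^2/\v^2$, not $\v^2/\lambda^2$, so the regime $\v\ll\lambda$ makes it blow up rather than vanish. Third, the strict gain does \emph{not} come from the term $-\tfrac12\int|w|^2\rho^2|\nabla\theta|^2$. Writing $w=\e^{\imath\theta}\psi$ one finds
\[
L_\v(w,G)=\tfrac12\int_G\rho^2|\nabla\psi|^2+\int_G\rho^2\,(\psi\times\nabla\psi)\cdot\nabla\theta+\tfrac{1}{4\v^2}\int_G\rho^4(1-|\psi|^2)^2,
\]
so the $|\nabla\theta|^2$ terms cancel exactly; the only place the hypothesis $\p_\tau\theta(x^i)>0$ can enter is through the \emph{cross term} $\int\rho^2(\psi\times\nabla\psi)\cdot\nabla\theta$, and extracting a definite negative sign from it against a Dirichlet term that is already essentially $\pi$ is not a soft argument.

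The paper handles this by introducing the conjugate function $h$ of \eqref{3.42} (so $\nabla h=-\rho^2\nabla^\bot\theta$), using $(\theta,h)$ as local coordinates near $x^i$ --- the hypothesis $u\times\p_\tau u(x^i)>0$ is exactly $\p_\nu h(x^i)>0$, which makes this a diffeomorphism on a small box $D_\delta$ --- and then writing the test factor $\psi_t$ explicitly as a Fourier series in $\theta$ with $h$-dependent coefficients modelled on the Möbius map $\F_t(\e^{\imath\theta})=\frac{\e^{-\imath\theta}-(1-t)}{\e^{-\imath\theta}(1-t)-1}$. Because the Jacobian of $(\theta,h)$ is $\rho^2|\nabla\theta|^2$, the functional $L_\v$ (bounded above by an auxiliary $M_\lambda$) becomes an explicit double sum in the Fourier modes; a lengthy but elementary computation (Lemma~\ref{L3.simplification} and Appendix~\ref{3.Apsoft}) then yields $M_\lambda(w_t,D_\delta)\le \pi-2\delta t+o(t)<\pi$ for small $t$. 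The small parameter is $t$ (and the box size $\delta$), not $\v$, and the strict gap $-2\delta t$ arises from the cross term above, computed exactly. Your sketch does not supply any substitute for this computation.
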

The proof of Lemma \ref{L3.15soft} is postponed to section \ref{3.sectionPREUVE}.

We also have an upper bound for $m_\v({\bf p},q,\db)$.
\begin{lem}\label{L3.bornesup}
Let $\v>0$ and $({\bf p},q,\db)\in\Z^N\times\Z\times\Z^N$. Then
\begin{equation}\label{3.supmv}
m_\v({\bf p},q,\db)\leq I_0(\db,\dom)+\pi\tae((\db,d),({\bf p},q)).
\end{equation}
\end{lem}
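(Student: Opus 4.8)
The plan is to prove Lemma \ref{L3.bornesup} by exhibiting, for each $\eta>0$, an explicit test map $u\in\J_{{\bf p},q}^\db$ with $E_\v(u)\le I_0(\db,\dom)+\pi\tae((\db,d),({\bf p},q))+\eta$, and then letting $\eta\to0$. The starting point is the minimizer $u_0\in E_\db$ of the Dirichlet energy constructed in \cite{BBH}: it satisfies $|u_0|\equiv1$, $\deg(u_0,\dom)=(\db,d)$, $E_\v(u_0)=E(u_0)=I_0(\db,\dom)$ (the potential term vanishes since $|u_0|=1$), and by Proposition \ref{P3.1}(1) its approximate bulk degree equals $\db$, so $u_0\in\J_{\db,d}^\db$.

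The next step is to move the degrees from $(\db,d)$ to $({\bf p},q)$ one unit at a time. Since $({\bf p},q,\db)$ is arbitrary in $\Z^N\times\Z\times\Z^N$, write the total number of unit steps needed as $\tae((\db,d),({\bf p},q))=\sum_{i=1}^N|d_i-p_i|+|d-q|$. I would apply the "soft" degree-changing construction $\tae((\db,d),({\bf p},q))$ times: at each stage, starting from a map $v$ in some $\J_{{\bf p}',q'}^{\db}$, I produce $v'\in\J_{({\bf p}',q')\pm{\bf e}_i}^{\db}$ with $E_\v(v')\le E_\v(v)+\pi+\eta/\tae((\db,d),({\bf p},q))$. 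The cheapest way to do this, since we only need an upper bound and not the sharp strict inequality, is to invoke the already-quoted fact (the displayed estimate just before Lemma \ref{L3.15soft}, i.e. Lemma \ref{L3.bougedegfinal} in Appendix \ref{3.bougerledeg}): for $\eta'>0$, $i\in\{0,\dots,N\}$ and any $v\in\J_{\deg(v,\dom)}$ there are $v_\pm\in\J_{\deg(v,\dom)\pm{\bf e}_i}$ with $E_\v(v_\pm)\le E_\v(v)+\pi+\eta'$. Iterating and summing the $\eta'$'s, after all $\tae((\db,d),({\bf p},q))$ steps we reach a map $u\in\J_{{\bf p},q}$ with $E_\v(u)\le I_0(\db,\dom)+\pi\tae((\db,d),({\bf p},q))+\eta$.

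It remains to control the approximate bulk degree along the iteration so that the final map lies in $\J_{{\bf p},q}^\db$ and not merely in $\J_{{\bf p},q}$. Here I would use Proposition \ref{P3.1}(2): the $\ab_i$ are Lipschitz on energy-bounded sets with respect to the $L^2$ distance. The degree-modifying perturbations of Lemma \ref{L3.bougedegfinal} are localized near a boundary component and can be taken to change the map only on an arbitrarily small neighbourhood of $\p\omega_i$ (or $\p\Omega$); hence $\|v_\pm-v\|_{L^2(\dom)}$ can be made as small as we wish while keeping the energy increment below $\pi+\eta'$. Since all the intermediate maps have energy bounded by $\Lambda:=I_0(\db,\dom)+\pi\tae((\db,d),({\bf p},q))+\eta$, estimate \eqref{3.5} gives $|\ab_i(v_\pm)-\ab_i(v)|\le\frac{2}{\pi}\|V_i\|_{C^1(\dom)}\Lambda^{1/2}\|v_\pm-v\|_{L^2(\dom)}$, which we force to be $\le\frac{1}{3\tae((\db,d),({\bf p},q))}$ at each step. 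Starting from $\ab(u_0)=\db$ and accumulating over the (finitely many) steps, the final map satisfies $\|\ab(u)-\db\|_\infty\le\frac13$, so $u\in\J_{{\bf p},q}^\db$, and taking the infimum over such $u$ and letting $\eta\to0$ yields \eqref{3.supmv}.

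The only genuinely delicate point is the localization of the degree-changing perturbation together with the requisite $L^2$-smallness; but this is exactly the content of the cited Lemma \ref{L3.bougedegfinal} (whose proof is deferred to Appendix \ref{3.bougerledeg}), so in the present argument it may be taken as a black box. Everything else—the choice of $u_0$, the vanishing of the potential term, the telescoping of the energy increments, and the Lipschitz control of $\ab$—is routine. I do not expect any obstacle beyond bookkeeping of the finitely many $\eta'$ and $L^2$ thresholds.
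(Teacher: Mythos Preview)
Your proof is correct and follows essentially the same route as the paper: start from the $\mathbb{S}^1$-valued minimizer $u_0\in E_\db$ (so $E_\v(u_0)=I_0(\db,\dom)$ and $\ab(u_0)=\db$), then use the degree-shifting construction with $L^2$-small support to reach $\J_{{\bf p},q}$ at energy cost $\pi\tae+\eta$, controlling $\ab$ via Proposition~\ref{P3.1}(2). The only cosmetic difference is that the paper packages the full iteration into Lemma~\ref{P3.bougedegmult} (which already delivers the $L^2$-smallness \eqref{3.convennormeL2mul} for the composite perturbation) and applies it in one shot, whereas you unroll the iteration and track $\ab$ step by step; both amount to the same argument.
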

To prove Lemma \ref{L3.bornesup}, we need the following
\begin{lem}\label{P3.bougedegmult}
Let $u\in\J$, $\v>0$ and ${\bf \delta}=(\delta_1,...,\delta_N,\delta_0)\in\Z^{N+1}$. For all $\eta>0$, there is $u_\eta^\delta\in\J_{\deg(u,\dom)+\delta}$ s.t.
\begin{equation}\label{3.bougedegetamul}
E_\v(u_\eta^\delta)\leq E_\v(u)+\pi\sum_{i\in\{0,...,N\}}{|\delta_i|}+\eta
\end{equation}

and
\begin{equation}\label{3.convennormeL2mul}
\|u-u_\eta^\delta\|_{L^2(\dom)}=o_\eta(1),\:o_\eta(1)\underset{\eta\goto0}{\goto}0.
\end{equation}
\end{lem}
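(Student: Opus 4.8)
\textbf{Proof proposal for Lemma \ref{P3.bougedegmult}.}

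The plan is to construct $u_\eta^\delta$ by successively modifying $u$ near each boundary component, changing one unit of degree at a time. Since the degree contributions on distinct components of $\p\dom$ are independent, it suffices to treat the case $\delta = \pm {\bf e}_i$ for a single $i \in \{0,\dots,N\}$ and then iterate $\sum_i |\delta_i|$ times, each step costing at most $\pi + \eta/(\sum_i|\delta_i|)$ in energy and $o_\eta(1)$ in $L^2$-distance; the triangle inequality then gives \eqref{3.bougedegetamul} and \eqref{3.convennormeL2mul}. So I reduce to building, for given $\eta>0$ and a single index $i$, a map $\tilde u \in \J_{\deg(u,\dom)\pm{\bf e}_i}$ with $E_\v(\tilde u) \le E_\v(u) + \pi + \eta$ and $\|\tilde u - u\|_{L^2(\dom)}$ small.

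For this single-component step I would localize near $\p\omega_i$ (writing $\omega_0 = \O$). Fix a point $x_0 \in \p\omega_i$ and a small disc $D = D(x_0,r)$; work in the half-disc-like region $D \cap \dom$. The idea is to glue in a standard degree-$(\pm1)$ "bubble" concentrated at scale $\v$ (or, more robustly, a scale-$r$ vortex-type profile whose Dirichlet energy is close to $\pi$): inside $D\cap\dom$ replace $u$ by a map of the form $u \cdot \phi$, where $\phi$ is a fixed $\mathbb{S}^1$-valued map on $\dom\setminus\{x_0\}$ that equals $1$ outside $D$, has a single $\pm1$ vortex near $x_0$, and satisfies $|\tr_{\p\dom}\phi| \equiv 1$ with the right tangential degree so that $\deg_{\p\omega_i}(u\phi) = \deg_{\p\omega_i}(u) \pm 1$ and all other degrees are unchanged. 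Since $|\phi|=1$ we still have $u\phi \in \J$. The energy of $u\phi$ on $D\cap\dom$ is controlled by $E_\v(u, D\cap\dom)$ plus the energy of the inserted vortex plus cross terms; one chooses the vortex profile with Dirichlet energy $\le \pi + \eta/2$ (a truncated $x/|x|$ profile on an annulus of fixed inner radius $\sim\v^{1/2}$, say, filled by a core of size $\v$ contributing $o_\v(1)$ to the potential term), and shrinks $r$ so that both $E_\v(u,D\cap\dom)$ and the $L^2$-norm of $u$ over $D\cap\dom$ (hence the $L^2$-change) are smaller than $\eta/2$ and $o_\eta(1)$ respectively — this uses only absolute continuity of the integrals $\int |\n u|^2$ and $\int |u|^2$ with respect to the domain. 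Cross terms $\int_{D\cap\dom}\rho^2 \n u\cdot\n\phi$ are handled by Cauchy–Schwarz against these small quantities, possibly after an additional truncation of $|u|$ (which is harmless by $|u|\le 1$).

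The main obstacle is the boundary bookkeeping: one must insert the vortex so close to $\p\omega_i$ that the resulting map still lies in $\J$ (modulus $1$ on all of $\p\dom$, not merely in the interior) and changes exactly the degree on $\p\omega_i$ by the desired $\pm1$ without disturbing $\deg_{\p\omega_j}$ for $j\ne i$ or $\deg_{\p\O}$. The cleanest way to enforce this is to build $\phi$ as $e^{\imath\psi}$ where $\psi$ is the (multivalued) phase whose conjugate is harmonic with a prescribed logarithmic singularity at $x_0\in\p\omega_i$ — equivalently, take $\phi$ to be (a smooth truncation of) the $\pm1$-degree map associated with a point charge placed on $\p\omega_i$, whose flux through $\p\omega_i$ is $\pm1$ and through every other component is $0$; near $x_0$ it looks like $((x-x_0)/|x-x_0|)^{\pm1}$ up to a smooth unimodular factor, and its total energy over $\dom\setminus D(x_0,s)$ tends to $\pi|\log s| \cdot 0$... more precisely one uses the standard fact that such a half-plane-type vortex of size $r$ carries Dirichlet energy $\pi\log(r/\v)+O(1)$ on the annulus and the singular part can be absorbed. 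Alternatively — and this is likely the route actually taken — one simply invokes Lemmas 6.1–6.2 of \cite{BeMi1} to get $M_n \in \J_{\pm{\bf e}_i}$ with $M_n \rightharpoonup 1$ in $H^1$ and $|M_n|\le 1$, set $\tilde u = u M_n$ for large $n$; then $\deg(\tilde u,\dom) = \deg(u,\dom)\pm{\bf e}_i$, $\tilde u \rightharpoonup u$ in $H^1$, $\tilde u \to u$ in $L^2$ (giving \eqref{3.convennormeL2mul}), and the energy bound follows because the $M_n$ in \cite{BeMi1} are constructed precisely as localized vortices whose excess Dirichlet energy tends to $\pi$ — so $\limsup_n E_\v(uM_n) \le E_\v(u) + \pi$, and one picks $n$ large enough to get the $+\eta$. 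I would present the proof in this second form, reducing everything to the cited construction in \cite{BeMi1} plus the multiplicativity of degrees and a Cauchy–Schwarz estimate for the cross term $\int \rho^2 \n u \cdot (\imath M_n^{-1}\n M_n)$, which is $o_n(1)$ by weak convergence $\n M_n \rightharpoonup 0$ on the (fixed) support complement and smallness of $\int_{\mathrm{supp}}|\n u|^2$.
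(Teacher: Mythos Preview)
Your reduction to a single $\pm{\bf e}_i$ step followed by $\sum_i|\delta_i|$ iterations, with budget $\eta/\sum_i|\delta_i|$ per step, is exactly what the paper does, so the global architecture is fine.

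The single-step construction, however, has real gaps. Your first route (insert a vortex $x/|x|$ with an $\v$-core) carries the wrong energy scale: an interior vortex on an annulus contributes $\pi\log(r/\v)$, not $\pi$, and you visibly hesitate at that point. What is needed is a \emph{boundary bubble} of energy $\pi+o(1)$, not an interior vortex. Your second route (multiply by the maps $M_n$ from Lemmas~6.1--6.2 of \cite{BeMi1}) relies on two assertions you do not justify: (i) that those $M_n$ satisfy $\tfrac12\int_\dom|\n M_n|^2\to\pi$ (those lemmas, as invoked in the paper, give only $M_n\rightharpoonup1$ and $|M_n|\le1$), and (ii) that $\int_\dom|u|^2|\n M_n|^2$ is asymptotically $\le 2\pi$. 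Point (ii) is the term you never discuss: your cross-term argument via weak convergence handles $\int \bar M_n u\,\overline{\n u}\cdot\n M_n$, but the diagonal term $\int|u|^2|\n M_n|^2$ requires knowing \emph{where} $|\n M_n|^2$ concentrates and that $|u|\approx1$ there---nontrivial since $u\in\J$ says nothing about $|u|$ in the interior. You also do not treat the potential term for $uM_n$.

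The paper does not appeal to \cite{BeMi1} here. It first builds, by hand, a map $M_{\eta,\delta}:D(0,1)\to\C$ which is the harmonic extension of a cut-off M\"obius boundary datum $\e^{\imath\theta}\mapsto\dfrac{\e^{\imath\theta}-(1-t\varphi(\theta))}{\e^{\imath\theta}(1-t\varphi(\theta))-1}$; a Fourier computation gives $\tfrac12\int_{D(0,1)}|\n M_{\eta,\delta}|^2\le\pi+\eta$, $|M_{\eta,\delta}|\le2$, $\deg_{\mathbb S^1}M_{\eta,\delta}=1$, and $M_{\eta,\delta}\equiv1$ on most of $\mathbb S^1$. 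For the single step one then reduces by density to $u\in C^0(\overline\dom)\cap\J$, chooses a tiny simply connected neighbourhood $V_\eta$ of a point $x^0\in\p\o_i$ with $|V_\eta|\le\eta^2$, $|u|^2\le1+\eta^2$ and $\|\n u\|_{L^2(V_\eta)}\le\eta^2$, maps $V_\eta$ conformally onto $D(0,1)$ via Carath\'eodory, and sets $N_\eta:=M_{\eta^2,\delta}\circ\Phi$ (equal to $1$ outside $V_\eta$). Conformal invariance gives $\tfrac12\int_\dom|\n N_\eta|^2\le\pi+\eta^2$, the product estimate $\tfrac12\int_{V_\eta}|\n(uN_\eta)|^2\le\pi+\eta/2$ is then elementary because $|u|^2\le1+\eta^2$ and $\|\n u\|_{L^2(V_\eta)}$ is tiny, and the potential contribution is controlled by $|V_\eta|\le\eta^2$ together with $\eta<\v^2$. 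This yields $u_\eta^\pm:=uN_\eta\in\J_{\deg(u,\dom)\pm{\bf e}_i}$ with $E_\v(u_\eta^\pm)\le E_\v(u)+\pi+\eta$ and $\|u_\eta^\pm-u\|_{L^2}=o_\eta(1)$; Lemma~\ref{P3.bougedegmult} follows by iterating this single-step lemma. The point your proposal misses is precisely this explicit localized construction with the $|u|^2\le1+\eta^2$ choice, which is what makes the $|u|^2|\n N_\eta|^2$ and potential terms harmless.
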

The proof of Lemma \ref{P3.bougedegmult} is postponed to Appendix \ref{3.bougerledeg}.
\begin{proof}
We prove that for $\eta>0$ small, we have
\[
m_\v({\bf p},q,\db)\leq I_0(\db,\dom)+\pi\tae((\db,d),({\bf p},q))+\eta.
\]
We denote $u_0\in E_\db$ s.t. $E(u_0)=I_0(\db,\dom)$. Then $\ab_i(u_0)=d_i$. 

Using Lemma \ref{P3.bougedegmult} with $\delta=({\bf p},q)-({\bf d},d)$, there is $u_\eta$ s.t. 
\[
u_\eta\in\J_{({\bf p},q)}\text{ and }E_\v(u_\eta)\leq E_\v(u_0)+\pi\tae((\db,d),({\bf p},q))+\eta=I_0(\db,\dom)+\pi\tae((\db,d),({\bf p},q))+\eta.
\] 
Furthermore, by \eqref{3.convennormeL2mul}, $\|u_0-u_\eta\|_{L^2(\dom)}=o_\eta(1)$. For $\eta$ small, by Proposition \ref{P3.1}, we have $u_0\in\Jp$ which proves the lemma.
\end{proof}

\section{A family  with bounded energy converges}

In this section we discuss:
\begin{enumerate}  \item the asymptotic behaviour of a sequence of solutions of (\ref{3.8}), (\ref{3.8'}), $(u_{\v_n})_n\subset\Jp$ ($\v_n\downarrow0$) with bounded energy , \emph{i.e}, $E_{\v_n}(u_{\v_n})\leq\Lambda$, 
\item the asymptotic behaviour of a minimizing sequence of $E_\v$ in $\Jp$,
\item a fundamental lemma.

\end{enumerate}
\begin{prop}\label{P3.conv}
Let $\v_n\downarrow0$, $(u_{\v_n})_n\subset\Jp$  with $u_{\v_n}$ a solution of (\ref{3.8}), (\ref{3.8'}),  s.t. for $\Lambda>0$, we have
\[
E_{\v_n}(u_{\v_n})\leq\Lambda.
\]
Then, denoting $h_{\v_n}$ the unique solution of (\ref{3.42}) with $u=u_{\v_n}$, we have
\begin{equation}\label{3.convh}
h_{\v_n}\faible h_0\text{ in }H^1(\dom),
\end{equation}
where $h_0$ is the unique solution of (\ref{3.18}).
\\Up to subsequence, it holds
\begin{equation}\label{3.convu}
u_{\v_n}\faible u_0\text{ in }H^1(\dom),
\end{equation}
where $u_0 \in E_{\db} $ is the unique solution of (\ref{3.17}) up to multiplication by an $\mathbb{S}^1$-constant.
\end{prop}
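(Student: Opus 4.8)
The plan is to establish the two convergences as consequences of the uniform energy bound, the fact that the relevant classes are weakly closed enough, and the uniqueness results from \cite{BBH}. First I would note that $E_{\v_n}(u_{\v_n})\leq\Lambda$ gives a uniform $H^1$ bound on $(u_{\v_n})_n$, so up to a subsequence $u_{\v_n}\faible u_0$ in $H^1(\dom)$ and $u_{\v_n}\to u_0$ in $L^4(\dom)$ (and in $L^2$). The $L^4$ convergence together with $\frac1{4\v_n^2}\int_\dom(1-|u_{\v_n}|^2)^2\leq\Lambda$ forces $|u_0|=1$ a.e., so $u_0\in H^1(\dom,\mathbb S^1)$. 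By Proposition \ref{P3.1}(3), $\dist(\ab(u_{\v_n}),\Z^N)\to0$, and since $u_{\v_n}\in\Jp$ we have $\|\ab(u_{\v_n})-\db\|_\infty\leq\frac13$; combined with continuity of $\ab$ under the strong $L^2$-convergence with energy bound (Proposition \ref{P3.1}(2)) we get $\ab(u_0)=\db$, hence by Proposition \ref{P3.1}(1), $\deg_{\p\o_i}(u_0)=d_i$. To pin down the total degree on $\p\O$, I would use that $\deg$ is weakly continuous modulo the jump quantified in Lemma \ref{L3.6}: applying \eqref{3.34} to the sequence and using the energy upper bound $E_{\v_n}(u_{\v_n})\leq m_{\v_n}(\bfp,q,\db)+o(1)\leq I_0(\db,\dom)+\pi\tae((\db,d),({\bf p},q))+o(1)$ from Lemma \ref{L3.bornesup}, together with the lower bound $E(u_0)\geq I_0(\db,\dom)$ (since $u_0\in H^1(\dom,\mathbb S^1)$ with $\deg_{\p\o_i}=d_i$ forces $u_0\in E_\db$ once the total degree is correct), I can show the degree jump $\tae$ is exactly accounted for and that $u_0$ minimizes $E$ in $E_\db$, so $u_0$ is the \cite{BBH} minimizer up to an $\mathbb S^1$-constant. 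Actually, a cleaner route: since $u_{\v_n}$ is a \emph{solution} of \eqref{3.8}, \eqref{3.8'}, not just any competitor, I should instead work with $h_{\v_n}$.

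For the $h_{\v_n}$ convergence, recall $\n^\bot h_{\v_n}=u_{\v_n}\times\n u_{\v_n}$, so $\|\n h_{\v_n}\|_{L^2}=\|u_{\v_n}\times\n u_{\v_n}\|_{L^2}\leq\|\n u_{\v_n}\|_{L^2}\leq(2\Lambda)^{1/2}$, and $h_{\v_n}=1$ on $\p\O$, so $(h_{\v_n})_n$ is bounded in $H^1(\dom)$; up to a subsequence $h_{\v_n}\faible h_*$ in $H^1$ for some $h_*$ with $h_*=1$ on $\p\O$. Passing to the weak limit in $\n^\bot h_{\v_n}=u_{\v_n}\times\n u_{\v_n}$: the right side converges weakly in $L^2$ (or at least in the sense of distributions) to $u_0\times\n u_0$, because $u_{\v_n}\to u_0$ strongly in $L^2$ (indeed in $L^4$) and $\n u_{\v_n}\faible \n u_0$ weakly in $L^2$, so the product converges in $\mathcal D'$. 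Since $u_0\in H^1(\dom,\mathbb S^1)$, $u_0\times\n u_0=\n\varphi$ locally for a lifting $\varphi$, hence $\Div(u_0\times\n u_0)=0$, so $\Delta h_*=0$ in $\dom$; moreover on each $\p\o_i$, $h_{\v_n}=k_i^{(n)}$ is constant, and the constancy passes to the trace limit so $h_*$ is constant on each $\p\o_i$. Finally $\int_{\p\o_i}\p_\nu h_{\v_n}\,d\sigma=\int_{\p\o_i}(u_{\v_n}\times\p_\tau u_{\v_n})\,d\tau\cdot(\pm1)=2\pi\deg_{\p\o_i}(u_{\v_n})=2\pi d_i$ (using $\deg_{\p\o_i}(u_{\v_n})=p_i$ — wait, here one must be careful: the degree on $\p\o_i$ is $p_i$, not $d_i$; this is the subtle point and needs the approximate-bulk-degree argument, not a naive circulation count). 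Let me instead argue: the weak limit $h_*$ solves the same boundary value problem \eqref{3.18} as $h_0$ — harmonic, equal to $1$ on $\p\O$, constant on each $\p\o_i$ — and the normal-flux condition $\int_{\p\o_i}\p_\nu h_*=2\pi d_i$ follows from $\ab_i(u_0)=d_i$ together with the identity relating $\ab_i$ to a circulation of $\n h_{\v_n}$ against $V_i$ (cf. the computation behind Proposition \ref{P3.1}); since \eqref{3.18} has a unique solution, $h_*=h_0$, and as the limit is independent of the subsequence, the whole sequence converges, giving \eqref{3.convh}.

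Then \eqref{3.convu} follows: having $u_{\v_n}\faible u_0$ along a subsequence with $u_0\in H^1(\dom,\mathbb S^1)$, the relation $u_0\times\n u_0=\n^\bot h_0$ shows $u_0$ has a local lifting whose harmonic conjugate is $h_0$; by the last remark of Section \ref{3.BASIC}, $h_0$ is precisely the harmonic conjugate of a local lifting of the \cite{BBH} minimizer $u_0^{BBH}$, and since $\deg(u_0,\dom)=(\db,d)$ (the $\p\o_i$-degrees are $d_i$ by $\ab_i(u_0)=d_i$ and Proposition \ref{P3.1}(1), and the total degree is then $d=\sum d_i$ because $\sum\deg_{\p\o_i}(u_0)=\deg_{\p\O}(u_0)$ for an $\mathbb S^1$-valued map), $u_0\in E_\db$ and its energy $E(u_0)=\frac12\int|\n h_0|^2=I_0(\db,\dom)$ by \eqref{3.17}; hence $u_0$ is a minimizer and by Theorem I.1 of \cite{BBH} equals $u_0^{BBH}$ up to an $\mathbb S^1$-constant. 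This identifies the limit and completes the proof. The main obstacle I anticipate is the careful bookkeeping of which degree ($p_i$ versus $d_i$) appears where: the boundary circulation of $u_{\v_n}$ on $\p\o_i$ records $p_i$, whereas the limit object carries the bulk degree $d_i$; reconciling these requires leaning on Proposition \ref{P3.1} (the approximate bulk degree is the correct weakly-continuous quantity) rather than on naive passage to the limit in boundary integrals, and this is exactly where the $\frac13$-neighbourhood constraint defining $\Jp$ is used.
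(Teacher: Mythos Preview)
Your argument (after the self-corrections you flag) is correct and follows essentially the same route as the paper: bound $(h_{\v_n})$ in $H^1$, show the weak limit is harmonic with constant trace on each $\p\o_i$, identify the flux $\int_{\p\o_i}\p_\nu h_*=2\pi d_i$ via the identity $2\pi\,\ab_i(u_{\v_n})=\int_\dom\n V_i\cdot\n h_{\v_n}$, conclude $h_*=h_0$ by uniqueness of \eqref{3.18}, and then read off $E(u_0)=\tfrac12\int|\n h_0|^2=I_0(\db,\dom)$ so that $u_0$ is the \cite{BBH} minimizer.

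The only substantive difference is the harmonicity step: the paper invokes Lemma~\ref{L3.2} to get $u_{\v_n}\to u_0$ in $C^1_{\rm loc}$, whence $\Delta h_{\v_n}=2\p_1 u_{\v_n}\times\p_2 u_{\v_n}\to0$ in $C^0_{\rm loc}$; you instead argue directly that $|u_0|=1$ forces $\p_1 u_0\times\p_2 u_0=0$. Your shortcut is slightly more elementary (it does not use that the $u_{\v_n}$ are solutions), while the paper's version yields stronger local convergence as a by-product. One small slip: where you write ``$\Div(u_0\times\n u_0)=0$'' you need the \emph{curl} of $u_0\times\n u_0$ to vanish (it is locally a gradient), since $\Delta h_*$ equals that curl, not the divergence. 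And you are right that your first-paragraph attempt through Lemma~\ref{L3.bornesup} cannot work here: the $u_{\v_n}$ are assumed only to be solutions with bounded energy, not minimizers, so no upper bound of the form $E_{\v_n}(u_{\v_n})\le m_{\v_n}(\bfp,q,\db)+o(1)$ is available.
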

\begin{proof}
Using the energy bound on $u_{\v_n}$ and a Poincaré type inequality, we have, up to subsequence, 
\[
h_{\v_n}\faible h \text{ in }H^1.  
\]

In order to establish (\ref{3.convh}), it suffices to prove that $h=h_0$. 

The set $\mathcal{H}:=\{h\in H^1(\dom,\R)\,;\,\p_\tau h\equiv0\text{ on }\p \dom\text{ and }h_{|\p\O}\equiv1\}$ is closed convex in $H^1(\dom,\R)$. Since $(h_{\v_n})_n\subset\mathcal{H}$, we find that $h\in\mathcal{H}$.

By boundedness  of $E_{\v_n}(u_{\v_n})$, Lemma \ref{L3.2} implies that $u_{\v_n}$ is bounded in $C_{\rm loc}^2(\dom,\R^2)$. Therefore there is some $u\in C^1_{{\rm loc}}(\dom,\C)$ s.t., up to subsequence, $u_{\v_n}\rightarrow u$ in $C_{\rm loc}^1(\dom,\R^2)$, $L^4(\dom,\R^2)$ and weakly in $H^1(\dom,\R^2)$.

Using the strong convergence in $L^4$ and the energy bound on $u_{\v_n}$, we find that $u\in H^1(\dom,\mathbb{S}^1)$. It follows that $\p_1u\times\p_2 u=0$ in $\dom$. On the other hand, 
\[
\Delta h_{\v_n}=2\p_1u_{\v_n}\times\p_2 u_{\v_n}\rightarrow0\text{ in } C_{\rm loc}^0.
\]
Therefore, $h$ is a harmonic function in $\dom$.

In order to show that $h=h_0$, it suffices to check that
\[
\d\int_{\p\omega_i}{\p_\nu h\di\sigma}=2\pi d_i.
\]
To this end, we note that, since $u_{\v_n}\times(\p_1 V_i\p_2 u_{\v_n}-\p_2 V_i\p_1 u_{\v_n})=\n V_i\cdot\n h_{\v_n}$, we have from \eqref{3.3}
\[
2\pi\,\ab_i(u_{\v_n})=\int_\dom{\n V_i\cdot\n h_{\v_n}\di x}\xrightarrow[n\rightarrow\infty]{}\int_\dom{\n V_i\cdot\n h\di x}=\int_{\p \dom\setminus\p \omega_i}{\p_\nu h\di \sigma}.
\]
Noting that, by Proposition \ref{P3.1}, 
\[
\left\{\begin{array}{l}
\ab_i(u_{\v_n})\xrightarrow[n\rightarrow\infty]{}\ab_i(u)=\deg_{\p\omega_i}(u)
\\
\ab_i(u_{\v_n})\xrightarrow[n\rightarrow\infty]{} d_i
\end{array}\right.
\] 
and that $\d 0=\int_\dom{\Delta h\di x}=\int_{\p \dom}{\p_\nu h\di \sigma}$, we obtain
\[
\int_{\p \dom\setminus\p \omega_i}{\p_\nu h\di \sigma}=\int_{\p \omega_i}{\p_\nu h\di \sigma}=2\pi \,d_i=2\pi\,\deg_{\p\omega_i}(u).
\]
In the first integral, $\nu$ is the outward normal to $\dom$, in the second, $\nu$ is the outward normal to $\omega_i$.

This proves (\ref{3.convh}). 

We next turn to (\ref{3.convu}). Let $u_0$ be s.t., up to subsequence, $u_{\v_n}\faible u_0$ in $H^1(\dom)$. Since $|u_{\v_n}|\leq1$, we find that
\[
u_{\v_n}\times\n u_{\v_n}\faible u_0\times\n u_0\text{ in }L^2(\dom).
\]
In view of \eqref{3.42} and \eqref{3.convh}, we have $u_0\times\n u_0=\n^\bot h_0$.
Therefore,
\[
E(u_0)=E(h_0)=I_0(\db,\dom).
\]
Proposition \ref{P3.1} implies that $u_0\in E_\db$. Then $u_0$ is the unique, up to multiplication by an $\mathbb{S}^1$-constant, minimizer of $E$ in $E_\db$.
\end{proof}
\begin{prop}\label{P3.min} 
Let $({\bf p},q,\db)\in\Z^N\times\Z\times\Z^N$. For $\v>0$, let $(u^\v_n)_{n\geq0}\subset\Jp$ be a minimizing sequence of $E_\v$ in $\Jp$. Then there is $\v_{\etiquette{3.eps}{3.eps4}} \left({\bf p},q,\db\right)>0$ s.t. for $0<\v<\v_{\ref{3.eps4}}$, up to subsequence, $u_n\faible u$ in $H^1$ with $u$ which minimizes $E_\v$ in $\J_{\deg(u,\dom)}^\db$.
\end{prop}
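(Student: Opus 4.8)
The plan is to run the direct method in the calculus of variations on the class $\Jp$, using the previously established machinery to overcome the failure of weak $H^1$-closedness. Let $(u_n^\v)_n\subset\Jp$ be a minimizing sequence, so $E_\v(u_n^\v)\to m_\v(\bfp,q,\db)$. By Lemma \ref{L3.bornesup}, $m_\v(\bfp,q,\db)\leq I_0(\db,\dom)+\pi\tae((\db,d),(\bfp,q))=:\Lambda_0$, so the sequence has energy bounded by $\Lambda_0+1$ for $n$ large; in particular it is bounded in $H^1(\dom)$. Passing to a subsequence, $u_n^\v\faible u$ in $H^1$ and $u_n^\v\to u$ in $L^4$ (hence in $L^2$). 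The $L^4$ convergence gives $|u|=1$ on $\p\dom$ (so $u\in\J$), and by the $L^2$ convergence together with the Lipschitz estimate \eqref{3.5} of Proposition \ref{P3.1}, $\ab_i(u_n^\v)\to\ab_i(u)$, so $\|\ab(u)-\db\|_\infty\leq 1/3$, i.e. $u\in\J_{\deg(u,\dom)}^\db$. The degree of $u$ need not equal $(\bfp,q)$ — this is exactly the subtlety — but $u$ lives in the class $\J_{\deg(u,\dom)}^\db$ appearing in the statement, which is what we want.

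Next I would show $u$ minimizes $E_\v$ in $\J_{\deg(u,\dom)}^\db$, using an exchange-of-degrees argument. Set $(\bfp',q'):=\deg(u,\dom)$. By weak lower semicontinuity of $E_\v$ combined with the degree-gap estimate \eqref{3.34} of Lemma \ref{L3.6} applied to $(u_n^\v)_n\subset\J_{\bfp,q}$ converging to $u\in\J_{\bfp',q'}$, we get
\[
m_\v(\bfp,q,\db)=\lim_n E_\v(u_n^\v)\geq E_\v(u)+\pi\,\tae((\bfp',q'),(\bfp,q)).
\]
Conversely, given any competitor $v\in\J_{\bfp',q'}^\db$, apply Lemma \ref{P3.bougedegmult} with $\delta=(\bfp,q)-(\bfp',q')$ to produce, for each $\eta>0$, a map $v_\eta\in\J_{\bfp,q}$ with $E_\v(v_\eta)\leq E_\v(v)+\pi\,\tae((\bfp',q'),(\bfp,q))+\eta$ and $\|v-v_\eta\|_{L^2}=o_\eta(1)$; by \eqref{3.5} again, for $\eta$ small $v_\eta\in\Jp$, hence $m_\v(\bfp,q,\db)\leq E_\v(v)+\pi\,\tae((\bfp',q'),(\bfp,q))+\eta$. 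Letting $\eta\to0$ and taking the infimum over $v$:
\[
m_\v(\bfp,q,\db)\leq m_\v(\bfp',q',\db)+\pi\,\tae((\bfp',q'),(\bfp,q)).
\]
Combining with the lower bound gives $E_\v(u)\leq m_\v(\bfp',q',\db)$, and since $u\in\J_{\bfp',q'}^\db$ the reverse inequality is trivial, so $u$ minimizes $E_\v$ in $\J_{\deg(u,\dom)}^\db$.

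The only remaining point is why the constant $\v_{\ref{3.eps4}}(\bfp,q,\db)$ enters, and this is where I expect the genuine work to be. The argument above uses \eqref{3.5} at a fixed $\v$ and does not by itself need $\v$ small; however, one must check two things. First, the estimate \eqref{3.5} requires the two competitors to have energy $\leq\Lambda$, and one needs $\eta$ (hence the $L^2$-perturbation size) small relative to a threshold governed by $\|V_i\|_{C^1}$ and the ambient energy level — uniformly over the minimizing sequence — which is harmless. The substantive use of small $\v$ is to guarantee that the argument is not vacuous, namely that it connects to the rest of the paper (Theorem \ref{T3.Existence}): one wants to rule out that $u$ lies at the boundary $\|\ab(u)-\db\|_\infty=1/3$ of the admissible set in a way that would obstruct later steps, and more importantly one wants the limiting class $\J_{\deg(u,\dom)}^\db$ to be nonempty and the minimizer to be a legitimate object — Proposition \ref{P3.conv} and Lemma \ref{L3.2} (valid for $\v<\v_{\ref{3.epsMi}}$) are the tools that control the limiting profile $u_0$ and feed into this. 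Thus I would take $\v_{\ref{3.eps4}}:=\v_{\ref{3.epsMi}}$ (or a smaller constant depending on $\db$ through $I_0(\db,\dom)$ and the $\|V_i\|_{C^1}$), and the bulk of the writeup is just bookkeeping to confirm all the energy bounds invoked are below the relevant thresholds for that $\v$.
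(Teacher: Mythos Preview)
Your exchange-of-degrees argument is correct and is essentially the paper's proof; the paper runs the same two ingredients (the lower bound \eqref{3.34} from Lemma~\ref{L3.6} and the upper bound via Lemma~\ref{P3.bougedegmult}), only packaged as a contradiction rather than as your direct inequality chain.

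The genuine gap is your account of $\v_{\ref{3.eps4}}$. You correctly flag that the step ``for $\eta$ small $v_\eta\in\Jp$'' is in danger when the competitor $v$ sits on the boundary $\|\ab(v)-\db\|_\infty=1/3$, since then \eqref{3.5} leaves no room. But your proposed fix is wrong: Lemma~\ref{L3.2} and Proposition~\ref{P3.conv} both concern \emph{solutions} of \eqref{3.8}--\eqref{3.8'}, and nothing in this proposition solves the PDE --- neither the weak limit $u$ nor the competitors $v$ are known to be critical points at this stage, so $\v_{\ref{3.epsMi}}$ is irrelevant. The mechanism the paper actually uses is part~3) of Proposition~\ref{P3.1}. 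From Lemmas~\ref{L3.6} and~\ref{L3.bornesup} one extracts a uniform-in-$\v$ energy bound $\Lambda$ on the weak limit $u^\v$ (and the same bound then holds for near-minimizers of $E_\v$ in $\J_{\deg(u^\v,\dom)}^\db$); then \eqref{3.6} forces $\dist(\ab(\,\cdot\,),\Z^N)\to0$ on that energy sublevel as $\v\to0$, which combined with the closed constraint $\|\ab(\,\cdot\,)-\db\|_\infty\leq1/3$ yields the \emph{strict} inequality $|\ab_i-d_i|<1/3$ for all $\v$ below some $\v_{\ref{3.eps4}}(\Lambda)$. That strict margin is exactly what your $v\mapsto v_\eta$ step needs. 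So $\v_{\ref{3.eps4}}$ should be chosen via \eqref{3.6}, not via $\v_{\ref{3.epsMi}}$.
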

\begin{proof}For $\v>0$, let $(u_n^{\v})_n\subset\Jp$ be a minimizing sequence of $E_\v$ in $\J$. Up to subsequence, using Proposition \ref{P3.1},
\[
u_n^{\v}\faible u^\v \text{ in }H^1\text{ with }u^\v\in\J^\db_{\deg(u^\v,\dom)}.
\]

Using Lemmas \ref{L3.6} and \ref{L3.bornesup}, we see that $\{\deg(u^\v,\dom),\v>0\}\subset\Z^N\times\Z$ is a finite set and that $E_\v(u^\v)$ is bounded. Therefore, with Proposition \ref{P3.1}, there is $\v_{\ref{3.eps4}}>0$ s.t. $|\ab_i(u^\v)-d_i|<\frac{1}{3}$ for all $i\in\N_N$ and $0<\v<\v_{\ref{3.eps4}}$.

We argue by contradiction and we assume that there is $\v<\v_{\ref{3.eps4}}$ s.t. 
\[
E_\v(u^\v)=m_\v(\deg(u^\v,\dom),\db)+2\eta,\,\eta>0.
\]
Let $u\in\J_{\deg(u^\v,\dom)}^\db$ be s.t. $E_\v(u)\leq m_\v(\deg(u^\v,\dom),\db)+\eta$. 

Using Lemma \ref{P3.bougedegmult} with $\delta=({\bf p},q)-\deg(u^\v,\dom)$, there is $v\in\J_{{\bf p},q}$ s.t. 
\[
E_\v(v)< E_\v(u)+\pi\tae(({\bf p},q),\deg(u^\v,\dom))+\eta.
\]Furthermore, by \eqref{3.convennormeL2mul}, $\|u-v\|_{L^2}$ can be taken arbitrary small, so that we may further assume $v\in\Jp$. To summarise we have
\begin{eqnarray*}
m_\v({\bf p},q,\db)&=&\liminf_nE_\v(u^\v_n)\\&\geq& E_\v(u^\v)+\pi\tae(({\bf p},q),\deg(u^\v,\dom))
\\&=& m_\v(\deg(u^\v,\dom),\db)+2\eta+\pi\tae(({\bf p},q),\deg(u^\v,\dom))
\\&\geq&E_\v(u)+\pi\tae(({\bf p},q),\deg(u^\v,\dom))+\eta
\\&>&E_\v(v)\geq m_\v({\bf p},q,\db).
\end{eqnarray*}
This contradiction completes the proof.

\end{proof}

The main tool requires the following lemma. 
\begin{lem}\label{L3.deg}
Let $({\bf p},q,\db)\in\Z^N\times\Z\times\Z^N$ and $\Lambda>0$. There is $\v_{\etiquette{3.eps}{3.eps5}}({\bf p},q,\db,\Lambda)>0$ s.t. for $\v<\v_{\ref{3.eps5}}$ and $u\in\Jp$, a solution of (\ref{3.8}) and (\ref{3.8'}) with $E_\v(u)\leq\Lambda$, if $d>0$ (respectively $d_i>0$), then there is $x^0\in\p\O$ (respectively $x^i\in\p\omega_i$) s.t. $u\times\p_\tau u(x^0)>0$ (respectively $u\times\p_\tau u(x^i)>0$).

Here $\tau$ is the direct tangent vector to $\p\O$ (resp. $\p\o_i$).%, $l\in\{0,1,...,N\}$ %(respectively $\p_\nu h_\v(x_\v^l)<0$)
 %ith, in the last case, $\nu$ which is the normal outward of $\omega_i$.
\end{lem}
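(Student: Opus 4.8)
The plan is to argue by contradiction using the convergence result of Proposition \ref{P3.conv}. Suppose the statement fails: then there exist $\v_n\downarrow 0$ and solutions $u_{\v_n}\in\Jp$ of (\ref{3.8}), (\ref{3.8'}) with $E_{\v_n}(u_{\v_n})\leq\Lambda$ such that, say, $d_i>0$ but $u_{\v_n}\times\p_\tau u_{\v_n}\leq 0$ everywhere on $\p\omega_i$ (the case $d>0$ on $\p\O$ is identical, using $\omega_0=\O$). By Proposition \ref{P3.conv}, up to a subsequence, $h_{\v_n}\rightharpoonup h_0$ in $H^1(\dom)$ and $u_{\v_n}\rightharpoonup u_0$ in $H^1(\dom)$ with $u_0\in E_\db$ the minimizer of $E$ in $E_\db$, $h_0$ the harmonic conjugate datum solving (\ref{3.18}). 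Moreover Lemma \ref{L3.2} gives $C^2_{\rm loc}$ bounds on the $u_{\v_n}$, hence (from the elliptic equation $-\Delta u_{\v_n}=\v_n^{-2}u_{\v_n}(1-|u_{\v_n}|^2)$ and $|u_{\v_n}|\le1$) interior regularity up to the boundary away from the zero set; in fact standard Ginzburg-Landau estimates give $u_{\v_n}\to u_0$ in $C^1$ in a neighbourhood of $\p\omega_i$ once $\v_n$ is small, since $u_0$ does not vanish on $\mathbb S^1$-valued maps and the energy bound precludes vortices near the boundary (Lemma \ref{L3.2}, estimate (\ref{3.10})).

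The next step is to transfer the sign condition to the limit. On $\p\omega_i$ we have $u_{\v_n}\times\p_\tau u_{\v_n}=\p_\tau$(phase)$\cdot|u_{\v_n}|^2$ up to the identity $u\times\p_\tau u=\n^\bot h_{\v_n}\cdot\tau=\p_\nu h_{\v_n}$ coming from (\ref{3.42}) — more precisely, on $\p\dom$ one has $u\times\p_\tau u = \p_\nu h$ where $\nu$ is the outward normal to $\dom$. Under the contradiction hypothesis $\p_\nu h_{\v_n}\leq 0$ on $\p\omega_i$ (with $\nu$ outward to $\dom$, i.e. pointing into $\omega_i$). Passing to the $C^0$ (or at least weak) limit on $\p\omega_i$, using the $C^1$ convergence near the boundary, we get $\p_\nu h_0\leq 0$ on $\p\omega_i$. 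But then
\[
2\pi d_i=\int_{\p\omega_i}\p_\nu h_0\di\sigma\leq 0,
\]
where here $\nu$ is the outward normal to $\omega_i$, hence $-\nu$ outward to $\dom$; matching orientations carefully (the sign of $\p_\nu h$ flips), the contradiction hypothesis becomes $\p_\nu h_0\geq 0$ with $\nu$ outward to $\omega_i$ giving $2\pi d_i = \int_{\p\omega_i}\p_\nu h_0\,\di\sigma \geq 0$ — so one must instead exploit that the inequality is \emph{not strict anywhere} together with the fact that $h_0$ is a nonconstant harmonic function. The clean way: if $u_{\v_n}\times\p_\tau u_{\v_n}\le 0$ identically on $\p\omega_i$ for all $n$, then in the limit $\p_\tau$(argument of $u_0$) does not change sign on $\p\omega_i$, and combined with $\deg_{\p\omega_i}(u_0)=d_i$ this forces $\int_{\p\omega_i} u_0\times\p_\tau u_0\,\di\tau = 2\pi d_i$ to have the wrong sign unless $d_i\le 0$. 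Since $d_i>0$, contradiction.

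The main obstacle, as signalled above, is getting the orientations and signs exactly right and, more substantively, justifying the passage to the limit of the boundary quantity $u_{\v_n}\times\p_\tau u_{\v_n}$ on $\p\omega_i$. For this one needs that the $u_{\v_n}$ converge strongly enough \emph{up to the boundary} near $\p\omega_i$; the energy bound and Lemma \ref{L3.2} guarantee $1-|u_{\v_n}|^2=O(\v_n^2)$ uniformly on a fixed neighbourhood of $\p\dom$, so $u_{\v_n}/|u_{\v_n}|$ is well-defined there, and elliptic estimates (bootstrapping from the PDE with the boundary conditions (\ref{3.8'})) upgrade the $H^1$ weak convergence $u_{\v_n}\rightharpoonup u_0$ to $C^1$ convergence in that neighbourhood. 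One alternative that avoids pointwise boundary analysis: integrate against a suitable nonnegative tangential test function supported near $\p\omega_i$ and use the weak $H^{-1/2}$–$H^{1/2}$ duality defining the degree, but the $C^1$-convergence route is cleaner here given the a priori estimates already available. Once the limiting sign inequality $\,u_0\times\p_\tau u_0\le 0$ on $\p\omega_i$ is in hand, integrating and using $\deg_{\p\omega_i}(u_0)=d_i>0$ delivers the contradiction, proving the lemma.
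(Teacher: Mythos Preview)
Your overall strategy---contradiction, extraction of a subsequence via Proposition \ref{P3.conv}, and a sign argument on the limiting map $u_0\in E_\db$---matches the paper's. The genuine gap is the passage to the limit on the boundary. You invoke Lemma \ref{L3.2} to claim that $1-|u_{\v_n}|^2=\mathcal{O}(\v_n^2)$ uniformly on a fixed neighbourhood of $\p\dom$ and then bootstrap to $C^1$ convergence up to the boundary. But the estimates (\ref{3.10}) and (\ref{3.11}) are \emph{interior} estimates: they read $1-|u(x)|^2\leq C\v^2/\dist^2(x,\p\dom)$ and $|D^k u(x)|\leq C_k/\dist^k(x,\p\dom)$, and both blow up as $x\to\p\dom$. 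They give no uniform control near the boundary, and in this problem (degree rather than Dirichlet data) none is available: vortices can and do drift to $\p\dom$ as $\v\to 0$, which is precisely why the degree is not preserved under weak $H^1$ convergence and why the whole {\it approximate bulk degree} machinery is needed. So from $u_{\v_n}\times\p_\tau u_{\v_n}\le 0$ on $\p\omega_i$ you cannot conclude $u_0\times\p_\tau u_0\le 0$ there: part of the tangential derivative may concentrate into Dirac masses at the vortex impact points and disappear in the limit.

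The paper circumvents this lack of boundary compactness by working only with the boundary traces, which are $\mathbb{S}^1$-valued. Writing $u_{\v_n}=\e^{\imath\theta_n}$ on $\p\omega_i$ via an arclength parametrisation, the hypothesis $u_{\v_n}\times\p_\tau u_{\v_n}\le 0$ becomes $\theta_n$ monotone (nonincreasing), and the $\theta_n$ are uniformly bounded since the degrees $p_i$ are fixed. Helly's selection theorem then gives pointwise convergence $\theta_n\to\theta$ with $\theta$ still nonincreasing; the possible boundary concentration of vortices shows up as jump discontinuities of $\theta$. Continuity of $u_0$ forces each jump to lie in $2\pi\Z$, so the jump part $\theta^\delta$ contributes nothing to $\e^{\imath\theta}$, and $u_0=\e^{\imath\theta^c}$ with $\theta^c$ continuous and nonincreasing. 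Only then does $2\pi d_i=\theta^c(\text{end})-\theta^c(0)\le 0$ yield the contradiction with $d_i>0$. This monotone-lifting/Helly step is exactly what is missing from your argument; without it the limiting sign inequality is unjustified.
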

\begin{proof}
We prove existence of $x^0\in\p\O$ under appropriate assumptions. Existence of $x^i$ is similar.

We argue by contradiction. Assume that there are $\v_n\downarrow0$, $(u_n)\subset\Jp$ solutions of (\ref{3.8}) and (\ref{3.8'}) with $E_{\v_n}(u_n)\leq\Lambda$ s.t. $ u_n\times\p_\tau u_n\leq0$ on $\p\O$. 

Since $\d q=\frac{1}{2\pi}\int_{\p\O}u_n\times\p_\tau u_n$, we have $q\leq0$.

Up to subsequence, by Proposition \ref{P3.conv}, we can assume that
\[
u_n\rightarrow u_0\text{ a.e.}\text{ with} \: u_0\text{ the unique solution (up to $\mathbb{S}^1$) of }(\ref{3.17}).
\]

Let $x_0\in\p\O$ and let $\gamma:\p\O\rightarrow[0,\H^1(\p\O)[=:I$ be s.t. $\gamma^{-1}$ is the direct arc-length parametrization of $\p\O$ with the origin at $x_0$. 

We denote $\theta_n:I\rightarrow\R$ the smooth functions s.t.
\[
\left\{
\begin{array}{c}
u_n(x)=\e^{\imath\theta_n[\gamma(x)]}\:\forall\,x\in\p\O
\\
0\leq\theta_n(0)<2\pi
\end{array}  .
\right.
\]
Then, for all $n$, $\theta_n$ is nonincreasing and $\theta_n\in[\theta_n(0)+2\pi q,\theta_n(0)]\subset[2\pi q,2\pi]$.% for all $n$. 

Using Helly's selection theorem, up to subsequence, we can assume that $\theta_n\rightarrow\theta$ everywhere on $I$ with $\theta$ nonincreasing. Denote $\Xi$ the set of discontinuity points of $\theta$. Since $\theta$ is nonincreasing, $\Xi$ is a countable set.

Using the monotonicity of $\theta$, we can consider the following decomposition
\[
\theta=\theta^c+\theta^\delta,\text{ with }\theta^c\text{ and }\theta^\delta\text{ are nonincreasing functions}.
\]
$\theta^c$ is the continuous part of $\theta$ and $\theta^\delta $ is the jump function. The set of discontinuity points of $\theta^\delta$ is $\Xi$.

For $t\notin\Xi$, 
\[
\theta^\delta(t)=%\left(\theta(t)-\theta(t^-)\right)\delta_t+
\sum_{0<s<t,\,s\in\,\Xi}{\left\{\theta(s+)-\theta(s-)\right\}}.
\]

We obtain easily that $u_0(x)=\e^{\imath\theta[\gamma(x)]}$ a.e. $x\in\p\O$. Since $u_0$, $\theta_n$ and $\gamma$ have side limits at each points and $u_0=\e^{\imath\theta\circ\gamma}$ a.e., we find that
\[
u_0(x\pm)=\e^{\imath\theta[\gamma(x\pm)]}\text{ for {\bf each} }x\in\p\O.
\]
Using the continuity of $u_0$, we obtain $\e^{\imath\theta[\gamma(x+)]}=\e^{\imath\theta[\gamma(x-)]}\,\forall\,x\in\p\O$ which implies that 
\[
\theta[\gamma(x+)]-\theta[\gamma(x-)]\in2\pi\Z\:\forall\,x\in\p\O.
\]
For $t\notin\Xi$, 
\[
\theta^\delta(t)=\sum_{0<s<t,\,s\in\,\Xi}{\left\{\theta(s+)-\theta(s-)\right\}}\in2\pi\Z.
\]
 Then
\[
 u_0(x)\e^{-\imath\theta^c[\gamma(x)]}= \e^{\imath\theta^\delta[\gamma(x)]}=1\text{ a.e. }x\in\p\O.
\]
Finally, $u_0(x)=\e^{\imath\theta^c[\gamma(x)]}$ a.e. $x\in\p\O$, which is equivalent (using the continuity of the functions) at $u_0=\e^{\imath\theta^c\circ\gamma}$.

We have a contradiction observing that 
\[
0<2\pi\deg_{\p\O}(u_0)=2\pi d=\theta^c(\H^1(\p\O))-\theta^c(0)
\]
and using the fact that $\theta^c$ is nonincreasing.
\end{proof}

\section{Proof of Lemma \ref{L3.15soft}}\label{3.sectionPREUVE}

We prove only the part of the lemma concerning $\p\O$. The proof for the other connected components of $\p \dom$ is similar. 

For reader's convenience, we state the part of Lemma \ref{L3.15soft} that we will actually prove

\begin{lemimnoi}
Let $u\in \J_{{\bf p},q}$ be a solution of (\ref{3.8}) and (\ref{3.8'}). 

Assume that 
\begin{equation}\tag{\ref{3.deg}}
\ab_j(u)\in(d_j-\frac{1}{3},d_j+\frac{1}{3}),\:\:\forall\,j\in\N_N
\end{equation}
and that there is some point $x^0\in\p\O$ s.t. $u\times\p_\tau u(x^0)>0$. 

Then there is $\tilde{u}\in\J_{({\bf p},q-1)}$ s.t. 
\begin{equation}\no
E_\v(\tilde{u})<E_\v(u)+\pi,
\end{equation}
\[
\ab_j(\tilde{u})\in(d_j-\frac{1}{3},d_j+\frac{1}{3}),\,\forall\, j\in\N_N.
\]

\end{lemimnoi}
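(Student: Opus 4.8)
The plan is to build $\tilde u$ by a purely local surgery near the boundary point $x^0\in\p\O$, exploiting the hypothesis $u\times\p_\tau u(x^0)>0$. Since $u$ is a solution of \eqref{3.8}, \eqref{3.8'}, Lemma \ref{L3.2} gives pointwise control of $|1-|u|^2|$ and of $|Du|$ in terms of $\dist(\cdot,\p\dom)$; in particular, for $\v$ small, $u$ does not vanish in a fixed half-disc $G=\dom\cap D(x^0,r_0)$ away from $\p\O$, so we may write $u=\rho v=\rho\e^{\imath\theta}$ there with $\n\theta$ smooth and globally defined in $\overline G$, and $|\n\theta|\le C/\dist(\cdot,\p\dom)$. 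The positivity $u\times\p_\tau u(x^0)=\p_\tau\theta(x^0)>0$ says that the phase is \emph{increasing} along $\p\O$ at $x^0$; by continuity it is increasing on a small boundary arc $\Gamma_0\subset\p\O$ around $x^0$. The idea is to insert, inside a tiny half-disc $B=D(x^0,\ell)\cap\dom$ with $\v\ll\ell\ll r_0$, a test map that removes one unit of degree from $\p\O$ at a cost strictly less than $\pi$.

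The key step is the construction of $w\in H^1(B,\C)$ with $|w|=1$ on $\p B\cap\dom$ (so that $\rho w$ glues with $u$ outside $B$ into a map in $\J$), with $|w|$ matching $|v|=1$ on the relevant part so that Lemma \ref{L3.16} applies, and such that going around $\p B$ the map $vw$ has one less winding than $v$ on the portion lying on $\p\O$. Concretely, on the flat model (half-disc of radius $\ell$, diameter on $\p\O$), one takes $w$ of the form $\e^{\imath\psi}$ times a small core profile, where $\psi$ interpolates between $0$ on the curved part $\p B\cap\dom$ and a phase that cancels one period of $\theta$ along the diameter. Because $\p_\tau\theta>0$ on $\Gamma_0$, the new boundary phase $\theta+\psi$ can be arranged to be \emph{monotone decreasing} by exactly $2\pi$ less total variation on $\Gamma_0$, which is what forces $\deg_{\p\O}(\tilde u)=q-1$ while keeping $\deg_{\p\o_j}(\tilde u)=p_j$ untouched (the surgery is supported away from the other components). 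The energy bookkeeping: by Lemma \ref{L3.16}, $E_\v(\rho w,B)=E_\v(u,B)+L_\v(w,B)$, and one must show $L_\v(w,B)<\pi$; one also uses $E_\v(u,B)\to0$-type smallness (from Lemma \ref{L3.2}, $E_\v(u,B)\le C\ell$ if $\ell$ is chosen, say, $\ell=\ell(\v)\to0$ slowly) so that the dominant term is a standard degree-one vortex-type cost on an annulus/half-disc, which is $\pi\log(\ell/\v)\cdot(\text{something})$ — so instead one uses the ``dipole'' construction of Lemma \ref{P3.bougedegmult}/Lemma \ref{L3.bougedegfinal}: a vortex of degree $-1$ paired against the boundary, whose cost is $\pi+\eta$, and then one \emph{gains} a strictly negative contribution from the term $-\tfrac12\int_B|w|^2\rho^2|\n\theta|^2$ together with the interaction of the inserted vortex with the existing monotone phase $\theta$ (this is where $\p_\tau\theta(x^0)>0$ is used quantitatively: placing the new $-1$ vortex ``downstream'' of the increasing phase lowers the energy), yielding the \emph{strict} inequality $E_\v(\tilde u)<E_\v(u)+\pi$.

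After the local modification I would check the two remaining requirements. First, $\tilde u\in\J_{(\bfp,q-1)}$: the degrees on $\p\o_j$ ($j\in\N_N$) are unchanged because $B$ is disjoint from $\overline{\o_j}$, and $\deg_{\p\O}$ drops by exactly one by the winding computation above; also $|\tilde u|=1$ on $\p\dom$ since on $\Gamma_0$ we kept $|\rho w|=\rho|v|=1$ by arranging $|w|=1$ there, and off $\Gamma_0$ we did not touch $u$. Second, the approximate bulk degree constraint \eqref{3.deg} is preserved: by Proposition \ref{P3.1}(2), $|\ab_j(\tilde u)-\ab_j(u)|\le \tfrac{2}{\pi}\|V_j\|_{C^1(\dom)}\Lambda^{1/2}\|\tilde u-u\|_{L^2(\dom)}$, and $\|\tilde u-u\|_{L^2}\le C\,(\text{area of }B)^{1/2}\le C\ell\to0$; choosing $\ell$ (hence $\v$) small enough makes this perturbation smaller than $\min_j\operatorname{dist}(\ab_j(u),\{d_j\pm\tfrac13\})>0$, so $\ab_j(\tilde u)$ stays in $(d_j-\tfrac13,d_j+\tfrac13)$. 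Uniformity of $\Lambda$ along the way is harmless because we only ever apply this to a single fixed solution $u$, whose energy $E_\v(u)$ serves as $\Lambda$.

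\textbf{Main obstacle.} The delicate point is getting the \emph{strict} inequality $E_\v(\tilde u)<E_\v(u)+\pi$ rather than $\le E_\v(u)+\pi+\eta$: the soft dipole estimate of Lemma \ref{L3.bougedegfinal} only gives the latter. The strictness must come from genuinely using the hypothesis $u\times\p_\tau u(x^0)>0$, i.e.\ from the fact that the ambient phase gradient $\n\theta$ has a definite sign against which the inserted vortex can be oriented so as to produce an $O(1)$ negative cross term $-\int_B |w|^2\rho^2 \n\theta\cdot(\text{vortex current})$ that beats the (arbitrarily small) positive error terms. Making this cross term rigorously negative and bounded below requires carefully choosing the vortex location and the cutoff scales (the three scales $\v\ll\text{core}\ll\ell$) and controlling the boundary behaviour of $\n\theta$ via Lemma \ref{L3.2}; this is precisely the ``local construction of test functions with energy control'' advertised in the introduction, and is where the real work of section \ref{3.sectionPREUVE} lies.
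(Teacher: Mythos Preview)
Your outline captures the correct global structure: localise near $x^0$, write $u=\rho\e^{\imath\theta}$ there, build a test map $\tilde u=\rho w$ that agrees with $u$ outside a small set, invoke Lemma \ref{L3.16} to reduce to showing $L_\v(w,\cdot)<\pi$, and finally control $\ab_j(\tilde u)$ via Proposition \ref{P3.1}(2) and the smallness of the support of $\tilde u-u$. That much is right.

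The genuine gap is the heart of the matter, and you acknowledge it yourself: you do not construct $w$ and you do not prove $L_\v(w,\cdot)<\pi$. Your proposed mechanism---insert a degree $-1$ vortex and harvest a strictly negative cross term $-\int|w|^2\rho^2\n\theta\cdot(\text{vortex current})$ by placing it ``downstream''---is a plausible heuristic, but it is not what the paper does, and it is not at all clear it can be made to work. A degree $-1$ vortex of core size $\v$ in a half-disc of radius $\ell$ costs of order $\pi\ln(\ell/\v)$, not $\pi+\eta$; what costs $\pi+\eta$ in Lemma \ref{L3.bougedegfinal} is a \emph{boundary} Möbius bubble, not an interior vortex, and for that object there is no obvious ``current'' to pair against $\n\theta$. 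So the cross-term story, as stated, does not close.

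The paper's actual argument is quite different in its fine structure. It does not work in a Euclidean half-disc with a scale $\ell$. Instead, it uses the solution-adapted coordinates $(\theta,h)$, where $h$ is the stream function of \eqref{3.42}; the hypothesis $u\times\p_\tau u(x^0)>0$ is precisely $\p_\nu h(x^0)>0$, which makes $\Theta=(\theta,h)$ a local diffeomorphism onto a rectangle $(-2\delta,2\delta)\times(1-\delta,1)$ with Jacobian exactly $\rho^2|\n\theta|^2$. On the boundary piece $\{h=1\}$ the test factor $\psi_t$ is taken to be the degree $-1$ Möbius map $\e^{-\imath\theta}\mapsto\frac{\e^{-\imath\theta}-(1-t\varphi)}{(1-t\varphi)\e^{-\imath\theta}-1}$ (cut off by $\varphi$), and it is extended into the rectangle mode-by-mode via explicit profiles $f_k(h)$ chosen as solutions of an ODE. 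The Jacobian identity turns $L_\v$ into a quadratic form in the Fourier coefficients $b_k(t)$, and an explicit (long) computation with power series yields $M_\lambda(w_t,D_\delta)\le\pi-2\delta t+o(t)<\pi$ for small $t$. The strict gain $-2\delta t$ comes from the algebra of these series, not from a geometric cross-term argument; in particular $\delta$ is fixed and it is $t\to0$ (not a spatial scale) that drives the construction.

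In short: your framework is right, but the decisive step---the explicit test function in $(\theta,h)$ coordinates and the Fourier/power-series estimate leading to \eqref{3.resultat.final}---is entirely missing, and the heuristic you substitute for it does not obviously yield strictness.
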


\subsection{Decomposition of $\dom$}

By hypothesis, there is some $x^0\in\p\O$ s.t.  $\p_\nu h(x^0)>0$. Without loss of generality, we may assume that $u(x^0)=1$.

Then there is $\Upsilon\subset \overline{\dom}$, a compact neighbourhood of $x^0$, simply connected and with nonempty interior,  s.t.:
\begin{itemize}
\item $\gamma:=\p\O\cap\p\Upsilon$ is connected with nonempty interior;
\item $x^0$ is an interior point of $\gamma$;
\item $|\n h|>0$, $\rho>0$, $h\leq1$ in $\Upsilon$;
\item $\p_\nu h > 0$ on $\gamma$ ($\nu$ the outward normal to $\O$).
\end{itemize}

It follows that, in $\Upsilon$, $\theta$, a lifting of $u/|u|$ is globally defined (we take the determination of $\theta$ which vanishes at $x^0$) .

Using the inverse function theorem, we may assume, by further restricting $\Upsilon$, that there are some $0<\eta,\delta<1$ s.t. 
\[
\Upsilon=\{x\in \dom\text{ s.t. } \dist(x,x^0)<\eta,\,1-\delta\leq h(x)\leq1,\,-2\delta\leq\theta(x)\leq2\delta\}.
\]

We may further assume that, by replacing $\delta$ by smaller value if necessary and denoting $D_\delta:=\stackrel{\circ}{\Upsilon}$ (see Figure \ref{F3.Fig.LocalMin}), we have
\begin{enumerate}[(i)]\item $\begin{array}{ccccc}\Theta:=(\theta,h)_{|D_\delta}:&D_\delta&\rightarrow&(-2\delta,2\delta)\times(1-\delta,1)&\text{ is a }C^1\text{-diffeomorphism},\\&x&\mapsto&(\theta,h)&\end{array}$
\item $\p D_\delta\setminus(\{h=1\}\cup\{h=1-\delta\})=\p D_\delta\cap(\{\theta=-2\delta\}\cup\{\theta=2\delta\})$,
\item $D_\delta$  is a Lipschitz domain.%pas forcément régulier vu que sa frontière est \emph{a priori} continue et régulière par morceaux.
\end{enumerate}
We consider $\delta_0>0$ s.t. for $\delta<\delta_0$, $D_\delta$ satisfies previous properties and
\begin{equation}\label{3.83}
|D_\delta|^{1/2}<\frac{\pi\left|\|\ab(u)-{\bf d}\|_{\infty}-\frac{1}{3}\right|}{6\max_i\|V_i\|_{C^1(\dom)}\left(E_\v(u)+\pi\right)^{1/2}}.
\end{equation}
Using Proposition \ref{P3.1} and (\ref{3.83}), if $v\in H^1(\dom,\C)$ satisfies $u=v$ in $\dom\setminus D_\delta$, $|v|\leq2$ in $\dom$ and $E_\v(v)<E_\v(u)+\pi$, then we have $\ab_i(v)\in(d_i-1/3,d_i+1/3)$. %{\tt Pour obtenir ce résultat une borne d'énergie aussi fine que celle du lemme \ref{L3.15} n'est pas nécessaire.}
\\We let $\delta<\delta_0$ and we denote
\begin{eqnarray*}
D_\delta':=\Theta^{-1}\left[(-\delta,\delta)\times(1-\delta,1)\right],
\\
{D_{\delta}^{-}}:=\Theta^{-1}\left[(-2\delta,-\delta)\times(1-\delta,1)\right],
\\
{D_{\delta}^{+}}:=\Theta^{-1}\left[(\delta,2\delta)\times(1-\delta,1)\right],
\end{eqnarray*}
so that $D_\delta'$, ${D_{\delta}^{-}}$ and ${D_{\delta}^{+}}$ are Lipschitz domains (see Figure \ref{F3.Fig.LocalMin}). 
 \begin{figure}[h]

\psset{xunit=1.5cm,yunit=1.5cm,algebraic=true,dotstyle=o,dotsize=3pt 0,linewidth=0.8pt,arrowsize=3pt 2,arrowinset=0.25}
\begin{pspicture*}(0.7,-1.6)(10.8,6.9)
\rput{-15.64}(5.61,1.89){\psellipse[fillcolor=lightgray,fillstyle=solid](0,0)(4.81,3.31)}
\rput{-29.37}(5.01,0.07){\psellipse[fillcolor=gray,fillstyle=solid](0,0)(0.52,0.25)}
\rput{36.05}(8.11,0.19){\psellipse[fillcolor= gray,fillstyle=solid](0,0)(0.71,0.38)}
\rput{30.85}(2.73,2.85){\psellipse[fillcolor= gray,fillstyle=solid](0,0)(0.67,0.41)}
\rput{-4.58}(7.13,2.23){\psellipse[fillcolor= gray,fillstyle=solid](0,0)(1.12,0.45)}
\psline[linestyle=dashed,dash=4pt 4pt](3.3,5.17)(3.59,4.06)
\psline[linestyle=dashed,dash=4pt 4pt](6.63,5.05)(6.27,3.83)
\psline[linecolor=red](4.21,4.18)(4.1,5.31)
\psline[linecolor=red](5.86,5.23)(5.65,4.06)
\psarc[linecolor=red](4.7,-1.08){9.63}{79.55}{95.34}
\psarc[linecolor=red](4.61,0.26){5.91}{74.63}{95.74}
\psarc[linestyle=dashed,dash=4pt 4pt](4.61,0.08){7.88}{95.52}{104.42}
\psarc[linestyle=dashed,dash=4pt 4pt](4.61,0.26){5.91}{95.74}{105.03}
\psarc[linestyle=dashed,dash=4pt 4pt](4.61,0.26){5.91}{65.11}{74.63}
\psarc[linestyle=dashed,dash=4pt 4pt](4.62,-1.63){10.46}{73.26}{79.74}
\rput[tl](3.67,5){$D^+_\delta$}
\rput[tl](4.8,5.0){$D'_\delta$}
\rput[tl](5.9,5){$D^-_\delta$}
\rput[tl](9.67,3.65){$h=1-\delta$}
\rput[tl](9.67,6.65){$h=1$}
\rput[tl](6.23,4.4){\scriptsize$\theta=\!-2\delta$}
\rput[tl](3.18,4.4){\scriptsize$\theta=2\delta$}
\rput[lt](3.9,4.50){\parbox{3.01 cm}{ \scriptsize$\theta=\delta$}}
\rput[tl](5.4,4.50){\scriptsize $ \theta=\!-\delta$}
\rput[bl](4.8,0){$\omega_4$}
\rput[bl](7.9,0.07){$\omega_3$}
\rput[bl](2.6,2.79){$\omega_1$}
\rput[bl](6.9,2.2){$\omega_2$}
\rput[tl](9.67,2.8){$\Omega$}
\rput[tl](4.8,1.5){$\dom=\Omega\setminus\cup_i\overline{\omega_i}$}
\rput[bl](5.04,5.35){\darkgray{$ x_0$}}
\rput[bl](4.95,5.27){$\bullet$}
%%%%%%%%%%%%%%%%%
\psline(3.92,4.14)(4.98,3.55)
\psline(4.95,4.18)(4.98,3.55)
\psline(5.96,3.96)(4.98,3.55)
%%%%%%%%%%%%%%%%
\psline(3.65,5.25)(4.56,6.55)
\psline(4.61,5.34)(4.56,6.55)
\psline(6.08,5.19)(4.56,6.55)
%%%%%%%%%%%%%%%%%%
\psline(4.56,6.55)(9.53,6.55)
\psline(4.98,3.55)(9.53,3.55)
%\psdots[dotsize=1pt 0,linecolor=darkgray](5,5.33)
\end{pspicture*}\caption{Decomposition of $\dom$}\label{F3.Fig.LocalMin}
\end{figure}

\subsection{Construction of the test function}
%Remarquons aussi que ${\e^{\imath\theta}}_{|\p\O}:\p\O\rightarrow \mathbb{S}^1$ est globalement définie et $\deg_{\p\O}( \e^{\imath\theta})=1$.

We consider an application (with unknown expression in $D_\delta$) $\psi_t:\dom\rightarrow\C$ ($t>0$ smaller than $\delta$) s.t.
\begin{equation}\label{3.TEST}
\psi_t(x)=\left\{\begin{array}{cl}\d1&\text{in }\dom\setminus D_\delta
%\\
%\d1&\text{sur }\Gamma_\delta\,;
\\
\d\frac{\e^{-\imath\theta}-(1-t\varphi(\theta))}{\e^{-\imath\theta}(1-t\varphi(\theta))-1}&\text{on }\p\O\cap\p D_\delta
%\\
%1&\text{sur }\p\O\setminus\p D_\delta.
\end{array}\right.,
\end{equation}
with $0\leq\varphi\leq1$ a smooth, even and $2\pi$-periodic function  satisfying 
\[
\varphi_{|(-\delta/2,\delta/2)}\equiv1\text{ and }\varphi_{|[-\pi,\pi[\setminus(-\delta,\delta)}\equiv0.
\]
It is clear that $\psi_{t|\p \dom}\in C^\infty(\p \dom)$ and
\begin{equation}\label{3.degint0}
\deg_{\p\omega_i}(\psi_t)=0\text{ for all }i\in\N_N.
\end{equation}

Expanding in Fourier series, we have
\begin{equation}\label{3.decdeM}
\frac{\e^{-\imath\theta}-(1-t\varphi(\theta))}{\e^{-\imath\theta}(1-t\varphi(\theta))-1}=(1-tb_{-1}(t))+t\sum_{k\neq-1}{b_k(t)\e^{-(k+1)\imath\theta}}.
\end{equation}
Noting that the real part of $\d\frac{\e^{-\imath\theta}-(1-t\varphi(\theta))}{\e^{-\imath\theta}(1-t\varphi(\theta))-1}$ is even and the imaginary part is odd, we obtain that $b_k(t)\in\R$ for all $k,t$.

The following lemma is proven in Appendix \ref{3.Apres}
\begin{lem}\label{L3.18}
We denote, for $\e^{\imath\theta}\in \mathbb{S}^1$,
\[
\Psi_t(\e^{\imath\theta})=\frac{\e^{-\imath\theta}-(1-t\varphi(\theta))}{\e^{-\imath\theta}(1-t\varphi(\theta))-1}\phantom{aaaa}\text{ and }\phantom{aaaa}\F_t(\e^{\imath\theta})=\frac{\e^{-\imath\theta}-(1-t)}{\e^{-\imath\theta}(1-t)-1}.
\]Then:
\begin{enumerate}[1)]
\item $|\Psi_t-\F_t|\leq C_\delta\, t\text{ on }\mathbb{S}^1$;
\item $\d \F_t(z)=\frac{\overline{z}-(1-t)}{\overline{z}(1-t)-1}=(1-tc_{-1})+t\sum_{k\neq-1}{c_k(t)\overline{z}^{\,k+1}}$, with 
\[c_k=
\begin{cases} (t-2)(1-t)^k & \text{if $k\geq0$}
\\
0 &\text{if $k\leq-2$}
\\
1&\text{if }k=-1
\end{cases};
\]
\item $|b_k(t)-c_k(t)|\leq C(n,\delta)\left(1+|k|\right)^{-n},\:\forall\,n>0$ with $C(n,\delta)$ independent of $t$ sufficiently small.
%\item  Quitte à considérer $\tilde{b_k}:=-b_k$ à la place de $b_k$, 
%\begin{equation}\label{3.changement}
%M_\lambda(w_t)\leq \delta-2\delta t+2t^2\sum_{\substack{k, l\geq0 \\ k-l > 0 }}{c_kc_l\frac{\sin[(k-l)\delta]}{k-l} \frac{kl}{k+l}}+o(t)
%\end{equation}
\end{enumerate}
\end{lem}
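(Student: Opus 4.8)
The plan is to treat each of the three items in turn, going from the explicit Möbius map $\F_t$ to the ``twisted'' map $\Psi_t$ via a perturbation argument controlled by the cutoff $\varphi$.

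\textbf{Item 2) (explicit Fourier coefficients of $\F_t$).} This is a direct computation. I would write $w=\overline z=\e^{-\imath\theta}$ and compute
\[
\F_t=\frac{w-(1-t)}{(1-t)w-1}=-\frac{(1-t)w-1+(1-t)^2w-(1-t)w}{(1-t)w-1}
\]
or, more simply, perform a long division: since $|1-t|<1$ for small $t>0$ and $|w|=1$,
\[
\F_t(w)=\frac{w-(1-t)}{(1-t)w-1}
=\bigl(w-(1-t)\bigr)\cdot\frac{-1}{1-(1-t)w}
=-\bigl(w-(1-t)\bigr)\sum_{j\ge0}(1-t)^jw^j .
\]
Collecting powers of $w$: the constant term is $+(1-t)$; wait — I must be careful with signs. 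Expanding gives $-\sum_{j\ge0}(1-t)^jw^{j+1}+\sum_{j\ge0}(1-t)^{j+1}w^j$, so the coefficient of $w^0$ is $(1-t)$, and for $m\ge1$ the coefficient of $w^m$ is $(1-t)^{m+1}-(1-t)^{m-1}=(1-t)^{m-1}\bigl((1-t)^2-1\bigr)=(1-t)^{m-1}(t^2-2t)=t(t-2)(1-t)^{m-1}$. Matching with the claimed form $\F_t=(1-tc_{-1})+t\sum_{k\ne-1}c_k(t)w^{k+1}$: the $w^0$ term corresponds to $k=-1$, giving $1-tc_{-1}=1-t$, i.e. $c_{-1}=1$; the $w^{k+1}$ term for $k\ge0$ has coefficient $tc_k$, so $tc_k=t(t-2)(1-t)^{k}$, i.e. $c_k=(t-2)(1-t)^k$; and there are no negative powers of $w$, so $c_k=0$ for $k\le-2$. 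This matches the statement exactly. (One should note $\F_t(w)$ is holomorphic in $w$ on a neighbourhood of $\overline{\mathbb D}$, which is why only nonnegative powers of $w=\overline z$ appear.)

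\textbf{Item 1) (pointwise bound $|\Psi_t-\F_t|\le C_\delta t$).} Both maps have the form $B(a)=\dfrac{w-a}{aw-1}$ with $w=\e^{-\imath\theta}$, evaluated at $a=1-t\varphi(\theta)$ for $\Psi_t$ and at $a=1-t$ for $\F_t$; note $a\in[1-t,1]$ in both cases. The map $a\mapsto B(a)=\frac{w-a}{aw-1}$ has derivative $\partial_a B=\dfrac{-(aw-1)-w(w-a)}{(aw-1)^2}=\dfrac{1-|w|^2}{(aw-1)^2}\cdot(\text{here }|w|=1)$, hmm — actually $\partial_aB=\frac{-(aw-1)-(w-a)w}{(aw-1)^2}=\frac{1-aw-w^2+aw}{(aw-1)^2}=\frac{1-w^2}{(aw-1)^2}$. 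Since $|w|=1$ this numerator is bounded by $2$, and the denominator satisfies $|aw-1|\ge 1-a\ge 0$, which degenerates as $a\to1$, $w\to1$ — so a crude bound fails. The fix is to use that the cutoff $\varphi$ forces $1-t\varphi(\theta)$ and $1-t$ to agree wherever they are close to $1$: on $\{|\theta|<\delta/2\}$ we have $\varphi(\theta)=1$ so $\Psi_t=\F_t$ exactly; on $\{|\theta|\ge\delta/2\}$ we have $|aw-1|\ge|w-1|-|1-a|\ge |\e^{-\imath\theta}-1|-t\ge c_\delta-t\ge c_\delta/2$ for $t$ small, where $c_\delta=|\e^{-\imath\delta/2}-1|>0$. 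On this region the mean value inequality gives $|\Psi_t-\F_t|\le \sup|\partial_aB|\cdot|t\varphi(\theta)-t|\le \frac{2}{(c_\delta/2)^2}\cdot t$. Combining the two regions yields $C_\delta=8/c_\delta^2$ (or similar).

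\textbf{Item 3) (rapid decay of $b_k-c_k$).} This is the key and most delicate step: I want to show $|b_k(t)-c_k(t)|\le C(n,\delta)(1+|k|)^{-n}$ for every $n$, uniformly in small $t$. The natural approach is to realize $b_k(t)$ and $c_k(t)$ as Fourier coefficients of the $2\pi$-periodic functions $\theta\mapsto\Psi_t(\e^{\imath\theta})$ and $\theta\mapsto\F_t(\e^{\imath\theta})$ respectively (after factoring out the common leading behaviour); precisely, from \eqref{3.decdeM} and item 2), $tb_k(t)$ is the coefficient of $\e^{-(k+1)\imath\theta}$ in $\Psi_t(\e^{\imath\theta})$ and likewise $tc_k(t)$ for $\F_t$. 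Hence $t(b_k-c_k)$ is the $(k+1)$-st Fourier coefficient of the difference $g_t(\theta):=\Psi_t(\e^{\imath\theta})-\F_t(\e^{\imath\theta})$. The decisive observation is that $g_t$ is a \emph{smooth} $2\pi$-periodic function of $\theta$: $\Psi_t$ and $\F_t$ are both smooth in $\theta$ (the denominators $\e^{-\imath\theta}(1-t\varphi(\theta))-1$ and $\e^{-\imath\theta}(1-t)-1$ never vanish, since their moduli are $\ge 1-(1-t\varphi)\ge 0$ — again the degeneracy is only at $\theta=0$, but there $\varphi\equiv1$ so $g_t\equiv0$ near $\theta=0$ and in particular $g_t$ is smooth there). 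Moreover $g_t$ is supported in $\{\delta/2\le|\theta|\le\delta\}\pmod{2\pi}$ (on $|\theta|<\delta/2$, $\varphi=1$ so $g_t=0$; on $|\theta|\ge\delta$, $\varphi=0$ so again $\Psi_t=\F_t$ and $g_t=0$). Thus $g_t$ is a smooth function, compactly supported away from the bad point, and I must track the $t$-dependence of its derivatives. Differentiating the explicit formula $j$ times in $\theta$ and using that on the support $|\e^{-\imath\theta}(1-t\varphi)-1|\ge c_\delta/2$ and $|\e^{-\imath\theta}(1-t)-1|\ge c_\delta/2$, together with $\|\varphi\|_{C^j}\le C_{j,\delta}$ and $t\le1$, one gets $\|g_t\|_{C^j}\le C(j,\delta)\,t$ uniformly in small $t>0$ — the extra factor $t$ coming from the fact that $g_t$ vanishes identically when $t=0$ and depends smoothly (in fact real-analytically) on $t$, so $g_t=t\,\partial_t g_t|_{t=0}+O(t^2)$ with all constants controlled on the fixed annular support. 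Then standard Fourier analysis gives $|t(b_k-c_k)|=|\widehat{g_t}(k+1)|\le \|g_t^{(n)}\|_{L^\infty}/(1+|k+1|)^{n}\le C(n,\delta)\,t\,(1+|k|)^{-n}$, and dividing by $t$ yields the claim. I expect this last step — getting the uniform-in-$t$ smooth bounds on $g_t$ with the gain of one power of $t$, while correctly handling the (removable) singularity at $\theta=0$ through the cutoff — to be the main technical obstacle; everything else is bookkeeping.
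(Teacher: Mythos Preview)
Your approach matches the paper's. For 1) the paper computes the difference explicitly as
\[
\frac{\Psi_t(\overline{z})-\F_t(\overline{z})}{t}=\frac{(1-\varphi(\theta))(1-z^2)}{[z(1-t)-1][z(1-t\varphi(\theta))-1]},
\]
and bounds numerator and denominator separately on $\{|\theta|>\delta/2\}$ (the expression vanishes on $\{|\theta|\le\delta/2\}$); your mean-value argument is an equivalent packaging of the same estimate. For 3) the paper likewise bounds $\|\partial_\theta^{\,n}(\Psi_t-\F_t)\|_{L^\infty}\le C(n,\delta)\,t$ by writing $\partial_\theta^{\,n}[(\Psi_t-\F_t)/t]=A_n/B_n$ with $|A_n|\le M_n$, $|B_n|\ge m_n$ on $\{|\theta|>\delta/2\}$, and then invokes Fourier decay---exactly your plan.

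One slip to correct: your claim that $g_t=\Psi_t-\F_t$ is supported in $\{\delta/2\le|\theta|\le\delta\}$ is false. On $\{|\theta|\ge\delta\}$ one has $\varphi\equiv0$, hence $\Psi_t\equiv1$, but $\F_t\not\equiv1$ there; in fact $1-\F_t=\dfrac{-t(w+1)}{(1-t)w-1}$ with $w=\overline z$. This does not damage the argument: what you actually use is only that $g_t\equiv0$ on $\{|\theta|<\delta/2\}$, so that all $C^j$ estimates take place on $\{|\theta|\ge\delta/2\}$, where both denominators are bounded below by a constant depending only on $\delta$, uniformly in small $t$. Simply drop the erroneous support assertion and the rest of your proof goes through unchanged.
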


It is easy to see using Lemma \ref{L3.18} that, for $t$ sufficiently small, 
\[
\deg_{\mathbb{S}^1}(\Psi_t)=\deg_{\mathbb{S}^1}(\mathcal{F}_t)=-1.
\]
Using the previous equality and the fact that $\p_\tau\theta>0$ on $\gamma$, we find that
\begin{equation}\label{3.degext0}
\deg_{\p\O}(\psi_t)=-1.
\end{equation}
%{\tt Je ne vois pas dans les calculs qui vont suivre une différence notoire entre le fait de considérer $\psi_t$ ou $\overline{\psi_t}$}

It will be convenient to use $h$ and $\theta$ as a shorthand for $h(x)$ and $\theta(x)$. With these notations, we will look for $\psi_t$ of the form
\begin{eqnarray}\no
\psi_t(x)&=&\tilde{\psi_t}(h,\theta)
\\\label{3.59}
&=&\left\{\begin{array}{cl}\d(1-tf_{-1}(h)b_{-1}(t))+t\sum_{k\neq-1}{b_k(t)f_k(h)\e^{-(k+1)\imath\theta}}&\text{in }D_\delta'
\\
\d\frac{\theta-\delta}{\delta}+\tilde{\psi_t}(h,\delta)\frac{2\delta-\theta}{\delta}&\text{in }{D_{\delta}^{+}}
\\
\d-\frac{\theta+\delta}{\delta}+\tilde{\psi_t}(h,-\delta)\frac{2\delta+\theta}{\delta}&\text{in }{D_{\delta}^{-}}
\end{array}\right..
\end{eqnarray}%avec $\zeta_t\in\mathcal{D}(\R,\R)$, tel que 
%\[\left\{\begin{array}{l}
%\text{Supp}(\zeta_t)\subset(-\delta-Ct^{\beta},\delta+Ct^{\beta}),\,0<\beta<1/2,
%\\
%\zeta\equiv1\text{ dans }(-\delta,\delta),\,|\zeta|\leq1,\,\zeta_t'=\mathcal{O}(t^{-\beta}). 
%\end{array}\right.\]
We impose $f_k(1-\delta)=0$ and $f_k(1)=1$ for $k\in\Z$.
%Par suite $\deg_{\p\O}(\psi_t)=-1$ pour $0<t<1$.

Our aim is to show that for $t>0$ small and appropriate $f_k$'s, the function $\psi_t$ defined by (\ref{3.59}) satisfies (\ref{3.TEST})  and
\begin{equation}\label{3.etoile}
L_\v(\psi_t \e^{\imath \theta},D_\delta)<\pi.
\end{equation}
Here, $L_\v$ is the functional defined in Lemma \ref{L3.16}, so that 
\[
E_\v(\rho \psi_t \e^{\imath \theta},D_\delta)=E_\v(u,D_\delta)+L_\v(\psi_t \e^{\imath \theta},D_\delta).
\]

Then, considering 
\[
\underline{\psi_t}=\begin{cases}\psi_t &\text{if }|\psi_t|\leq2\\\d2\frac{\psi_t}{|\psi_t|}&\text{if }|\psi_t|>2\end{cases}
\]
and setting
\[
\tilde{u}=\left\{\begin{array}{cl}\rho w_t=\underline{\psi_t} u&\text{in }D_\delta\\u&\text{in }\dom\setminus D_\delta\end{array}\right.,
\]
in view of \eqref{3.etoile}, it is straightforward that $\tilde{u}$ satisfies the conclusion of Lemma \ref{L3.15soft}.

\subsection{Upper bound for $L^{}_\v(\cdot,D_\delta)$. An auxiliary problem}
If we let $\tilde{w}:[1-\delta,1]\times[-2\delta,2\delta]$ be s.t. $\tilde{w}(h(x),\theta(x)):=w(x)$, then we have 
\begin{eqnarray*}
|\n w|^2=\sum_i{|\p_i w|^2}&=&\sum_i{|\p_h \tw(h,\theta)\,\p_ih+\p_\theta \tw(h,\theta)\,\p_i\theta|^2}
\\&=&(\rho^4|\p_h \tw(h,\theta)|^2+|\p_\theta \tw(h,\theta)|^2)|\n\theta|^2.
\end{eqnarray*}
%Alors, en faisant le changement de variable donnée par $(\theta,h)$ de $D'_\delta$ dans $\Pi_\delta$, on a
%\[
%\int_{D'_\delta}{\rho^2|\n w|^2\di x}=\frac{1}{d}\int_{\Pi_\delta}{(\rt^4|\p_h\tw|^2d^2+|\p_\theta\tw|^2)\di\theta\di h},
%\]
%\[
%\int_{D'_\delta}{\rho^4(1-|w|^2)^2\di x}=\frac{1}{d}\int_{\Pi_\delta}{\rt^2(1-|\tw|^2)^2\frac{\di\theta\di h}{|\wtt|^2}},
%\]
%\[
%\frac{d^2}{2}\int_{D'_\delta}{|w|^2\rho^2|\n\theta|^2\di x}=\frac{d}{2}\int_{\Pi_\delta}{|\tw|^2\di\theta\di h}.
%\]
Therefore,
\begin{eqnarray}\no
L_\v^{}(w,D_\delta)&=&\frac{1}{2}\int_{D_\delta}\Big\{\left(\rho^4|\p_h \tw(h,\theta)|^2+|\p_\theta \tw(h,\theta)|^2-|\tw(h,\theta)|^2\right)\rho^2|\n\theta|^2+
\\\nonumber
&&\phantom{AAAAAAAAAAAAAAA}+\frac{1}{2\v^2}\rho^4(1-|\tw(h,\theta)|^2)^2\Big\}\di x
\\\nonumber
&\leq&\frac{1}{2}\int_{D_\delta}\Big\{|\p_h \tw(h,\theta)|^2+|\p_\theta \tw(h,\theta)|^2-|\tw(h,\theta)|^2+
\\\label{3.60}&&\phantom{AAAAAAAAAAAAAAA}+\lambda|\e^{\imath \theta}-\tw(h,\theta)|^2\Big\}\rho^2|\n\theta|^2\di x
\\\no&=:&M_\lambda(w,D_\delta),
%\\\no
%&&\phantom{abdadaaaaaaaaaaaaaaaaaaaaaaaaaaaaaada}+\frac{1}{4\v^2}\int_{G''_\delta}{\rho^4(1-|w|^2)^2\di x}
%,\:D_\delta'':=D_\delta\setminus D_\delta'.
\end{eqnarray}
provided that $|w|\leq2$ in $D_\delta$ and $\d\lambda\geq\frac{9}{2\v^2\inf_{D_\delta}{|\n\theta|^2}}$. 

In order to simplify formulas, we will write, in what follows, the second integral in \eqref{3.60} as
\[
\frac{1}{2}\int_{D_\delta}{\left\{|\p_h \tw|^2+|\p_\theta \tw|^2-|\tw|^2+\lambda|\e^{\imath \theta}-\tw|^2\right\}\rho^2|\n\theta|^2\di x}.
\]
The same simplified notation will be implicitly used for similar integrals.
\begin{remark}\label{Remark3.BorneLinfty} If we replace $w$ by $\underline{w}:=\frac{w}{|w|}\min(|w|,2)$, then $M_\lambda$ does not increase. Furthermore replacing $w$ by $\d\underline{w}$ does not affect the Dirichlet condition of (\ref{3.TEST}). Therefore, by replacing $w$ by $\underline{w}$ if necessary, we may assume $|w|\leq2$.
\end{remark}

We next state a lemma which allows us to give a new form of $M_\lambda$.
\begin{lem}\label{L3.17}
Let $f\in C^1(\R,\R)$. Then, for $k\in\Z$, we have
\[
\int_{D'_\delta}{f(h)\cos(k\theta)\rho^2|\n\theta|^2\di x}=\left\{\begin{array}{cl}\d2\delta\int_{1-\delta}^1{f(s)\di s}&\text{if }k= 0\\\d\frac{2\sin(k\delta)}{k}\int_{1-\delta}^1{f(s)\di s}&\text{if }k\neq 0%-\frac{2\pi d_r}{d'^2}\int_{1}^r{f(s)\overline{g}(s)\di s}\text{ sinon.}
\end{array}\right.,
\]
\[
\int_{D^\pm_\delta}{f(h)\,\rho^2|\n\theta|^2\di x}=\delta\int_{1-\delta}^1{f(s)\di s}.
\]
\end{lem}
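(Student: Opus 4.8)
The plan is to prove Lemma \ref{L3.17} by a direct change of variables using the diffeomorphism $\Theta=(\theta,h)$, exploiting the crucial cancellation that the Jacobian factor matches the weight $\rho^2|\n\theta|^2$. First I would recall from the decomposition of $D_\delta$ that $\Theta:D_\delta\to(-2\delta,2\delta)\times(1-\delta,1)$ is a $C^1$-diffeomorphism, and that from \eqref{3.43} one has $\n h=-\rho^2\n^\bot\theta$ in $B\supset\Upsilon$, whence $\n h\cdot\n^\bot h=0$ is automatic and, more to the point, $|\det D\Theta|=|\n\theta\times\n h|=|\n\theta\cdot\n^\bot h|=\rho^2|\n\theta|^2$ (using $\n^\bot h=\rho^2\n\theta$, i.e. $\n h=-\rho^2\n^\bot\theta$ read the other way). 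Thus the co-area/change-of-variables formula gives, for any $F\in L^1$,
\[
\int_{D'_\delta} F(h,\theta)\,\rho^2|\n\theta|^2\di x=\int_{1-\delta}^{1}\!\!\int_{-\delta}^{\delta} F(s,\sigma)\,\di\sigma\,\di s,
\]
and similarly over $D^\pm_\delta$ with the $\sigma$-integral running over $(\delta,2\delta)$ or $(-2\delta,-\delta)$.

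Granting this identity, the lemma is immediate. For the first formula take $F(s,\sigma)=f(s)\cos(k\sigma)$: the right-hand side factors as $\left(\int_{1-\delta}^{1}f(s)\di s\right)\left(\int_{-\delta}^{\delta}\cos(k\sigma)\di\sigma\right)$, and $\int_{-\delta}^{\delta}\cos(k\sigma)\di\sigma$ equals $2\delta$ when $k=0$ and $\frac{2\sin(k\delta)}{k}$ when $k\neq0$, which is exactly the claimed dichotomy. For the second formula take $F(s,\sigma)=f(s)$: the right-hand side over $D^\pm_\delta$ factors as $\left(\int_{1-\delta}^{1}f(s)\di s\right)\cdot|\{\sigma:\delta<|\sigma|<2\delta,\ \pm\sigma>0\}|=\delta\int_{1-\delta}^{1}f(s)\di s$ since the $\sigma$-interval $(\delta,2\delta)$ (resp. $(-2\delta,-\delta)$) has length $\delta$.

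The only real point to verify carefully is the Jacobian identity $|\det D\Theta|=\rho^2|\n\theta|^2$ together with the orientation/sign, so that the change of variables produces the weight with the correct positive constant and no extra factor; this is where I expect the bookkeeping to require the most care. Concretely, writing $D\Theta=\binom{\n\theta^{\mathsf T}}{\n h^{\mathsf T}}$ we get $\det D\Theta=\p_1\theta\,\p_2 h-\p_2\theta\,\p_1 h=\n\theta\cdot(\p_2h,-\p_1h)=-\n\theta\cdot\n^\bot h$; substituting $\n^\bot h=\rho^2\n\theta$ (equivalently $\n h=-\rho^2\n^\bot\theta$, which is the first line of \eqref{3.43} valid on $B$, hence on $\Upsilon$) yields $\det D\Theta=-\rho^2|\n\theta|^2$, so $|\det D\Theta|=\rho^2|\n\theta|^2$ as needed. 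Everything else is the elementary evaluation of $\int\cos(k\sigma)\di\sigma$ and measuring intervals, so no further obstacle arises.
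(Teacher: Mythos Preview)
Your proof is correct and follows exactly the paper's approach: the paper's proof consists of the single remark that the Jacobian of $x\mapsto(\theta(x),h(x))$ is $\rho^2|\n\theta|^2$, and you have simply supplied the details of that computation (via $\n h=-\rho^2\n^\bot\theta$ from \eqref{3.43}) together with the elementary evaluation of the resulting rectangular integrals. There is nothing to add.
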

\begin{proof} This result is easily obtained by noting that the jacobian of the change of variable $x\mapsto(\theta(x),h(x))$ is exactly $\rho^2|\n\theta|^2$.
\end{proof}
For $w=w_t=\psi_t\e^{\imath \theta}$ where $\psi_t$ of the form given by (\ref{3.59}), we have
\begin{eqnarray}\no
M_\lambda(w,D_\delta)&=&\frac{1}{2}\int_{D_\delta}{\left\{|\p_h \tw|^2+|\p_\theta \tw|^2-|\tw|^2+\lambda|\e^{\imath \theta}-\tw|^2\right\}\rho^2|\n\theta|^2\di x}.
%\\\label{3.sanstrop de calcul}
%&=&\frac{1}{2}\int_{D_\delta}{\left(|\p_h \psi|^2+|\p_\theta \psi|^2+\lambda|1-\psi|^2+2\psi\times\p_\theta\psi\right)\rho^2|\n\theta|^2\di x}
%\\\no
%&=&\delta t^2\sum_{k\in\Z}{ b_k^2\phi_k(f_k)}+
\end{eqnarray}
We next rewrite $M_\lambda(w,D_\delta')$. Recalling that for a sequence $\{a_k\}\subset\R$, % is real valued, 
we have
%We claim that for $(a_k)_{k\in\Z}\subset\R$, 
\[
\left|\sum_{k\in\Z}a_k\e^{\imath k\theta}\right|^2=\sum_{k\in\Z}{a_k^2}+2\sum_{\substack{k,l\in\Z,\\k> l}}{a_ka_l\cos[(k-l)\theta]}.
\]
Then we obtain
\begin{eqnarray}\no
M_\lambda(w,D'_\delta)&=&\int_{D'_\delta}\Big\{\frac{t^2}{2}\sum_{k\in\Z}{{b_k} ^2\left[ {f'_k} ^2+{f_k} ^2(k^2+\lambda-1)\right]}%\right.
\\\no
&&\phantom{}-t\sum_{k\neq-1}{b_kf_k(k+1)\cos[(k+1)\theta]}
\\\no
&&\phantom{}-t^2\sum_{k\neq-1}b_{-1}b_k[f_{-1}'f_k'-f_{-1}f_k(k-\lambda+1)]\cos[(k+1)\theta]
\\\label{3.85}
&&\phantom{}+t^2\sum_{\substack{k, l\neq-1 \\ k-l > 0 }}{b_kb_l[f_k'f_l'+(kl+\lambda-1)f_kf_l]\cos[(k-l)\theta]}\Big\}\rho^2|\n\theta|^2.
\end{eqnarray}
Using Lemma \ref{L3.17} and (\ref{3.85}), we have 
\begin{eqnarray}\no
M_\lambda(w ,D'_\delta)&=&\delta t^2\sum_{k\in\Z}{b_k^2\phi_k(f_k)}-2t\sum_{k\neq-1}{b_k\sin[(k+1)\delta]\,\int_{1-\delta}^1{f_k}}
\\\no
&&-2t^2\sum_{k\neq-1}{b_{-1}b_k\frac{\sin[(k+1)\delta]}{k+1}\int_{1-\delta}^1{\left\{f_{-1}'f_k'-(k-\lambda+1)f_{-1}f_k\right\}}}
\\\label{3.totala}
&&+2t^2\sum_{\substack{k, l\neq-1 \\ k-l > 0}}{b_kb_l\frac{\sin[(k-l)\delta]}{k-l}\int_{1-\delta}^1{\left\{f_k'f_l'+(kl+\lambda-1)f_kf_l\right\}}}
\\\label{3.totala1}&=&R_\lambda(w)-2t\sum_{k\neq-1}{b_k\sin[(k+1)\delta]\,\int_{1-\delta}^1{f_k}}.
\end{eqnarray}
with
\begin{eqnarray*}
R_\lambda(w)&=&\delta t^2\sum_{k\in\Z}{b_k^2\phi_k(f_k)}
\\&&-2t^2\sum_{k\neq-1}{b_{-1}b_k\frac{\sin[(k+1)\delta]}{k+1}\int_{1-\delta}^1{\left\{f_{-1}'f_k'-(k-\lambda+1)f_{-1}f_k \right\}}}
\\\no
&&+2t^2\sum_{\substack{k, l\neq-1 \\ k-l > 0}}{b_kb_l\frac{\sin[(k-l)\delta]}{k-l}\int_{1-\delta}^1{\left\{f_k'f_l'+(kl+\lambda-1)f_kf_l\right\}}},
\end{eqnarray*}

\[
\phi_k(f)=\int_{1-\delta}^1{\left\{f'^2+\alpha_k^2f^2\right\}}
\]
and
\[
\d\alpha_k=\sqrt{k^2+\lambda-1}.
\]
We next establish a similar identity for $M_\lambda(w_t,D_\delta^\pm)$. Using (\ref{3.59}), we have
\begin{eqnarray}\no
M_\lambda(w_t,D_\delta^\pm)&=&\frac{1}{2}\int_{D^\pm_\delta}{\left\{|\p_h \tw(h,\theta)|^2+|\p_\theta \tw(h,\theta)|^2-|w|^2+\lambda|\e^{\imath \theta}-w|^2\right\}\rho^2|\n\theta|^2}
\\\no
&=&\frac{1}{2}\int_{D^\pm_\delta}\bigg\{|\p_h \tilde{\psi_t}(h,\pm\delta)|^2\left(\frac{2\delta\mp\theta}{\delta}\right)^2
\\\no%\label{3.gpm-decomposition}
&&+\delta^{-2}(1+\lambda\left(2\delta\mp\theta\right)^2)|\tilde{\psi_t}(h,\pm\delta)-1|^2\mp2\delta^{-1}\text{Im }\tilde{\psi_t}(h,\pm\delta)\bigg\}\rho^2|\n\theta|^2
\\\no
&=&\frac{1}{2\delta^2}\int_{D^\pm_\delta}\Big\{|\p_h \tilde{\psi_t}(h,\pm\delta)|^2\left(2\delta\mp\theta\right)^2
\\\no
&&\phantom{aaaaaaa}+(1+\lambda\left(2\delta\mp\theta\right)^2)|\tilde{\psi_t}(h,\pm\delta)-1|^2\Big\}\rho^2|\n\theta|^2
\\\label{3.gpm-decomposition}
&&\phantom{aaaaaaaaqqqqqqqaaaaaa}+t\sum_{k\neq-1}{b_k(t)\sin[(k+1)\delta]\int_{1-\delta}^1f_k}.
\end{eqnarray}
Here, $\text{Im }\psi$ denotes the imaginary part of $\psi$.
To obtain (\ref{3.gpm-decomposition}), we used the identity
\[
|\p_\theta(\psi\e^{\imath\theta})|^2=|\p_\theta\psi|^2+|\psi|^2+2\psi\times\p_\theta\psi.
\]
%From (\ref{3.totala1}) and (\ref{3.gpm-decomposition}), we have
%\begin{equation}\label{3.final decres}
%M_\lambda(w_t,D_\delta)=
%\end{equation}
\subsection{Choice of $w=\psi_t{\rm e}^{\imath\theta}$}
We take %(en posant $\alpha=\alpha_k$), 
\begin{equation}\label{3.fk}
f_k(h)=\frac{\e^{\alpha_k( h-1)}}{1-\e^{-2\alpha_k\delta}}+\frac{\e^{-\alpha_k (h-1)}}{1-\e^{2\alpha_k\delta}}.
\end{equation}
With this choice, by direct computations we have
\begin{equation}\label{3.est1}
\phi_k(f_k)=\alpha_k\left(1+\frac{2}{\e^{2\alpha_k\delta}-1}\right),
\end{equation}
\begin{equation}\label{3.est2}
\int_{1-\delta}^1f_k=\frac{1}{\alpha_k}\left(1-\frac{2}{\e^{\alpha_k\delta}+1}\right)
\end{equation}and for $k,l\in\Z$ s.t. $k\neq\pm l$,
\begin{equation}\label{3.est3}
\int_{1-\delta}^1f_kf_l=\frac{1-\e^{-2(\alpha_k+\alpha_l)\delta}}{(\alpha_k+\alpha_l)(1-\e^{-2\alpha_k\delta})(1-\e^{-2\alpha_l\delta})}-\frac{1-\e^{-2(\alpha_k-\alpha_l)\delta}}{(\alpha_k-\alpha_l)(1-\e^{-2\alpha_k\delta})(\e^{2\alpha_l\delta}-1)},
\end{equation}
\begin{eqnarray}\no
\frac{1}{\alpha_k\alpha_l}\int_{1-\delta}^1f'_kf'_l&=&\frac{1-\e^{-2(\alpha_k+\alpha_l)\delta}}{(\alpha_k+\alpha_l)(1-\e^{-2\alpha_k\delta})(1-\e^{-2\alpha_l\delta})}
\\\label{3.est4}
&&\phantom{aaaaaaaa}+\frac{1-\e^{-2(\alpha_k-\alpha_l)\delta}}{(\alpha_k-\alpha_l)(1-\e^{-2\alpha_k\delta})(\e^{2\alpha_l\delta}-1)}.%,\,k\neq\pm l.
\end{eqnarray}

Using \eqref{3.totala1}—\eqref{3.est4}, we may obtain the following estimate, whose proof is postponed to Appendix \ref{3.Apres}.
\begin{lem}\label{L3.simplification}We have
\begin{equation}\label{3.changement}
M_\lambda(w_t,D_\delta)\leq \delta-2\delta t+4t^2\sum_{k>l>0}{c_kc_l\frac{\sin[(k-l)\delta]}{k-l} \frac{kl}{k+l}}+o(t).
\end{equation}
\end{lem}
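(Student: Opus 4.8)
The plan is to assemble the three ingredients already in hand — the identity \eqref{3.totala1} for $M_\lambda(w_t,D'_\delta)$, the identity \eqref{3.gpm-decomposition} for $M_\lambda(w_t,D^\pm_\delta)$, and the closed forms \eqref{3.est1}--\eqref{3.est4} for the one–dimensional integrals of the $f_k$'s — and then to read off the contributions of order $t^0$, $t^1$ and $o(t)$. \emph{Step 1 (decomposition and cancellation of the linear terms).} I would write $M_\lambda(w_t,D_\delta)=M_\lambda(w_t,D'_\delta)+M_\lambda(w_t,D^+_\delta)+M_\lambda(w_t,D^-_\delta)$ and add \eqref{3.totala1} to the two instances of \eqref{3.gpm-decomposition}; the series $-2t\sum_{k\neq-1}b_k\sin[(k+1)\delta]\int_{1-\delta}^1f_k$ coming from $D'_\delta$ is then cancelled exactly by the two copies $+t\sum_{k\neq-1}b_k\sin[(k+1)\delta]\int_{1-\delta}^1f_k$ coming from $D^+_\delta$ and $D^-_\delta$. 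This leaves $M_\lambda(w_t,D_\delta)=R_\lambda(w_t)+P^++P^-$, where $P^\pm\ge0$ is the nonnegative quadratic boundary–strip integral of \eqref{3.gpm-decomposition}. Using Lemma \ref{L3.17} — together with the obvious variant giving the extra factor $\int_0^\delta s^2\di s=\delta^3/3$ from the weight $(2\delta\mp\theta)^2$ — each $P^\pm$ reduces to one–dimensional integrals in $h$ of $|\p_h\tilde\psi_t(h,\pm\delta)|^2$ and $(1+\mathrm O(\lambda))|\tilde\psi_t(h,\pm\delta)-1|^2$; since $\tilde\psi_t(h,\pm\delta)-1$ is $t$ times a convergent series in the $f_k$'s (with coefficients $b_k\e^{\mp(k+1)\imath\delta}$) and $\p_h\tilde\psi_t(h,\pm\delta)$ the analogous series in the $f_k'$'s, expanding the squares, inserting \eqref{3.est1}--\eqref{3.est4}, and using the rapid decay of $b_k$ and of $b_k-c_k$ (Lemma \ref{L3.18}) gives $P^\pm\le C(\lambda,\delta)\,t^2=o(t)$.

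\emph{Step 2 (the diagonal part of $R_\lambda$).} By \eqref{3.est1}, $\phi_k(f_k)=\alpha_k\coth(\alpha_k\delta)$, and $\alpha_k=\sqrt{k^2+\lambda-1}=k+\mathrm O(1/k)$ for large $k$. Replacing $b_k$ by $c_k$ modulo $o(t)$ (Lemma \ref{L3.18}(3)), only the modes $k\ge0$ survive — the single $k=-1$ term is $\mathrm O(t^2)$ and $c_k=0$ for $k\le-2$ — and the elementary identity $(2-t)^2\sum_{k\ge0}(1-t)^{2k}k=\big((1-t)/t\big)^2$ gives $\delta t^2\sum_k b_k^2\phi_k(f_k)=\delta(1-t)^2+o(t)=\delta-2\delta t+o(t)$. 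This is the origin of the first two terms in \eqref{3.changement}.

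\emph{Step 3 (the off–diagonal part of $R_\lambda$, and conclusion).} I would insert \eqref{3.est3}--\eqref{3.est4}: each integral $\int_{1-\delta}^1\{f_k'f_l'+(kl+\lambda-1)f_kf_l\}$ splits into a term with denominator $\alpha_k+\alpha_l$ and a term carrying the factor $(\e^{2\alpha_l\delta}-1)^{-1}$; the latter is exponentially small and, after summation, contributes only $o(t)$, while the former equals $\frac{\alpha_k\alpha_l+kl+\lambda-1}{\alpha_k+\alpha_l}\big(1+\mathrm O(\e^{-c\sqrt\lambda})\big)$, which by the algebraic identity $(\alpha_k+\alpha_l)^2-(k-l)^2=2(\alpha_k\alpha_l+kl+\lambda-1)$ equals $\frac{\alpha_k+\alpha_l}{2}-\frac{(k-l)^2}{2(\alpha_k+\alpha_l)}$. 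The middle sum of $R_\lambda$ is exactly the $l=-1$ case of this last sum (via $f'_{-1}f'_k+(\lambda-k-1)f_{-1}f_k=\{f_k'f_l'+(kl+\lambda-1)f_kf_l\}|_{l=-1}$), and for $l\in\{-1,0\}$ the quantity above simplifies to $\sqrt\lambda-\frac{k+1}{\alpha_k+\sqrt\lambda}$, resp.\ $\sqrt{\lambda-1}$, so for fixed $\lambda$ the $l\le0$ terms (hence the whole middle sum) are $\mathrm O(t^2)=o(t)$; for $k,l\ge1$ one has $\alpha_k=k+\mathrm O(\lambda/k)$, so $\frac{\alpha_k+\alpha_l}{2}-\frac{(k-l)^2}{2(\alpha_k+\alpha_l)}=\frac{2kl}{k+l}+\mathrm O\!\big(\lambda\,\tfrac{k^2+l^2}{kl(k+l)}\big)$, the error summing (again by the decay of $c_k$ and $b_k-c_k$) to $o(t)$. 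Therefore $2t^2\sum_{k>l,\,k,l\neq-1}b_kb_l\frac{\sin[(k-l)\delta]}{k-l}\int\{\cdots\}=4t^2\sum_{k>l>0}c_kc_l\frac{\sin[(k-l)\delta]}{k-l}\frac{kl}{k+l}+o(t)$, and combining with Steps 1--2 gives \eqref{3.changement}. The main obstacle is this last step: the off–diagonal sums are individually large — the $\frac{\alpha_k+\alpha_l}{2}$–part has size $\mathrm O(1)$ as $t\to0$, i.e.\ of the order of the target itself — so one must keep the exponentially small terms, the $\mathrm O(\lambda/(k+l))$ terms and the $b_k-c_k$ errors under simultaneous, mode–uniform control, and it is precisely the rapid–decay estimate Lemma \ref{L3.18}(3) together with careful tail bounds on the series that make all of them genuinely $o(t)$ (for each fixed $\lambda$, which is what is needed for the later choice $t=t(\v)$).
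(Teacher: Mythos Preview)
Your overall architecture matches the paper's proof exactly: cancel the linear term between $D'_\delta$ and $D^\pm_\delta$, show the remaining boundary-strip pieces $P^\pm$ are $o(t)$, then split $R_\lambda$ into its diagonal and off-diagonal parts. Steps 2 and 3 are essentially correct and parallel the paper's Appendix computations; your algebraic identity $(\alpha_k+\alpha_l)^2-(k-l)^2=2(\alpha_k\alpha_l+kl+\lambda-1)$ is a nice way to reach the paper's estimate \eqref{3A.13}.

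There is, however, a genuine gap in Step~1. You claim $P^\pm\le C(\lambda,\delta)t^2$ ``using the rapid decay of $b_k$ and of $b_k-c_k$''. But $b_k$ does \emph{not} decay rapidly: by Lemma~\ref{L3.18} one has $b_k\sim c_k=(t-2)(1-t)^k$, which for $k$ of order $1/t$ is of size $O(1)$. If you expand $\int_{1-\delta}^1|\p_h\tilde\psi_t(h,\pm\delta)|^2$ as $t^2\sum_{k,l}b_kb_l\cos[(k-l)\delta]\int f'_kf'_l$ and bound term by term using \eqref{3.est4} (i.e.\ $|\int f'_kf'_l|\le C(\min(|k|,|l|)+1)$), the resulting sum is of order $t^{-1}$, not $O(1)$. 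The mechanism that makes $P^\pm$ small is \emph{not} decay of the coefficients but the oscillation coming from the factor $\e^{\mp\imath(k+1)\delta}$: the paper isolates this in Lemma~\ref{L3.19}, proving the pointwise bounds $|\tilde\psi_t(h,\pm\delta)-1|=\mathcal O(t)$ and $|\p_h\tilde\psi_t(h,\pm\delta)|=\mathcal O(t|\ln t|)$ by summing $\sum_{k\ge0}k\big[(1-t)\e^{-\imath\delta-(1-h)}\big]^k$ in closed form (this is where $\delta>0$ enters). These give $P^\pm=\mathcal O(t^2\ln^2 t)=o(t)$ --- note the logarithm, so your claimed $O(t^2)$ is also slightly too strong. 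Once you insert this pointwise argument (or an equivalent one exploiting the phase $\e^{\mp\imath(k+1)\delta}$) in place of ``rapid decay of $b_k$'', your proof goes through.
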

%Afin d'estimer $M_\lambda(w_t)$, on définit $\underline{w}_t(\theta,h)(\cdot)\in W_t$ par%(par le lemme \ref{L3.18})
%\[
%\underline{w}_t=\left[(1-tc_{-1}f_{-1}(h))+t\sum_{k\neq-1}{c_k(t)f_k(h)\e^{-\imath(k+1)\theta}}\right]\e^{\imath \theta}.
%\] 
\subsection{End of the proof of Lemma \ref{L3.15soft}}
We denote
\begin{equation}\label{3.def.S.delta}
S(\delta,t):=\sum_{k>l>0}{c_kc_l\frac{\sin[(k-l)\delta]}{k-l} \frac{kl}{k+l}}.%=\frac{(2-t)^2}{4}\sum_{\substack{k, l>0 \\ k-l > 0 }}{(1-t)^{k+l}\frac{\sin[(k-l)\delta]}{k-l} \left(k+l-\frac{(k-l)^2}{k+l}\right)}.
\end{equation}
Setting $n=k-l$ and noting that $\d\frac{(n+l)l}{n+2l}=\frac{l}{2}+\frac{ln}{2(n+2l)}$, we have
\begin{eqnarray*}
\frac{2}{(t-2)^2}S(\delta,t)&=&\sum_{n>0}{(1-t)^{n}\frac{\sin (n\delta)}{n}}\sum_{l>0}{l(1-t)^{2l}}%+\sum_{n>0}{(1-t)^n\sin (n\delta)}\sum_{l>0}{(1-t)^{2l}}
%\\
%&&\phantom{aaaaaavvvvvvvvvvvvaaaaaaaaaaaaaaaaaaaa}
+\sum_{n,l>0}{(1-t)^{n+2l}\sin (n\delta)\frac{l}{n+2l}}.
\end{eqnarray*}
Here, we have used the explicit formulae for the $c_k$'s, given by Lemma \ref{L3.18}.

Using Appendix \ref{3.Apsoft} (see Appendix \ref{3.AP1}) we find that for $0<t<\delta$, we have
%\[
%\sum_{l>0}{l(1-t)^{2l}}=\frac{(1-t)^2}{t^2\left(2-t\right)^2},
%\]

%\[
%\sum_{n>0}{(1-t)^{n}\sin (n\delta)}=\frac{(1-t)\sin \delta}{1-2(1-t)\cos\delta+(1-t)^2},
%\]
%\[
%\sum_{n>0}{(1-t)^{n}\frac{\sin n\delta}{n}}=\arctan \frac{1-t-\cos\delta}{\sin\delta}+\arctan \frac{\cos\delta}{\sin\delta},
%\]
%\begin{eqnarray*}
%\sum_{n,l>0}{(1-t)^{n+2l}\sin n\delta\frac{l}{n+2l}}&=&\frac{1-t+\cos\delta}{4\sin\delta t(2-t)}-\frac{1}{4\sin^2\delta}\arctan\frac{1-t-\cos\delta}{\sin\delta}+\text{Cst}(\delta).
%\left|\sum_{n,l>0}{(1-t)^{n+2l}\sin n\delta\frac{l}{n+2l}}\right|&\leq&\sum_{n,l>0}{(1-t)^{n+2l}\frac{l}{n+2l}}
%\\
%&\leq&\frac{A}{2-t}+\frac{B}{t}+C\ln(2-t)+D\ln t+E\arctan\frac{1-t-\cos\delta}{\sin\delta}+F,
%\end{eqnarray*}
%On voit alors que pour $\delta>t>0$ assez petit, $\d\sum_{n,l>0}{(1-t)^{n+2l}\sin n\delta\frac{l}{n+2l}}>0$, par suite, 
\begin{eqnarray}\nonumber
S(\delta,t)&=&\frac{(1-t)^2}{2t^2}\left[\arctan \left(\frac{1-t-\cos\delta}{\sin\delta}\right)+\arctan\left( \frac{\cos\delta}{\sin\delta}\right)\right]\\\label{3.part3.1}&&\phantom{aaaaaaaaaaasssssssssssssss}+\frac{(1-t+\cos\delta)(2-t)}{8t\sin\delta }+\mathcal{O}(1).
\end{eqnarray}
We note that
\begin{eqnarray}\nonumber
\arctan \left(\frac{1-t-\cos\delta}{\sin\delta}\right)&=&\arctan\left( \frac{1-\cos\delta}{\sin\delta}\right)-\frac{t\sin\delta}{2(1-\cos\delta)}+\mathcal{O}(t^2)\\\label{3.part3.2}&=&\frac{\delta}{2}-\frac{t\sin\delta}{2(1-\cos\delta)}+\mathcal{O}(t^2)
\end{eqnarray}
and
\begin{equation}\label{3.part3.21}
\arctan \left(\frac{\cos\delta}{\sin\delta}\right)=\frac{\pi}{2}-\delta.
\end{equation}
From (\ref{3.part3.1})—(\ref{3.part3.21}) we infer
\begin{equation}\label{3.part3.22bis}
S(\delta,t)\leq\frac{1}{4t^2}(\pi-\delta)+\frac{1}{ t}\left[\frac{(1-t+\cos\delta)(2-t)}{8\sin\delta }-\frac{\sin\delta}{4(1-\cos\delta)}\right]+\mathcal{O}(1)
\end{equation}
with
\begin{equation}\label{3.part3.22}
\frac{(1-t+\cos\delta)(2-t)}{8\sin\delta }-\frac{\sin\delta}{4(1-\cos\delta)}<\frac{(1+\cos\delta)}{4\sin\delta }-\frac{\sin\delta}{4(1-\cos\delta)}=0.
\end{equation}
From (\ref{3.changement}),
\begin{equation}\label{3.sumupbis}
M_\lambda(w_t,D_\delta)\leq\delta-2\delta t+4t^2 S(\delta,t)+o(t).
\end{equation}
Using \eqref{3.part3.22bis} and \eqref{3.part3.22},
\begin{equation}\label{3.sumup}
4t^2 S(\delta,t)\leq\pi-\delta+o(t).
\end{equation}
Finally, we have by combining \eqref{3.sumupbis} with \eqref{3.sumup},
\begin{equation}\label{3.resultat.final}
M_\lambda(w_t,D_\delta)\leq\pi-2\delta t+o(t)<\pi\text{ for }t\text{ small.}
\end{equation}

We conclude that for $t$ sufficiently small, $L_\v^d(\underline{w}_t,D_\delta)<\pi$. 

\subsection{Conclusion}
$\tilde{u}:=\underline{\psi} u$, with $\underline{\psi}=\d\psi_t\frac{\min(|\psi_t|,2) }{|\psi_t|}$, satisfies the desired properties \emph{i.e.}:
\begin{description}
\item[$\bullet$] $E_\v(\tilde{u})<E_\v(u)+\pi$ (by (\ref{3.60}) and (\ref{3.resultat.final})) ;
\item[$\bullet$]$\tilde{u}\in \J^{\db }_{{\bf p},q-1}$ (by (\ref{3.83}), (\ref{3.degint0}) and (\ref{3.degext0})).
\end{description}
\subsection{A direct consequence of Lemma \ref{L3.15soft}}
By applying Lemma \ref{L3.15soft} and next Lemma \ref{P3.bougedegmult}, one may easily obtain the following
\begin{cor}\label{C3.DirectConsequenceOfMainTool}
Let $u\in \J_{{\bf p},q}$ be a solution of (\ref{3.8}), (\ref{3.8'}). 

Assume that 
\begin{equation}\nonumber%\label{3.deg}
\ab_j(u)\in(d_j-\frac{1}{3},d_j+\frac{1}{3}),\:\forall\,j\in\N_N.
\end{equation}
Assume that there are  $i_0\in \{0,...,N\}$ and $x_0\in\p\omega_{i_0}$ s.t. $ u\times\p_\tau u(x_0)>0$. 

Then for all $\delta=(\delta_1,...,\delta_N,\delta_0)\in\Z^{N+1}$ s.t. $\delta_{i_0}>0$, there is $\tilde{u}_\delta\in\J_{({\bf p},q)-\delta}$ s.t. 
\begin{equation}\no
E_\v(\tilde{u}_\delta)<E_\v(u)+\pi\sum_i|\delta_i|
\end{equation}
and
\[
\ab_j(\tilde{u}_\delta)\in(d_j-\frac{1}{3},d_j+\frac{1}{3}),\,\forall j\in\N_N.
\]
\end{cor}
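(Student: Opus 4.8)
The plan is to spend Lemma~\ref{L3.15soft} exactly once --- to lower the degree on $\p\omega_{i_0}$ by one, using the point $x_0$ --- and then to realise all the remaining degree changes with the soft tool Lemma~\ref{P3.bougedegmult}. The key point is that the first step buys a strictly positive amount of energy, which will absorb the loss $\eta$ produced by the second step and turn the resulting bound into a strict inequality.

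First I apply Lemma~\ref{L3.15soft} to the component $\omega_{i_0}$: since $u$ solves (\ref{3.8}), (\ref{3.8'}), satisfies (\ref{3.deg}) and $u\times\p_\tau u(x_0)>0$ at $x_0\in\p\omega_{i_0}$, I obtain $\hat u\in\J_{(\bfp,q)-{\bf e}_{i_0}}$ with $E_\v(\hat u)<E_\v(u)+\pi$ and $\ab_j(\hat u)\in(d_j-\tfrac13,d_j+\tfrac13)$ for all $j\in\N_N$; set $c:=E_\v(u)+\pi-E_\v(\hat u)>0$. To pass from $\deg(\hat u,\dom)=(\bfp,q)-{\bf e}_{i_0}$ to $(\bfp,q)-\delta$ I must apply the shift ${\bf e}_{i_0}-\delta\in\Z^{N+1}$, whose total cost is, using $\delta_{i_0}\ge1$,
\[
\sum_{i\in\{0,\dots,N\}}\big|({\bf e}_{i_0}-\delta)_i\big|=(\delta_{i_0}-1)+\sum_{i\ne i_0}|\delta_i|=\Big(\sum_i|\delta_i|\Big)-1 .
\]
Applying Lemma~\ref{P3.bougedegmult} to $\hat u$ with this shift and a parameter $\eta\in(0,c)$, I obtain $\tilde u_\delta\in\J_{(\bfp,q)-\delta}$ with $\|\hat u-\tilde u_\delta\|_{L^2(\dom)}=o_\eta(1)$ by (\ref{3.convennormeL2mul}) and
\[
E_\v(\tilde u_\delta)\le E_\v(\hat u)+\pi\Big(\sum_i|\delta_i|-1\Big)+\eta<E_\v(u)+\pi\sum_i|\delta_i|-c+\eta<E_\v(u)+\pi\sum_i|\delta_i| .
\]

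It remains to keep the approximate bulk degrees inside $(d_j-\tfrac13,d_j+\tfrac13)$. Both $E_\v(\hat u)$ and $E_\v(\tilde u_\delta)$ are bounded by $\Lambda:=E_\v(u)+\pi\sum_i|\delta_i|$, so Proposition~\ref{P3.1}(2) gives $|\ab_j(\hat u)-\ab_j(\tilde u_\delta)|\le\tfrac2\pi\|V_j\|_{C^1(\dom)}\Lambda^{1/2}\|\hat u-\tilde u_\delta\|_{L^2(\dom)}=o_\eta(1)$; choosing $\eta$ small enough (depending on $u,\v,\delta$) that this quantity is below the distance from each $\ab_j(\hat u)$ to $\{d_j\pm\tfrac13\}$, I conclude $\ab_j(\tilde u_\delta)\in(d_j-\tfrac13,d_j+\tfrac13)$ for every $j$, which finishes the proof. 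This argument is essentially bookkeeping --- all the analytic content sits in Lemma~\ref{L3.15soft} --- and the only mild care needed is to record the strict inequality of that lemma as a genuine positive gain $c$ \emph{before} invoking Lemma~\ref{P3.bougedegmult}, and to fix $\eta<c$ afterwards. I therefore do not expect any real obstacle.
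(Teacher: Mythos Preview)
Your proof is correct and follows exactly the route the paper indicates: apply Lemma~\ref{L3.15soft} once on $\p\omega_{i_0}$ to secure a strict gain, then use Lemma~\ref{P3.bougedegmult} for the remaining shift ${\bf e}_{i_0}-\delta$, choosing $\eta$ small enough both to preserve the strict energy inequality and, via Proposition~\ref{P3.1}(2) and (\ref{3.convennormeL2mul}), the open \emph{abdeg} constraints. The paper does not spell this out beyond the one-line hint, and your bookkeeping is precisely what is intended.
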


\section{Proof of Theorem \ref{T3.Existence}}
The energy estimate is obtained from Lemmas \ref{L3.5} and \ref{L3.bornesup}.

%The proof is made by induction on 
%\[
%K=\textsf{\ae}((\db,d),({\bf p},q))=|d_1-p_1|+...+|d_N-p_N|+|d-q|\geq0.
%\]
We call $({\bf p},q,\db)$ a good configuration of degrees if
\[
({\bf p},q,\db)\in\Z^N\times\Z\times(\N^*)^N,\,p_i\leq d_i\text{ and }q\leq\sum_i{d_i}=:d.
\]
We first prove Theorem \ref{T3.Existence} when 
\[
\textsf{\ae}((\db,d),({\bf p},q))=|d_1-p_1|+...+|d_N-p_N|+|d-q|=0\Leftrightarrow {\bf p}=\db\text{ and }q=d.  
\]
For $\v>0$, let  $(u^\v_n)_n$ be a minimizing sequence of $E_\v$ in $\J_{\db,d}^\db$. For $\v<\v_{\ref{3.eps4}}({\bf d},d,{\bf d})$, up to subsequence, using Proposition \ref{P3.min}, $u^\v_n\rightarrow u_\v$ weakly in $H^1$ and strongly in $L^4$ and $u_\v$ is a (global) minimizer of $E_\v$ in $\J_{\deg(u_\v,\dom)}^\db$. 
 
Applying Lemmas \ref{L3.6} and \ref{L3.5}, for $\v<\v_{\ref{3.eps2}}(\db)\leq\v_{\ref{3.eps4}}$ (here, $\v_{\ref{3.eps2}}$ is s.t. the $o_\v(1)$ of Lemma \ref{L3.5} is lower than $\d\frac{\pi}{2}$), 
\begin{eqnarray}\no
I_0(\db ,\dom)&\geq&E_\v(u_\v)+\pi\tae(\deg(u_\v,\dom),({\bf d},d))
\\\no
&\geq&I_0(\db ,\dom)-\frac{\pi}{2}+2\pi\tae(\deg(u_\v,\dom),({\bf d},d)).
\end{eqnarray}
It follows, $\d\tae(\deg(u_\v,\dom),({\bf d},d))\leq\frac{1}{4}$ which implies $u_\v\in\J_{\db,d}^\db$.

%Assume that Theorem \ref{T3.Existence} is true for all configurations $({\bf p},q,\db)$ s.t. 
%\[
%0\leq \tae(({\bf p},q),(\db,d))\leq K.
% \]
We now prove (following the same strategy) Theorem \ref{T3.Existence} for a good configuration $({\bf p},{q},\db)$ s.t. 
\[
\tae(({\bf p},{q}),(\db,d))>0.
 \]
For $\v>0$ consider $(u_n^\v)_n$ a minimizing sequence of $E_\v$ in $\J_{{\bf p},{q}}^\db$.

For $\v<\v_{\ref{3.eps4}}({\bf p},{q},\db)$, up to subsequence, using Proposition \ref{P3.min}, $u^\v_n\rightarrow u_\v$ weakly in $H^1$ and strongly in $L^4$ and $u_\v$ is a (global) minimizer of $E_\v$ in $\J_{\deg(u_\v,\dom)}^\db$. 
 %Let $(\tilde{\bf p},\tilde{q},\db)$ be a good configuration s.t. $\tae((\tilde{\bf p},\tilde{q}),(\db,d))=K+1$. Then there is some $i\in\{0,...,N\}$ s.t. $((\tilde{\bf p},\tilde{q})+{\bf e}_i,\db)$ is a good configuration and $\tae((\tilde{\bf p},\tilde{q})+{\bf e}_i,(\db,d))=K$.
 
 %Without loss of generality, we may assume that $i=0$. Set ${\bf p}=\tilde{\bf p}$ and $\tilde{q}+1=q$. By induction hypothesis, Theorem \ref{T3.Existence} holds for $({\bf p},q,\db)$.

%Let $\v_{\ref{3.eps2}}({\bf p},q,\db)>0$ be defined by induction hypothese and $(u_\v)_{\v_{\ref{3.eps2}}>\v>0}$ be a family of (global) minimizers of $(E_\v)_{\v_{\ref{3.eps2}}>\v>0}$ in $\J_{{\bf p},q}^\db$. 

Let $\Lambda:=I_0(\db,\dom)+\tae(({\bf p},{q}),(\db,d))\pi+1$, by Lemma \ref{L3.deg}, for $\v<\v_{\ref{3.eps5}}({\bf p},q,\db,\Lambda)$, there is $x_\v^0\in\p\O$ s.t. $(u_\v\times\p_\tau u_\v)(x_\v^0)>0$.% on $\p\O$.

The third assertion in Proposition \ref{P3.1} and the energy bound give the existence of  $0<\v_{\ref{3.eps2}}'({\bf p},q,\db,\Lambda)<\v_{\ref{3.eps5}}({\bf p},q,\db,\Lambda)$ s.t. for $0<\v<\v_{\ref{3.eps2}}'$,
\[
\ab_i(u_\v)\in(d_i-\frac{1}{3},d_i+\frac{1}{3}).
\]
%Using Lemma \ref{L3.15soft}, for $\v<\v_{\ref{3.eps2}}'$, we have the existence of $\tilde{u_\v}\in\J_{{\bf p},q-1}^\db$ s.t.
%\[
%m_\v({\bf p},q,\db)+ \pi =E_\v(u_\v)+ \pi >E_\v(\tilde{u_\v})\geq m_\v({\bf p},q-1,\db).
%\]

%Using \eqref{3.supmv}, we have,%the energy bound of the induction hypothesis,
%\[
%m_\v({\bf p},q-1,\db)< I_0(\db,\dom)+(K+1)\pi.%\left(d_1-p_1+...+d_N-p_N+d-(q-1)\right).
%\]

%Let $(u^\v_n)_n$ be a minimizing sequence of $E_\v$ in $\J_{{\bf p},q-1}^\db$. Using Proposition \ref{P3.min}, for $\v<\v_{\ref{3.eps4}}$, up to subsequence, $u_n^\v\rightarrow u^\v$ weakly in $H^1$ and strongly in $L^2$ with $u^\v$ which is a global minimizer of $E_\v$ in $\J^\db_{\deg(u^\v,\dom)}$.

%Par convergence forte dans $L^2$, $\ab_i(u)=d_i$ et par convergence faible $E_\v(u)\leq m_\v(\bfp,q,\db)+\pi$.

%It suffices to show that, for $\v$ sufficiently small, $u^\v\in \J_{{\bf p},q-1}^\db$. The argument is similar to the one used for $K=0$. By strong $L^2$ convergence, $\ab_i(u^\v)\in[d_i-1/3,d_i+1/3]$. 

Fix $\v_{\ref{3.eps2}}'({\bf p},q,{\bf d})>\v_{\ref{3.eps2}}({\bf p},q,{\bf d})>0$ s.t. the $o_\v(1)$  in Lemma \ref{L3.5} is lower than $\d\frac{\pi}{2}$ (here $\v_{\ref{3.eps5}}$ is defined in Lemma \ref{L3.deg}).

Using Lemmas \ref{L3.6}, \ref{L3.5} and \ref{L3.bornesup}, we have for $\v<\v_2$
\begin{eqnarray*}
 I_0(\db,\dom)+\pi\tae(({\bf p},{q}),(\db,d))&\geq&\liminf E_\v(u_n^\v)\:(\text{by Lemma  \ref{L3.bornesup} and the definition of }(u^\v_n)_n)%m_\v(\bfp,q-1,\db)
% \\&\geq&\liminf E_\v(u^\v_n)\:\:\:(\text{by the definition of }(u^\v_n)_n)%&\geq&E_\v(u)
\\\no
&\geq&E_\v(u_\v)+\pi\tae\left(({\bf p},q),\deg(u_\v,\dom)\right)\:\:\:(\text{Lemma \ref{L3.6}})
\\\no
&\geq&I_0(\db,\dom)+\pi\left[\tae\left(({\bf p},q),\deg(u^\v,\dom)\right)\right.
\\&&\phantom{aaaaa}\left.+\tae\left(({\bf d},d),\deg(u^\v,\dom)\right)\right]-\frac{\pi}{2}\:\:(\text{Lemma \ref{L3.5}})
%\\
%&\geq&I_0(\db,\dom)+\pi\left(\tae\left(({\bf p},q-1),(\db,d)\right)-\frac{1}{2}\right)\:(\text{by the triangle inequality})
%\\
%&\geq&I_0(\db,\dom)+\left(K+\frac{1}{2}\right)\pi.
\end{eqnarray*}
It follows that
\begin{equation}\label{3.Ledegreestdanslebonintervalle}
\tae\left(({\bf p},q),\deg(u^\v,\dom)\right)+\tae\left(({\bf d},d),\deg(u^\v,\dom)\right)=\tae(({\bf p},{q}),(\db,d)).%\sum_{i\in\N_N}{|p_i-\deg_{\p\omega_i}(u^\v)|}+|q-1-\deg_{\p\Omega}(u^\v)|+\sum_{i\in\N_N}{|d_i-\deg_{\p\omega_i}(u^\v)|}+|d-\deg_{\p\Omega}(u^\v)|= K+1
\end{equation}
Thus%Since $\tae\left(({\bf p},q-1),({\bf d},d)\right)=K+1$, we must have
\begin{equation}\nonumber%\label{3.finiteset}
p_i\leq\deg_{\p\omega_i}(u^\v)\leq d_i\text{ and }q\leq\deg_{\p\O}(u^\v)\leq d.
\end{equation}
Assume that there is $\v<\v_2$ s.t. $u_\v\notin\J_{{\bf p},q}^{\bf d}$. Then from Lemma \ref{L3.deg} and \eqref{3.Ledegreestdanslebonintervalle}, one may apply Corollary \ref{C3.DirectConsequenceOfMainTool} to obtain the existence of $\tilde{u}_\v\in\J_{{\bf p},q}^{\bf d}$ s.t.
\[
m_\v({\bf p},q,{\bf d})\leq E_\v(\tilde{u}_\v)<E_\v(u_\v)+\pi\tae\left(({\bf p},q),\deg(u_\v,\dom)\right)\leq \liminf E_\v(u_n^\v)=m_\v({\bf p},q,{\bf d})
\]
which is a contradiction.

Thus for $\v<\v_2$, $u_\v\in\J_{{\bf p},q}^{\bf d}$ and consequently $u_\v$ is a minimizer of $E_\v$ in $\J_{{\bf p},q}^{\bf d}$.

\appendix
\renewcommand{\theequation}% 
{\thesection \Alph{equation}} 
\numberwithin{equation}{section}
%\begin{subappendices}
\section{Results used in the proof of Lemma \ref{L3.15soft}}\label{3.Apsoft}

\subsection{Power series expansions}\label{3.AP1}
For $X\in\C$, $|X|<1$, we have
\begin{equation}\label{3A.1}
\sum_{k\geq1}{\frac{|X|^k}{k}}=-\ln(1-|X|),
\end{equation}
\begin{equation}\label{3A.2}
\sum_{k\geq0}{X^k}=\frac{1}{1-X},
\end{equation}
\begin{equation}\label{3A.3}
\sum_{k\geq1}{kX^k}=\frac{X}{(1-X)^2},
\end{equation}
\begin{equation}\label{3A.4}
\sum_{k>0}{\sin (k\delta) X^{k}}=\frac{X\sin \delta}{1-2X\cos\delta+X^2},
\end{equation}
\begin{equation}\label{3A.5}
\sum_{k>0}{\frac{\sin (k\delta)}{k}X^{k}}=\arctan \left(\frac{X-\cos\delta}{\sin\delta}\right)+\arctan \left(\frac{\cos\delta}{\sin\delta}\right),
\end{equation}
\begin{equation}\label{3A.6}
\sum_{n,l>0}{\sin (n\delta)\frac{l}{n+2l}X^{n+2l}}=\frac{X+\cos\delta}{4(1-X^2)\sin\delta}-\frac{1}{4\sin^2\delta}\arctan\left(\frac{X-\cos\delta}{\sin\delta}\right)+\text{Cst}(\delta).
\end{equation}
{\bf Proof: }The first four identities are classical.  We sketch the argument that leads to (\ref{3A.5}) and (\ref{3A.6}). The identity (\ref{3A.5}) follows from (\ref{3A.4}) by integration. 

We next prove (\ref{3A.6}). Let 
\[
f(X)=\sum_{n,l>0}{\sin (n\delta)\frac{l}{n+2l}X^{n+2l}}.
\]
On the one hand, by (\ref{3A.3}), (\ref{3A.4}),
\[
f'(X)=\frac{1}{X}\sum_{n>0}{\sin (n\delta) X^{n}}\sum_{l>0}{lX^{2l}}=\frac{X^2\sin \delta}{(1-X^2)^2(1-2X\cos\delta+X^2)}.
\]
On the other hand
\[
\frac{\di}{\di X}\left(\frac{X+\cos\delta}{4\sin\delta(1-X^2)}-\frac{1}{4\sin^2\delta}\arctan\frac{X-\cos\delta}{\sin\delta}\right)=\frac{X^2\sin \delta}{(1-X^2)^2(1-2X\cos\delta+X^2)}.
\]
\subsection{Estimates for $f_k$ and $\alpha_k$}\label{3.AP2}
Recall that we defined, in section \ref{3.sectionPREUVE}, $f_k$ and $\alpha_k$ by
\begin{equation}\no%\tag{\ref{3.fk}}
f_k(h)=\frac{\e^{\alpha_k( h-1)}}{1-\e^{-2\alpha_k\delta}}+\frac{\e^{-\alpha_k (h-1)}}{1-\e^{2\alpha_k\delta}},
\end{equation}
\[
\d\alpha_k=\sqrt{k^2+\lambda-1}.
\]
In this part, we prove the following inequalities:
\begin{equation}\label{3A.0}
\alpha_k= |k|+\mathcal{O}\left(\frac{1}{|k|+1}\right),
\end{equation}
\begin{equation}\label{3A.7}
|f_k(h)-\e^{-|k|(1-h)}|\leq\frac{C}{k^2},\text{ with }C\text{ independent of }k\in\Z^*,\,h\in(1-\delta,1),
\end{equation}
\begin{equation}\label{3A.8}
|f'_k(h)-|k|\e^{-|k|(1-h)}|\leq\frac{C}{|k|},\text{ with }C\text{ independent of }k\in\Z^*,\,h\in(1-\delta,1).
\end{equation}
{\bf Proof: }The first assertion is obtained using a Taylor expansion.

Let $g_h(u)=\e^{u( h-1)}$, we have 
\begin{eqnarray*}
|f_k(h)-\e^{-|k|(1-h)}|&\leq&|g_h(\alpha_k)-g_h(|k|)|+\frac{C}{k^2}
\\&\leq& \sup_{(|k|,\alpha_k)}|g_h'(u)||\alpha_k-|k||+\frac{C}{k^2}\leq\frac{1}{\e k}\frac{1}{2 k}+\frac{C}{k^2}\leq\frac{C}{k^2}.
\end{eqnarray*}
The proof of \eqref{3A.8} is similar, one uses $\tilde{g}_h(u)=u\e^{u(h-1)}$ instead of $g_h$

\subsection{Further estimates on $f_k$ and $\alpha_k$}\label{3.AP3}
We have
\begin{eqnarray}\no
0\leq\int_{1-\delta}^1{\left\{{f'_k}^2-\alpha_k^2{f_k}^2\right\}}&\leq&\int_{1-\delta}^1{\left\{{f'_k}^2-k^2{f_k}^2\right\}}
\\\label{3A.9}
&\leq&\frac{C}{|k|+1},\text{ with }C\text{ independent of }k\in\Z,
\end{eqnarray}
\begin{equation}\label{3A.10}
\left|\int_{1-\delta}^1{f_kf_l}\right|\leq\frac{C}{\max(|k|,|l|)},\text{ with }C\text{ independent of }k,l\in\Z,\,\text{ s.t. }|k|\neq|l|,
\end{equation}
\begin{equation}\label{3A.11}
\left|\int_{1-\delta}^1{f'_kf'_l}\right|\leq C\left(\min(|k|,|l|)+1\right),\text{ with }C\text{ independent of $k,l\in\Z$, s.t. $|k|\neq|l|$}.
\end{equation}
{\bf Proof: }Actually \eqref{3A.10}, \eqref{3A.11} still hold when $|k|=|l|$, but this will not used in the proof of Lemma \ref{L3.15soft} and requires a separate argument.

Since $\alpha_k\geq |k|$,
\[
\int_{1-\delta}^1{\left\{{f'_k}^2-\alpha_k^2{f_k}^2\right\}}\leq\int_{1-\delta}^1{\left\{{f'_k}^2-k^2{f_k}^2\right\}}.
\]
By direct computations,
\[
0\leq\int_{1-\delta}^1{\left\{{f'_k}^2-\alpha_k^2{f_k}^2\right\}}=\frac{4\delta\alpha_k^2}{(1-\e^{-2\alpha_k\delta})(\e^{2\alpha_k\delta}-1)}\leq\frac{C(\delta,n)}{k^n},\phantom{aaaaa}\forall n\in\N^*,
\]
\[
\int_{1-\delta}^1{\left\{{f'_k}^2-k^2{f_k}^2\right\}}=\int_{1-\delta}^1{\left\{{f'_k}^2-\alpha_k ^2{f_k}^2\right\}}+(\lambda-1)\int_{1-\delta}^1{{f_k}^2},
\]
\[
\int_{1-\delta}^1{{f_k}^2}=\frac{1}{2\alpha_k}\left(\frac{1}{1-\e^{-2\alpha_k\delta}}-\frac{1}{1-\e^{2\alpha_k\delta}}\right)+\mathcal{O}\left(\frac{1}{|k|+1}\right)=\mathcal{O}\left(\frac{1}{|k|+1}\right).%-\frac{\e^{-2\alpha_k\delta}}{(1-\e^{-2\alpha_k\delta})^2}+\frac{\e^{2\alpha_k\delta}}{(1-\e^{2\alpha_k\delta})^2}\right)+\frac{2\delta}{(1-\e^{2\alpha_k\delta})(1-\e^{2\alpha_k\delta})}
\]Which proves \eqref{3A.9}.
\\For $|k|\neq|l|$, we have
\begin{eqnarray}\no%\label{3.est3}
\left|\int_{1-\delta}^1f_kf_l\right|&=&\left|\frac{1-\e^{-2(\alpha_k+\alpha_l)\delta}}{(\alpha_k+\alpha_l)(1-\e^{-2\alpha_k\delta})(1-\e^{-2\alpha_l\delta})}-\frac{1-\e^{-2(\alpha_k-\alpha_l)\delta}}{(\alpha_k-\alpha_l)(1-\e^{-2\alpha_k\delta})(\e^{2\alpha_l\delta}-1)}\right|
\\\no
&\leq&\frac{C}{\max(|k|,|l|)}+\left|\frac{1-\e^{-2(\alpha_k-\alpha_l)\delta}}{(\alpha_k-\alpha_l)(1-\e^{-2\alpha_k\delta})(\e^{2\alpha_l\delta}-1)}\right|.
\end{eqnarray}
We assume that $|k|>|l|$ and we consider the two following cases: $\alpha_l<\alpha_k\leq2\alpha_l$ and $\alpha_k>2\alpha_l$. Noting that $\frac{1-\e^{-2x\delta}}{x}$ is bounded for $x\in\R^*_+$, we have
\[
\left|\frac{1-\e^{-2(\alpha_k-\alpha_l)\delta}}{(\alpha_k-\alpha_l)(1-\e^{-2\alpha_k\delta})(\e^{2\alpha_l\delta}-1)}\right|\leq\frac{C}{\e^{2\alpha_l\delta}}\leq \frac{C}{\max(|k|,|l|)}\text{ if }\alpha_l<\alpha_k\leq2\alpha_l,
\]
\[
\left|\frac{1-\e^{-2(\alpha_k-\alpha_l)\delta}}{(\alpha_k-\alpha_l)(1-\e^{-2\alpha_k\delta})(\e^{2\alpha_l\delta}-1)}\right|\leq\frac{C}{\alpha_k-\alpha_l}\leq \frac{C}{\max(|k|,|l|)}\text{ if }\alpha_k>2\alpha_l.
\]This proves \eqref{3A.10}.
\\For $|k|\neq|l|$, 
\begin{equation}\no%\label{3.est4}
\int_{1-\delta}^1f'_kf'_l=\frac{\alpha_k\alpha_l\left(1-\e^{-2(\alpha_k+\alpha_l)\delta}\right)}{(\alpha_k+\alpha_l)(1-\e^{-2\alpha_k\delta})(1-\e^{-2\alpha_l\delta})}+\frac{\alpha_k\alpha_l\left(1-\e^{-2(\alpha_k-\alpha_l)\delta}\right)}{(\alpha_k-\alpha_l)(1-\e^{-2\alpha_k\delta})(\e^{2\alpha_l\delta}-1)}.
\end{equation}
It is clear that,
\begin{equation}\label{3.et1}
\frac{\alpha_k\alpha_l(1-\e^{-2(\alpha_k+\alpha_l)\delta})}{(\alpha_k+\alpha_l)(1-\e^{-2\alpha_k\delta})(1-\e^{-2\alpha_l\delta})}\leq C\frac{\alpha_k\alpha_l}{\alpha_k+\alpha_l}\leq C\left[\min(|k|,|l|)+1\right].
\end{equation}
As in the proof of (\ref{3A.10}), we have 
\begin{equation}\label{3.et2}
\left|\frac{\alpha_k\alpha_l(1-\e^{-2(\alpha_k-\alpha_l)\delta})}{(\alpha_k-\alpha_l)(1-\e^{-2\alpha_k\delta})(\e^{2\alpha_l\delta}-1)}\right|\leq \frac{C\alpha_k\alpha_l}{\max(|k|,|l|)}\leq C\left[\min(|k|,|l|)+1\right].
\end{equation}
Inequalities \eqref{3A.11} follows from \eqref{3.et1} and \eqref{3.et2}.
\subsection{Two fundamental estimates}\label{3.AP4}
In this part, we let $k>l\geq0$ and prove the following:
\begin{equation}\label{3A.13}
X_{k,l}:=\frac{(\alpha_k\alpha_l+kl+\lambda-1)(1-\e^{-2(\alpha_k+\alpha_l)\delta})}{(\alpha_k+\alpha_l)(1-\e^{-2\alpha_k\delta})(1-\e^{-2\alpha_l\delta})}=\frac{2kl}{k+l}+\mathcal{O}\left(\frac{1}{l+1}\right),
\end{equation}
\begin{equation}\label{3A.14}
Y_{k,l}:=\frac{(\alpha_k\alpha_l+kl+\lambda-1)(1-\e^{-2(\alpha_k-\alpha_l)\delta})}{(\alpha_k-\alpha_l)(1-\e^{-2\alpha_k\delta})(\e^{2\alpha_l\delta}-1)}\leq C\e^{-\delta l}.
\end{equation}
The computations are direct:
\begin{eqnarray*}
X_{k,l}-\frac{2kl}{k+l}&=&\frac{2kl}{(\alpha_k+\alpha_l)(1-\e^{-2\alpha_k\delta})(1-\e^{-2\alpha_l\delta})}-\frac{2kl}{k+l}+\mathcal{O}\left(\frac{1}{l+1}\right)
\\&=&2kl\frac{k+l-(\alpha_k+\alpha_l)(1-\e^{-2\alpha_k\delta})(1-\e^{-2\alpha_l\delta})}{(k+l)(\alpha_k+\alpha_l)(1-\e^{-2\alpha_k\delta})(1-\e^{-2\alpha_l\delta})}+\mathcal{O}\left(\frac{1}{l+1}\right)
\\&=&\frac{\mathcal{O}\left(k+k^2l\e^{-l\delta/2}\right)}{(k+l)(\alpha_k+\alpha_l)(1-\e^{-2\alpha_k\delta})(1-\e^{-2\alpha_l\delta})} +\mathcal{O}\left(\frac{1}{l+1}\right)
\\&=&\mathcal{O}\left(\frac{1}{l+1}\right).
\end{eqnarray*}
We now turn to \eqref{3A.14}.

If $\d\alpha_k\geq2\alpha_l$ (or equivalently, if $\alpha_k-\alpha_l\geq\frac{\alpha_k}{2}$), then
\begin{eqnarray*}
\frac{(\alpha_k\alpha_l+kl+\lambda-1)(1-\e^{-2(\alpha_k-\alpha_l)\delta})}{(\alpha_k-\alpha_l)(1-\e^{-2\alpha_k\delta})(\e^{2\alpha_l\delta}-1)}&\leq& C\frac{kl}{\alpha_k}\e^{-2\alpha_l\delta}\leq C\e^{-\delta l}.
\end{eqnarray*}
If $\d\alpha_k<2\alpha_l$, then
\begin{eqnarray*}
\frac{(\alpha_k\alpha_l+kl+\lambda-1)(1-\e^{-2(\alpha_k-\alpha_l)\delta})}{(\alpha_k-\alpha_l)(1-\e^{-2\alpha_k\delta})(\e^{2\alpha_l\delta}-1)}&\leq& Cl^2\e^{-2\alpha_l\delta} \leq C\e^{-\delta l}.
\end{eqnarray*}
\section{Proof of Proposition \ref{P3.1} and of Lemma \ref{L3.18}}\label{3.Apres}
\subsection{Proof of Proposition \ref{P3.1}}

The proof of 1) is direct by noting that if $u\in H^1(\dom,\mathbb{S}^1)$, then $\p_1u$ and $\p_2u$ are pointwise proportional and $\deg_{\p\Omega}(u)=\sum_i\deg_{\p\omega_i}(u)$,
\begin{eqnarray}\no
\ab_i(u,\dom)&=&\frac{1}{2\pi}\sum_{k=1,2}{(-1)^{k}\int_{ \dom}{(u\times\p_{k}u)\p_{3-k}V_i}}\\\no&=&\frac{1}{2\pi}\int_{\p \dom}{V_i\, u\times\p_\tau u\di\tau}=\deg_{\p\O}(u)-\sum_{j\neq i}\deg_{\p\omega_j}(u)=\deg_{\p\omega_i}(u).
\end{eqnarray}
Proof of 2). Since $V_i$ is locally constant on $\p \dom$, integrating by parts, 
\[
\int_\dom{v\times(\p_1u\,\p_2V_i-\p_2u\,\p_1V_i)\di x}=\int_\dom{u\times(\p_1v\,\p_2V_i-\p_2v\,\p_1V_i)\di x}.
\]
Then
\[
\begin{array}{ccl}
2\pi|\ab_i(u)-\ab_i(v)|&=&\d\Big|\int_\dom(u-v)\times\Big[(\p_1\,V_i\,\p_2u-\p_2V_i\,\p_1u)
\\&&\phantom{aaaaaaaqqqqqqqqqqa}+(\p_1\,V_i\,\p_2v-\p_2V_i\,\p_1v)\Big]\di x\Big|
\\
&\leq&\d\sqrt2\|u-v\|_{L^2(\dom)}\|V_i\|_{C^1(\dom)}(\|\n u\|_{L^2(\dom)}+\|\n v\|_{L^2(\dom)})
\\
&\leq&\d2\|u-v\|_{L^2(\dom)}\|V_i\|_{C^1(\dom)}[E_\v(u)^{1/2}+E_\v(v)^{1/2}]
\\
&\leq&\d4\|u-v\|_{L^2(\dom)}\|V_i\|_{C^1(\dom)}\Lambda^{1/2}.
\end{array}
\]
We prove assertion 3) by showing that $\dist(\ab_i(u_\v),\Z)=o(1)$. Using the first and the second assertion, we have  
\begin{eqnarray}\nonumber
\dist(\ab_i(u_\v),\Z)&\leq&\inf_{v\in E_0^\Lambda}{|\ab_i(u_\v)-\ab_i(v)|}
\\\label{3.7}
&\leq&\frac{2}{\pi}\|V_i\|_{C^1(\dom)}\Lambda^{1/2}\inf_{v\in E_0^\Lambda}{\|u_\v-v\|_{L^2(\dom)}}
\end{eqnarray}
where $\d E_0^\Lambda:=\left\{u\in H^1(\dom,\mathbb{S}^1)\text{ s.t. }\frac{1}{2}\int_\dom|\n u|^2\di x\leq\Lambda\right\}\neq\emptyset$.

Now, it suffices to show that $\inf_{v\in E_0^\Lambda}{\|u_\v-v\|_{L^2(\dom)}}\rightarrow0$. We argue by contradiction and we assume that there is an extraction $(\v_n)_n\downarrow0$ and $\delta>0$ s.t. for all $n$, $\d\inf_{v\in E_0^\Lambda}{\|u_{\v_n}-v\|_{L^2(\dom)}}>\delta$. 

We see that $(u_{\v_n})_n$ is bounded in $H^1$. Then, up to subsequence, $u_n$ converges to $u\in H^1(\dom,\R^2)$ weakly in $H^1$ and strongly in $L^4$. 

Since $\||u_{\v_n}|^2-1\|_{L^2(\dom)}\rightarrow0$, we have $u\in H^1(\dom,\mathbb{S}^1)$ and by weakly convergence, $\|\n u\|_{L^2(\dom)}^2\leq2\Lambda$. 

To conclude, we have $u\in E_0^\Lambda$ et $\|u_{\v_n}-u\|_{L^2}\rightarrow0$, which is a contradiction. 

\subsection{Proof of Lemma \ref{L3.18}}
1) We see easily that, with $z=\e^{\imath\theta}$, we have
\begin{equation}\label{3.egtrig}
\frac{\Psi_t(\overline{z})-\F_t(\overline{z})}{t}=\frac{(1-\varphi(\theta))(1-z^2)}{\left[z(1-t)-1\right]\left[z(1-t\varphi(\theta))-1\right]}\equiv\frac{A(\theta,t)}{B(\theta,t)}.
\end{equation}
The modulus of the RHS of (\ref{3.egtrig}) can be bounded by noting that 
\begin{description}
\item[ $\bullet$] there is some $m>0$ s.t. $|B(\theta,t)|\geq m$ for each $t$ and each $\theta$ s.t. $|\theta|>\delta/2$ mod $2\pi$;
\item[ $\bullet$] there is some $M>0$ s.t. $|A(\theta,t)|\leq M$ for each $t$ and each $\theta$ s.t. $|\theta|>\delta/2$ mod $2\pi$;
\item[ $\bullet$] if $|\theta|\leq\delta/2$ (modulo $2\pi$), then $\left(\Psi_t-\F_t\right)t^{-1}\equiv0$. %the modulus is big when $\theta$ is not far $2\pi\Z$, if $
\end{description}

2) This assertion is a standard expansion.

3) With a classical result relating regularity of $\Psi_t-\mathcal{F}_t$ to the asymptotic behaviour of its Fourier coefficients, we have
\[
|b_k(t)-c_k(t)|\leq \frac{2^{n+1}\pi\|\p_\theta^{^n}\left(\Psi_t-\F_t\right)\|_{L^\infty(\mathbb{S}^1)}}{t\left(1+|k|\right)^{n}}.
\]
Noting that, for $\d\p_\theta^{^n}\left(\Psi_t-\F_t\right)t^{-1}\equiv\frac{A_n(\theta,t)}{B_n(\theta,t)}$ 
\begin{description}
\item[ $\bullet$] there is some $m_n>0$ s.t. $|B_n(\theta,t)|\geq m_n$ for each $t$ and each $\theta$ s.t. $|\theta|>\delta/2$ mod $2\pi$;
\item[ $\bullet$] there is some $M_n>0$ s.t. $|A_n(\theta,t)|\leq M_n$ for each $t$ and each $\theta$ s.t. $|\theta|>\delta/2$ mod $2\pi$;
\item[ $\bullet$] if $|\theta|\leq\delta/2$ (modulo $2\pi$), then $\left(\Psi_t-\F_t\right)t^{-1}\equiv0$.
\end{description}
Thus the result follows.

\subsection{Proof of Lemma \ref{L3.simplification}}

The key argument to treat the energetic contribution of $D_\delta^\pm$ is the following lemma.
\begin{lem}\label{L3.19}
 \begin{enumerate}
\item $|\tilde{\psi_t}(h,\pm\delta)-1|=\mathcal{O}(t)$;
\item $|\p_h\tilde{\psi_t}(h,\pm\delta)|=\mathcal{O}(t|\ln t|)$.
\end{enumerate}

\end{lem}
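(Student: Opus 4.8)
The plan is to read everything off the explicit expansion \eqref{3.59}. Evaluating the $D'_\delta$-formula at $\theta=\pm\delta$ gives
\[
\tilde{\psi_t}(h,\pm\delta)-1=t\,\sigma_\pm(h,t),\qquad \p_h\tilde{\psi_t}(h,\pm\delta)=t\,\tau_\pm(h,t),
\]
where $\sigma_\pm(h,t)=-b_{-1}(t)f_{-1}(h)+\sum_{k\neq-1}b_k(t)f_k(h)\e^{\mp(k+1)\imath\delta}$ and $\tau_\pm$ is obtained by replacing $f_k,f_{-1}$ with $f'_k,f'_{-1}$. Hence the lemma reduces to the two bounds $|\sigma_\pm(h,t)|\le C$ and $|\tau_\pm(h,t)|\le C|\ln t|$, uniform in $h\in[1-\delta,1]$ and in $t$ small (constants depending only on $\delta,\lambda,\varphi$). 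The tools I would use are: $0\le f_k\le1$ and $|f'_k|\le C(1+|k|)$ on $[1-\delta,1]$, the approximations $f_k(h)=\e^{-|k|(1-h)}+\mathcal{O}(k^{-2})$ and $f'_k(h)=|k|\e^{-|k|(1-h)}+\mathcal{O}(|k|^{-1})$ from \eqref{3A.7}--\eqref{3A.8}, and the description of the Fourier coefficients from Lemma \ref{L3.18}: $b_k=c_k+\mathcal{O}((1+|k|)^{-n})$ for every $n$ (constant independent of $t$ small), with $c_k=(t-2)(1-t)^k$ for $k\ge0$, $c_k=0$ for $k\le-2$ and $c_{-1}=1$.

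Next I would split both series into the pieces $k\le-2$, $k=0$, $k\ge1$, and for $k\ge1$ further write $b_k=c_k+(b_k-c_k)$. Every contribution except the $c_k$-part of the tail $k\ge1$ is \emph{absolutely} summable, uniformly in $t$: the $k=-1$ term is bounded by $C|b_{-1}|$; for $k\le-2$ one has $b_k=\mathcal{O}((1+|k|)^{-n})$ since $c_k=0$, so (taking $n=3$) $\sum_{k\le-2}|b_k|(|f_k|+|f'_k|)<\infty$; the $k=0$ term is bounded; and the remainders $(b_k-c_k)$ for $k\ge1$ contribute $\sum_{k\ge1}(1+k)^{-n}(|f_k|+|f'_k|)<\infty$. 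So everything comes down to the two tails $\Sigma_\pm:=\sum_{k\ge1}(1-t)^k f_k(h)\e^{\mp k\imath\delta}$ and $T_\pm:=\sum_{k\ge1}(1-t)^k f'_k(h)\e^{\mp k\imath\delta}$, since $\sum_{k\ge1}c_kf_k\e^{\mp(k+1)\imath\delta}=(t-2)\e^{\mp\imath\delta}\Sigma_\pm$ (and $|t-2|\le2$), and likewise for $T_\pm$.

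For $\Sigma_\pm$ I would substitute $f_k(h)=\e^{-k(1-h)}+\mathcal{O}(k^{-2})$; the error term is an absolutely convergent series, and the main term is the geometric series $\sum_{k\ge1}z_\pm^k=z_\pm/(1-z_\pm)$ with ratio $z_\pm:=(1-t)\e^{-(1-h)}\e^{\mp\imath\delta}$. The point is that $|z_\pm|=(1-t)\e^{-(1-h)}\in[\frac{1}{2}\e^{-\delta},1)$ for $t$ small, while $z_\pm$ is bounded away from $1$: writing $z_\pm=r\e^{\mp\imath\delta}$ one has $|1-z_\pm|^2=(1-r)^2+2r(1-\cos\delta)\ge \e^{-\delta}(1-\cos\delta)=:c_\delta^2>0$ since $\delta\in(0,1)\subset(0,\pi)$. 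Hence $|\Sigma_\pm|\le c_\delta^{-1}+C$, which yields $|\sigma_\pm|\le C$ and assertion 1. For $T_\pm$ the substitution $f'_k(h)=k\e^{-k(1-h)}+\mathcal{O}(k^{-1})$ gives a main term $\sum_{k\ge1}kz_\pm^k=z_\pm/(1-z_\pm)^2=\mathcal{O}(1)$ (same lower bound on $|1-z_\pm|$), but now the error term $\sum_{k\ge1}(1-t)^k\mathcal{O}(k^{-1})\e^{\mp k\imath\delta}$ must be estimated in modulus — the phase of the $\mathcal{O}(k^{-1})$ correction is not under control — giving $C\sum_{k\ge1}(1-t)^k/k=-C\ln(1-(1-t))=C|\ln t|$ by \eqref{3A.1}. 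Thus $|T_\pm|\le C|\ln t|$, hence $|\tau_\pm|\le C|\ln t|$, which is assertion 2.

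The only genuine difficulty is that the naive estimate — taking absolute values inside the tail $k\ge1$ — would give $\sum_{k\ge0}(1-t)^k=t^{-1}$ and hence only $|\tilde{\psi_t}(h,\pm\delta)-1|=\mathcal{O}(1)$; this is precisely why one must keep the oscillatory factor $\e^{\mp k\imath\delta}$, which turns the tail into a geometric series whose ratio stays at distance $\ge c_\delta$ from $1$ (this is where $\delta\ne0$ is used). The same cancellation is unavailable for the lower-order part of $f'_k$, whose Fourier phases are not explicit, and there the crude bound produces the extra, harmless factor $|\ln t|$ in assertion 2.
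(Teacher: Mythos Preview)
Your argument is correct and follows the same route as the paper: split $b_k=c_k+(b_k-c_k)$, use the rapid decay of $b_k-c_k$ from Lemma~\ref{L3.18} together with $0\le f_k\le1$, $|f'_k|\le C(1+|k|)$ to discard the remainder pieces, then replace $f_k,f'_k$ by $\e^{-|k|(1-h)},\,|k|\e^{-|k|(1-h)}$ via \eqref{3A.7}--\eqref{3A.8} and sum the resulting geometric series in $z_\pm=(1-t)\e^{-(1-h)\mp\imath\delta}$. Your explicit lower bound $|1-z_\pm|\ge c_\delta>0$ is exactly the point the paper uses implicitly when it writes $\big|\sum_{k\ge0}z_\pm^k\big|=\mathcal{O}(1)$ and $\big|\sum_{k\ge0}kz_\pm^k\big|=\mathcal{O}(1)$; and your identification of the $\mathcal{O}(k^{-1})$ correction in $f'_k$ as the source of the $|\ln t|$ (via \eqref{3A.1}) matches the paper's bookkeeping.
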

\begin{proof}(of Lemma \ref{L3.19})\\
Using Lemma \ref{L3.18}, \eqref{3A.2} and (\ref{3A.7}), we have
\begin{eqnarray}\no
t^{-1}|\tilde{\psi_t}(h,\delta)-1|&%=&\left|-b_{-1}f_{-1}(h)+\sum_{k\neq-1}{b_k(t)f_k(h)\e^{-\imath[(k+1)\delta]}}\right|
%\\\no&
\leq &\left|-c_{-1}f_{-1}(h)+\sum_{k\neq-1}{c_kf_k(h)\e^{-\imath[(k+1)\delta]}}\right|
\\\no&&\phantom{aaa}+ \left|-(b_{-1}-c_{-1})f_{-1}(h)+\sum_{k\neq-1}{(b_k-c_k)f_k(h)\e^{-\imath(k+1)\delta}}\right|
\\\no&\leq&C(\delta)\left\{\left|\sum_{k\geq0}\left((1-t)\e^{-(1-h)-\imath\delta}\right)^k\right|+1\right\}=\mathcal{O}(1).%\mathcal{O}\left(\frac{1}{|1-(1-t)\e^{-(1-h)-\imath\delta}|}\right)\leq C(\delta)
\end{eqnarray}

We prove that $|\p_h\tilde{\psi_t}(h,\delta)|=\mathcal{O}(t|\ln t|)$. Using Lemma \ref{L3.18}, \eqref{3A.3} and (\ref{3A.8}),
\begin{eqnarray*}
t^{-1}|\p_h\tilde{\psi_t}(h,\delta)|&\leq&\left|-c_{-1}f'_{-1}+\sum_{k\neq-1}{c_kf'_k\e^{-\imath(k+1)\delta}}\right|
\\&&\phantom{aaaaa}+\left|-(b_{-1}-c_{-1})f'_{-1}+\sum_{k\neq-1}{(b_k-c_k)f'_k\e^{-\imath(k+1)\delta}}\right|
\\
&\leq&2\left|\sum_{k\geq0}{k\left[(1-t)\e^{-\imath\delta-(1-h)}\right]^k}\right|+\mathcal{O}(|\ln t|)= \mathcal{O}(|\ln t|).
\end{eqnarray*}
\end{proof}

Using (\ref{3.totala1}), (\ref{3.gpm-decomposition}) and Lemma \ref{L3.19}, we have (with the notation of section \ref{3.sectionPREUVE}) that
\[
M_\lambda(w_t,D_\delta)=R_\lambda(w_t)+o(t),
\]
where
\begin{eqnarray*}
R_\lambda(w_t)&=&\delta t^2\sum_{k\in\Z}{b_k^2\phi_k(f_k)}-2t^2\sum_{k\neq-1}{b_{-1}b_k\frac{\sin[(k+1)\delta]}{k+1}\int_{1-\delta}^1{[f_{-1}'f_k'-(k-\lambda+1)f_{-1}f_k}]}
\\\no
&&\phantom{aaaaaaaaaaaaaaaa}+2t^2\sum_{\substack{k, l\neq-1 \\ k-l > 0}}{b_kb_l\frac{\sin[(k-l)\delta]}{k-l}\int_{1-\delta}^1{[f_k'f_l'+(kl+\lambda-1)f_kf_l]}}.
\end{eqnarray*}
The proof of Lemma \ref{L3.19} is completed  provided we establish the following estimate:
\begin{equation}\label{3.diffpasgrande}
R_\lambda(w_t)\leq \delta-2\delta t+4t^2\sum_{\substack{k, l\geq0 \\ k-l > 0 }}{c_kc_l\frac{\sin[(k-l)\delta]}{k-l} \frac{kl}{k+l}}+o(t).
\end{equation}
The remaining part of this appendix is devoted to the proof of \eqref{3.diffpasgrande}.
\\{\bf We estimate the first term of $R_\lambda$:}

Using (\ref{3.est1}) and Lemma \ref{L3.18}, we have (with $C$ independent of $t$)
\begin{equation}\label{3.80}
\left| \sum_{k\in\Z}{b_k^2\phi_k(f_k)}- \sum_{k\in\Z}{c_k^2\phi_k(f_k)}\right|\leq C.
\end{equation}

With (\ref{3.est1}) and (\ref{3A.0}), we obtain
\begin{eqnarray}\label{3.DL}
\phi_k(f_k)&=&\alpha(1+\frac{2}{\e^{2\alpha\delta}-1})= |k|+\mathcal{O}\left(\frac{1}{|k|+1}\right)\text{ when }|k|\rightarrow\infty.
\end{eqnarray}
From  (\ref{3A.1}), (\ref{3A.3}) and (\ref{3.DL}), 
\begin{eqnarray}\no
t^2 \sum_{k\in\Z}{c_k^2\phi_k(f_k)}
&=&   t^2\phi_{-1}(f_{-1})+ t^2(t-2)^2\sum_{k\geq0}{(1-t)^{2k}\phi_{k}(f_k)}
\\\label{3.partie1}
&=& t^2(t-2)^2\sum_{k>0}{k(1-t)^{2k}}+o(t)=1-2t+o(t).
\end{eqnarray}
{\bf We estimate the second term of $R_\lambda$:}

Using Lemma \ref{L3.18}, (\ref{3A.10}) and (\ref{3A.11}), we have (with $C$ independent of $t$) 
\begin{equation}\no
\left|\sum_{k\neq-1}{(b_k-c_k)\frac{\sin[(k+1)\delta]}{k+1}\int_{1-\delta}^1{[f_{-1}'f_k'-(k-\lambda+1)f_{-1}f_k]}}\right|\leq  C.
\end{equation}
Since $b_{-1}(t)$ is bounded by a quantity independent of $t$, in the order to estimate the third term of the RHS of (\ref{3.totala}), we observe that there is $C$ independent of $t$ s.t.
\begin{eqnarray*}
\left|\sum_{k\geq0}{(1-t)^k\frac{\sin[(k+1)\delta]}{k+1}\int_{1-\delta}^1{[f_{-1}'f_k'-(k-\lambda+1)f_{-1}f_k]}}\right|&\leq& C\left( \sum_{k\geq 1}{\frac{(1-t)^k}{k}} +1\right)\\&=&C(|\ln t|+1).
\end{eqnarray*}
Finally, using Lemma \ref{L3.18}, (\ref{3.est3}) and (\ref{3.est4}), we have
\begin{equation}\label{3.part3}
\left|\sum_{k\neq-1}{b_k\frac{\sin[(k+1)\delta]}{k+1}\int_{1-\delta}^1{[f_{-1}'f_k'-(k-\lambda+1)f_{-1}f_k]}}\right|\leq C(|\ln t|+1).
\end{equation}
{\bf We estimate the last term of $R_\lambda$:}

First, we consider the case $k=-l>0$ (\emph{i.e.}, $f_k=f_l$). Using (\ref{3.est2}), $0\leq f_k\leq1$ and (\ref{3A.9}), we have (with $C$ independent of $t$)
\[
\left|\sum_{k>0}{b_kb_{-k}\frac{\sin2k\delta}{2k}\int_{1-\delta}^1{[f_k'^2+(-k^2+\lambda-1)f_k^2]}}\right|\leq C.%\neq C(t).
\]
It remains to estimate the last sum in $R_\lambda$, considered only over the indices $k$ and $l$ s.t. $|k|\neq|l|$. We start with 
\begin{eqnarray}\label{3.sum-dec}
&&\sum_{\substack{k, l\neq-1 \\ k-l > 0,k\neq-l }}{(b_kb_l-c_kc_l)\frac{\sin[(k-l)\delta]}{k-l}\int_{1-\delta}^1{[f_k'f_l'+(kl+\lambda-1)f_kf_l]}}
\\\no
&&=\sum_{\substack{k, l\neq-1 \\ k-l > 0,k\neq-l }}\left[(b_k-c_k)(b_l-c_l)+c_k(b_l-c_l)+c_l(b_k-c_k)\right]*\\\no&&\phantom{aaaaaaaaaa}*\frac{\sin[(k-l)\delta]}{k-l} \int_{1-\delta}^1{[f_k'f_l'+(kl+\lambda-1)f_kf_l]}.
\end{eqnarray}
By Assertion 3) of Lemma \ref{L3.18}, the first sum of the RHS of (\ref{3.sum-dec}) is easily bounded by a quantity independent of $t$. By (\ref{3A.10}), (\ref{3A.11}) and Lemma \ref{L3.18},
\begin{eqnarray*}
\Bigg|\sum_{\substack{k, l\neq-1 \\ k-l > 0,k\neq-l }}c_k(b_l-c_l)\frac{\sin[(k-l)\delta]}{k-l}&& \int_{1-\delta}^1{[f_k'f_l'+(kl+\lambda-1)f_kf_l]}\Bigg|
\\&&\leq C\sum_{\substack{k\geq0, l\neq-1 \\ k-l > 0,k\neq-l }}{\frac{ (1-t)^k|b_l-c_l||l|}{k-l}}+C.
%\\&\leq&C\sum_{\substack{n>0\\ l\in\Z  }}{\frac{ (1-t)^n|b_l-c_l||l|}{k-l}}+C.
\end{eqnarray*}
On the other hand (putting $n=k-l$),
\begin{eqnarray*}
\sum_{\substack{k\geq0, l\neq-1 \\ k-l > 0,k\neq-l }}{\frac{ (1-t)^k|b_l-c_l||l|}{k-l}}&\leq&\sum_{\substack{k>l\geq0 }}{\frac{ (1-t)^k|b_l-c_l|l}{k-l}}+\sum_{\substack{k\geq0, l\leq-1}}{\frac{ (1-t)^k|b_l-c_l||l|}{k+|l|}}
\\
&\leq&\sum_{l\geq0,n>0 }{\frac{ (1-t)^{n}}{n}|b_l-c_l|l}+\!\!\!\sum_{k>0, l\leq-1}{\frac{ (1-t)^k}{k}|b_l-c_l||l|}
\\&=& \mathcal{O}(|\ln t|).
\end{eqnarray*}
Similarly, we may prove that
\[
\left|\sum_{\substack{k, l\neq-1 \\ k-l > 0,k\neq-l }}c_l(b_k-c_k)\frac{\sin[(k-l)\delta]}{k-l} \int_{1-\delta}^1{[f_k'f_l'+(kl+\lambda-1)f_kf_l]}\right|=\mathcal{O}(|\ln t|).
\]
We have thus proved that
\[
\left|\sum_{\substack{k, l\neq-1 \\ k-l > 0,k\neq-l }}{(b_kb_l-c_kc_l)\frac{\sin[(k-l)\delta]}{k-l}\int_{1-\delta}^1{[f_k'f_l'+(kl+\lambda-1)f_kf_l]}}\right|=o(t^{-1}).
\]
To finish the proof, it suffices to obtain
\begin{eqnarray*}
\sum_{\substack{k, l\neq-1 \\ k-l > 0,k\neq-l }}c_kc_l\frac{\sin[(k-l)\delta]}{k-l}&&\int_{1-\delta}^1{[f_k'f_l'+(kl+\lambda-1)f_kf_l]}\\&&\phantom{aaaaaaaa}= 2\sum_{\substack{k, l\geq0 \\ k-l > 0 }}{c_kc_l\frac{\sin[(k-l)\delta]}{k-l} \frac{kl}{k+l}}+o(t^{-1}).
\end{eqnarray*}

Since $c_m=0$ for $m<-1$, it suffices to consider the case $k>l\geq0$. Under these hypotheses, we have by (\ref{3.est3}), (\ref{3.est4}), (\ref{3A.13}) and (\ref{3A.14}), 
\begin{eqnarray*}
\sum_{k>l\geq0 }{c_kc_l\frac{\sin[(k-l)\delta]}{k-l}\int_{1-\delta}^1{[f_k'f_l'+(kl+\lambda-1)f_kf_l]}}&=& 2\sum_{k>l\geq0 }{c_kc_l\frac{\sin[(k-l)\delta]}{k-l} \frac{kl}{k+l}}
\\&&+\mathcal{O}\left(\sum_{k> l\geq0}{\frac{c_kc_l|\sin[(k-l)\delta]|}{k-l}\frac{1}{l+1}}\right).
\end{eqnarray*}
We conclude by noting that
\[
\left|\sum_{k>l\geq0 }{c_kc_l\left|\frac{\sin[(k-l)\delta]}{(k-l)(l+1)}\right|}\right|\leq  C\left(1+\sum_{n>0}{\frac{(1-t)^n}{n}}\sum_{l>0}{\frac{(1-t)^{2l}}{l}}\right)\leq C(1+\ln^2 t).
\]
\section{Proof of Lemma {\ref{P3.bougedegmult}}}\label{3.bougerledeg}
\begin{lem}\label{L3.lemmebougedeg1}
Let $0<\eta,\delta<1$, there is
\begin{equation}\label{3.movedeg}
\begin{array}{cccc}
M_{\eta,\delta}:&D(0,1)&\rightarrow&\C\\&x&\mapsto&M_{\eta,\delta}(x)
\end{array}\text{ s.t.:}
\end{equation}
 \begin{enumerate}[i)]
 \item  $|M_{\eta,\delta}|=1$ on $\mathbb{S}^1$, $\deg_{\mathbb{S}^1}(M_{\eta,\delta})=1$,
 \item $\d\frac{1}{2}\int_{D(0,1)}|\n M_{\eta,\delta}|^2\leq\pi+\eta$,
\item $|M_{\eta,\delta}|\leq2$
\item if $|\theta|>\delta\text{ mod }2\pi$, then $M_{\eta,\delta}(\e^{\imath\theta})=1$. 
 \end{enumerate}
\end{lem}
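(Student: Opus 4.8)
The plan is to start from a Blaschke factor — which realizes the minimal degree‑one Dirichlet energy $\pi$ but is \emph{never} constant on a boundary arc — and then to cut it off to the constant $1$ inside a thin boundary collar, paying only the prescribed error $\eta$. For $a\in(0,1)$ close to $1$ set $N_a(z)=\dfrac{a-z}{1-az}$, a conformal automorphism of $D(0,1)$, so that $|N_a|\le1$ on $\overline{D(0,1)}$, $|N_a|\equiv1$ and $\deg_{\mathbb S^1}(N_a)=1$ on $\mathbb S^1$, and, $N_a$ being conformal onto $D(0,1)$,
\[
\tfrac12\int_{D(0,1)}|\n N_a|^2\,\di x=\int_{D(0,1)}|N_a'|^2\,\di x=\pi .
\]
A direct computation gives $N_a'(z)=-\dfrac{1-a^2}{(1-az)^2}$ and $N_a(\e^{\imath\theta})-1=-\dfrac{(1-a)(1+\e^{\imath\theta})}{1-a\e^{\imath\theta}}$; since $|1-a\e^{\imath\theta}|^2=(1-a)^2+2a(1-\cos\theta)$, putting $t:=1-a$ one gets, for $t$ small,
\[
|N_a-1|\le C_\delta\,t\ \text{ and }\ |\partial_\theta N_a|\le C_\delta\,t\ \text{ on }\{|\theta|\ge\delta/2\},\qquad \int_{\mathbb S^1}|\partial_\theta N_a|^2\,\di\theta\le \frac{C}{t}.
\]

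Next I would perform the cut‑off. Fix $r\in(0,1/2)$, to be chosen. On $\{|z|\le1-r\}$ put $M(z):=N_a\bigl(z/(1-r)\bigr)$; by conformal (scale) invariance of the Dirichlet integral in dimension two this piece still has $\tfrac12\int|\n M|^2=\pi$, satisfies $|M|\le1$, and equals $g:=N_a|_{\mathbb S^1}$ on the circle $\{|z|=1-r\}$. On $\mathbb S^1$ I define $\tilde g:=g$ for $|\theta|\le\delta/2$, $\tilde g:=1$ for $|\theta|\ge\delta$, and on $\delta/2\le|\theta|\le\delta$ I let $\tilde g$ be the shortest‑arc geodesic interpolation on $\mathbb S^1$ between $g$ and $1$ (well defined and $C^1$, since $g$ is $C_\delta t$‑close to $1$ there); then $\tilde g$ is $\mathbb S^1$‑valued, equals $1$ on $\{|\theta|\ge\delta\}$, and is homotopic through $\mathbb S^1$‑valued maps to $g$, hence $\deg_{\mathbb S^1}(\tilde g)=1$. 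Finally I fill the collar by the radial convex interpolation
\[
M\bigl((1-r+\rho r)\e^{\imath\theta}\bigr):=(1-\rho)\,g(\e^{\imath\theta})+\rho\,\tilde g(\e^{\imath\theta}),\qquad \rho\in[0,1].
\]
This $M$ is continuous (it equals $g$ on $\{|z|=1-r\}$ from both sides and $\tilde g$ on $\mathbb S^1$), satisfies $|M|\le1\le2$, and since $M|_{\mathbb S^1}=\tilde g$ it already fulfils (i), (iii) and (iv).

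It remains to estimate the collar energy and to fix the two parameters. In the collar, writing $s=1-r+\rho r\in[1-r,1]$, one has $|\partial_s M|^2=r^{-2}|\tilde g-g|^2$ and $s^{-2}|\partial_\theta M|^2\le 8\bigl(|g'|^2+|\tilde g'|^2\bigr)$, whence
\[
\tfrac12\!\!\int_{\{1-r\le|z|\le1\}}\!\!\!|\n M|^2\,\di x\ \le\ \frac1{2r}\int_{\mathbb S^1}|\tilde g-g|^2\,\di\theta\ +\ 4r\int_{\mathbb S^1}\bigl(|g'|^2+|\tilde g'|^2\bigr)\,\di\theta\ \le\ \frac{C_\delta\,t^2}{2r}+\frac{4\,C_\delta\,r}{t},
\]
using $\int_{\mathbb S^1}|\tilde g-g|^2\le C_\delta t^2$ (the integrand vanishes on $|\theta|\le\delta/2$ and is $\le(C_\delta t)^2$ elsewhere) and $\int_{\mathbb S^1}(|g'|^2+|\tilde g'|^2)\le C_\delta/t$ (from the estimates above, the interpolation contributing only an $O(t^2)$ term). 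Taking $r:=t^{3/2}$ bounds the collar energy by $C_\delta(\tfrac12\sqrt t+4\sqrt t)\le C_\delta'\sqrt t$, which is $\le\eta$ once $t=1-a$ is small enough depending only on $\eta$ and $\delta$; adding the two pieces gives $\tfrac12\int_{D(0,1)}|\n M|^2\le\pi+\eta$, i.e. (ii), and the construction is complete. The one genuinely delicate point is this last balancing: the collar must be thin ($r\sim t^{3/2}$) so that the radial interpolation is cheap, yet $\|g\|_{H^1(\mathbb S^1)}^2\sim 1/t$ blows up as the Blaschke factor concentrates, and the exponent $3/2$ is precisely what forces both terms of the collar energy to tend to $0$.
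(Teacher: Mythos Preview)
Your argument is correct (with the innocuous caveat that $\tilde g$ is only Lipschitz, not $C^1$, at the matching points $|\theta|=\delta/2,\delta$; this is enough for the energy estimate). It is, however, a genuinely different construction from the paper's.

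The paper builds the boundary data directly: it takes the \emph{modified} M\"obius map
\[
\Psi_t(\e^{\imath\theta})=\frac{\e^{\imath\theta}-(1-t\varphi(\theta))}{\e^{\imath\theta}(1-t\varphi(\theta))-1},
\]
where $\varphi$ is a smooth cutoff supported in $(-\delta,\delta)$, so that $\Psi_t\equiv1$ for $|\theta|>\delta$ by design. It then lets $M$ be the \emph{harmonic extension} of $\Psi_t$ and computes the Dirichlet energy by Fourier series, using Lemma~\ref{L3.18} (which compares the Fourier coefficients $b_k$ of $\Psi_t$ with those $c_k$ of the honest Blaschke factor) to obtain $\tfrac12\int|\nabla M|^2=\pi+\mathcal O(t^2)$. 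No parameter balancing is needed: the harmonic extension is already the energy minimizer for its boundary data, and the boundary modification via $\varphi$ costs $\mathcal O(t^2)$ at the level of Fourier coefficients.

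Your route is more hands-on and self-contained: you keep the \emph{unmodified} Blaschke factor $N_a$, rescale it to a slightly smaller disc, and pay for the boundary correction by a radial linear interpolation in a collar of width $r$. This avoids the Fourier machinery entirely, at the price of having to balance $r\sim t^{3/2}$ against the blow-up $\|\partial_\theta N_a\|_{L^2(\mathbb S^1)}^2\sim 1/t$. The paper's approach recycles ingredients (the map $\Psi_t$ and Lemma~\ref{L3.18}) already needed in Section~\ref{3.sectionPREUVE}, so for them this lemma is essentially free; your approach would be preferable in a context where that machinery is not already available.
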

\noindent {\bf Claim: }Taking $\overline{M_{\eta,\delta}}$ instead of $M_{\eta,\delta}$, we obtain the same conclusions replacing the assertion {\it i)} by $\deg_{\mathbb{S}^1}(\overline{M_{\eta,\delta}})=-1$.
\begin{proof}
As in section \ref{3.sectionPREUVE}, let $\varphi\in C^\infty(\R,\R)$ be s.t. 
\begin{itemize}\item $0\leq\varphi\leq1$, 
\item $\varphi$ is even and $2\pi$-periodic, 
\item $\varphi_{|(-\delta/2,\delta/2)}\equiv1$ and $\varphi_{|[-\pi,\pi[\setminus(-\delta,\delta)}\equiv0$.
\end{itemize}
For $0<t<\delta$, let $M_t=M$ be the unique solution of 
\begin{equation}\no
\left\{\begin{array}{cccl}
M(\e^{\imath\theta})&=&\d\frac{\e^{\imath\theta}-(1-t\varphi(\theta))}{\e^{\imath\theta}(1-t\varphi(\theta))-1}&\text{on }\p D(0,1)
\\
\Delta M&=&0&\text{in }D(0,1)
\end{array}\right..
\end{equation}
It follows easily that $M$ satisfies {\it i)}, {\it iii)} and {\it iv)}. We will prove that for $t$ small {\it ii)} holds.

Using \eqref{3.decdeM}, we have
\begin{equation}\label{3.decdeMbis}
\frac{\e^{\imath\theta}-(1-t\varphi(\theta))}{\e^{\imath\theta}(1-t\varphi(\theta))-1}=(1-tb_{-1}(t))+t\sum_{k\neq-1}{b_k(t)\e^{(k+1)\imath\theta}}.
\end{equation}
It is not difficult to see that
\begin{equation}\label{3.decdeMbistous}
M(r\e^{\imath\theta})=(1-tb_{-1}(t))+t\sum_{k\neq-1}{b_k(t)r^{|k+1|}\e^{(k+1)\imath\theta}}.
\end{equation}
From \eqref{3.decdeMbistous},
\begin{eqnarray}\no
\frac{1}{2}\int_{D(0,1)}|\n M|^2&=&t^2\int_0^{2\pi}\di\theta\int_0^1\di r\sum_{k\neq-1}{b_k^2(k+1)r^{2|k+1|-2}}
\\\no
&=&\pi t^2\sum_{k\geq0}{b_k^2(k+1)}+\pi t^2\sum_{k\leq-2}{|k+1|b_k^2}
\\\no%\label{3.decdanslasomme0}
&=&\pi t^2\sum_{k\geq0}{c_k^2(k+1)}+\mathcal{O}(t^2)\text{ (using Lemma \ref{L3.18})}
\\\no%\label{3.decdanslasomme1}
&=&\pi(2-t)^2 t^2\sum_{k\geq0}{(1-t)^{2k}(k+1)}+\mathcal{O}(t^2)\text{ (using Lemma \ref{L3.18})}
\\\no%\label{3.decdanslasomme2}
&=&\pi+\mathcal{O}(t^2)\text{ (using \eqref{3A.2} and \eqref{3A.3})}
\\\no
&\leq&\pi+\eta\text{ for $t$ small}.
\end{eqnarray}
We finish the proof taking, for $t$ small, $M_{\eta,\delta}=M_t$.
\end{proof}
\begin{lem}\label{L3.bougedegfinal}
Let $u\in\J$, $i\in\{0,...,N\}$ and $\v>0$. For all $\eta>0$, there is 
\[
u_\eta^\pm\in\J_{\deg(u,\dom)\pm{\bf e}_i}
\]
s.t.
\begin{equation}\label{3.bougedegeta}
E_\v(u_\eta^\pm)\leq E_\v(u)+\pi+\eta
\end{equation}
%\begin{equation}\label{3.normeinf}
%\|u^\pm_\eta\|_{L^\infty(\dom)}\leq\max(2+2\eta^2,\|u\|_{L^\infty(\dom)})
%\end{equation}
and
\begin{equation}\label{3.convennormeL2}
\|u-u_\eta^\pm\|_{L^2(\dom)}=o_\eta(1),\:o_\eta(1)\underset{\eta\goto0}{\goto}0.
\end{equation}
\end{lem}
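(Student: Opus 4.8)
The plan is to transplant the model map $M_{\eta,\delta}$ of Lemma~\ref{L3.lemmebougedeg1} into a small neighbourhood of a point of $\p\omega_i$ by a conformal chart, to multiply $u$ by the resulting function, and to use the conformal invariance of the Dirichlet integral to keep the added energy below $\pi+\eta$; I write $\omega_0:=\O$. First I would reduce to the case where $u$ is smooth, with $|u|\le1$, in a fixed neighbourhood of some $x_0\in\p\omega_i$. This is legitimate because smooth maps are dense in $\J$: writing $u=\hat u+(u-\hat u)$, where $\hat u$ is the componentwise harmonic extension of the trace $u_{|\p\dom}\in H^{1/2}(\p\dom,\mathbb S^1)$, one approximates $u-\hat u\in H^1_0(\dom)$ by $C^\infty_c(\dom)$ functions and the trace by smooth $\mathbb S^1$-valued maps whose harmonic extensions $w_n$ lie in $\J$, are smooth up to $\p\dom$ and satisfy $|w_n|\le1$ by the maximum principle (the modulus of a harmonic $\mathbb C$-valued map is subharmonic). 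This gives $v_n\in\J$, smooth near $\p\dom$ with $|v_n|\le1$ there, with $v_n\to u$ in $H^1$, hence $E_\v(v_n)\to E_\v(u)$ and $\deg(v_n,\dom)=\deg(u,\dom)$ for $n$ large; granting the lemma for such $v_n$ (with $\eta/3$ in place of $\eta$), the triangle inequality in $L^2$ together with $v_n\to u$ in $L^2$ yields the lemma for $u$ after relabelling.

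Assuming now $u$ smooth with $|u|\le1$ near a fixed $x_0\in\p\omega_i$, I would pick a small Jordan neighbourhood $U\subset\overline\dom$ of $x_0$ inside that region, bounded by an arc $\Gamma:=U\cap\p\omega_i$ with $x_0$ interior to $\Gamma$ and by a crosscut lying in $\dom$, and a conformal equivalence $\Phi:U\to D(0,1)$, extended to a homeomorphism of the closures, normalised so that $\Phi(x_0)=1$ and $\Phi(\Gamma)=\{z\in\mathbb S^1:|\arg z|<\pi/2\}$. Fix $\delta=\pi/4$, and for small $t>0$ let $M_t$ be the map of Lemma~\ref{L3.lemmebougedeg1}: recall $M_t\equiv1$ off $\{|\arg z|\ge\delta\}$, $\deg_{\mathbb S^1}M_t=1$, $|M_t|\le1$ (maximum principle for the subharmonic $|M_t|$), $\tfrac12\int_{D(0,1)}|\n M_t|^2=\pi+\mathcal O(t^2)$, and $M_t\to1$ pointwise on $D(0,1)$ (it is the Poisson integral of boundary data tending to $1$ in $L^1(\mathbb S^1)$). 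Set $\phi_t:=M_t\circ\Phi$ on $U$ and $\phi_t:=1$ on $\dom\setminus U$; since $\Phi(\partial U\cap\dom)\subset\{|\arg z|\ge\pi/2\}\subset\{M_t\equiv1\}$, this glues into $\phi_t\in H^1(\dom)$. Finally put $u_\eta^{-}:=u\,\phi_t$ and $u_\eta^{+}:=u\,\overline{\phi_t}$ (the latter using the Claim after Lemma~\ref{L3.lemmebougedeg1}). As $|\phi_t|\equiv1$ on $\p\dom$, both lie in $\J$; as $\phi_t\equiv1$ on every boundary component except, along $\Gamma$, on $\p\omega_i$, and $\deg_{\p\omega_i}\phi_t=-\deg_{\mathbb S^1}M_t=-1$ — the sign coming from the orientation of $\p\omega_i$ on $\Gamma$ being opposite to that of $\partial U$ — one gets $u_\eta^{\pm}\in\J_{\deg(u,\dom)\pm{\bf e}_i}$ (exchanging $M_t$ and $\overline{M_t}$ if the orientation works out the other way).

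For the energy, on $U$ the bounds $|\phi_t|\le1$ and $|u|\le1$ give the pointwise estimate $|\n(u\phi_t)|^2\le|\n u|^2+|\n\phi_t|^2+2|\n u|\,|\n\phi_t|$, whence by conformal invariance of the Dirichlet integral and Cauchy--Schwarz
\[
\tfrac12\int_U|\n(u\phi_t)|^2\le\tfrac12\int_U|\n u|^2+\tfrac12\int_{D(0,1)}|\n M_t|^2+\|\n u\|_{L^2(U)}\left(\int_{D(0,1)}|\n M_t|^2\right)^{1/2}.
\]
For the potential, $|u\phi_t|\le|u|$ yields $(1-|u\phi_t|^2)^2-(1-|u|^2)^2\le4(1-|\phi_t|^2)$ pointwise, and $\int_U(1-|\phi_t|^2)\to0$ as $t\to0$ by dominated convergence ($\phi_t\to1$ pointwise on $U$, $0\le1-|\phi_t|^2\le1$). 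Since $E_\v(u,\dom\setminus U)\le E_\v(u)$ and $\tfrac12\int_{D(0,1)}|\n M_t|^2=\pi+\mathcal O(t^2)$, this gives
\[
E_\v(u_\eta^{\pm})\le E_\v(u)+\pi+\mathcal O(t^2)+\|\n u\|_{L^2(U)}\bigl(2\pi+\mathcal O(t^2)\bigr)^{1/2}+o_t(1),
\]
the same bound holding for $u_\eta^{+}$ since $\overline{M_t}$ has the same modulus and Dirichlet energy. As $\|\n u\|_{L^2(U)}\to0$ when $|U|\to0$, one chooses first $U$ small, then $t$ small, to get $E_\v(u_\eta^{\pm})<E_\v(u)+\pi+\eta$. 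Finally $\|u-u_\eta^{\pm}\|_{L^2(\dom)}=\|u(1-\phi_t^{\pm})\|_{L^2(U)}\to0$ as $t\to0$, again by dominated convergence ($\phi_t^{\pm}\to1$ pointwise on $U$, $|u(1-\phi_t^{\pm})|\le2|u|\in L^2$).

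The hard part is the energy estimate for a merely $H^1$ datum $u$: without continuity of $|u|$ near $x_0$ the term $\int_U|u|^2|\n\phi_t|^2$ cannot be pushed below $2\pi$, which is precisely why the density reduction of the first step is needed; once $u$ is regular near $x_0$, conformal invariance of the Dirichlet integral together with $|M_t|\le1$ closes the computation. A secondary point requiring care is the orientation bookkeeping fixing the sign of $\deg_{\p\omega_i}\phi_t$, which is what guarantees that both changes $\pm{\bf e}_i$ are attainable.
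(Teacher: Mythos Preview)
Your argument is correct and follows the same strategy as the paper: reduce by density to $u$ continuous near the boundary, conformally transplant the model map $M_{\eta,\delta}$ of Lemma~\ref{L3.lemmebougedeg1} into a small half-disc at a boundary point via Carath\'eodory, multiply $u$ by the resulting factor, and control the added Dirichlet energy by conformal invariance. Your treatment of the potential term---using $|M_t|\le1$ (which indeed follows from the maximum principle, though the paper only records $|M_t|\le2$) together with the pointwise inequality $(1-|u\phi_t|^2)^2-(1-|u|^2)^2\le 4(1-|\phi_t|^2)$ and dominated convergence---is slightly cleaner than the paper's crude bound $\tfrac{1}{4\v^2}\int_{V_\eta}(1-|u_\eta^+|^2)^2\le C|V_\eta|/\v^2$, which forces the auxiliary constraint $\eta<\v^2$, but the overall architecture is identical.
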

\begin{proof}
We prove that for $i=0$, there is $u^+_\eta\in\J_{\deg(u,\dom)+{\bf e}_i}$ satisfying \eqref{3.bougedegeta} and \eqref{3.convennormeL2}. In the other cases the proof is similar.

Using the density of $C^0(\overline{\dom},\C)\cap\J$ in $\J$ for the $H^1$-norm, we may assume $u\in C^0(\overline{\dom},\C)\cap\J$.

It suffices to prove the result for $0<\eta<\min\{10^{-3},\v^2\}$.

Let $x^0\in\p\O$ and $V_\eta$ be an open regular set of $\dom$ s.t. :
\begin{enumerate}[$\bullet$]
\item $\p V_\eta\cap\p\dom\neq\emptyset$, $\d|V_\eta|\leq\eta^2$, 
\item $x^0$ is an interior point of $\p\O\cap\p V_\eta$,
\item $V_\eta$ is simply connected,
\item $|u|^2\leq\d1+\eta^2$ in $V_\eta$,
\item $\|\n u\|_{L^2(V_\eta)}\leq \eta^2$.
\end{enumerate}
Using the Carathéodory's theorem, there is 
\[
\Phi: \overline{V_\eta}\rightarrow \overline{D(0,1)},
\]
a homeomorphism s.t. $\Phi_{| V_\eta}:V_\eta\goto D(0,1)$ is a conformal mapping.

Without loss of generality, we may assume that $\Phi(x^0)=1$. Let $\delta>0$ be s.t. for $|\theta|\leq\delta$ we have $\Phi^{-1}(\e^{\imath\theta})\in\p V_\eta\cap\p\O$.

Let $N_\eta\in\J$ be defined by
\[
N_\eta(x)=\left\{\begin{array}{cl}1&\text{if }x\in\dom\setminus V_\eta\\ M_{\eta^2,\delta}(\Phi(x))&\text{otherwise}
\end{array}\right..
\]
Here, $M_{\eta^2,\delta}$ is defined by Lemma \ref{L3.lemmebougedeg1}. Using the conformal invariance of the Dirichlet functional, we have
\begin{equation}\label{3.invconf}
\frac{1}{2}\int_{V_\eta}{|\n N_\eta|^2}=\frac{1}{2}\int_{D(0,1)}{|\n M_{\eta^2,\delta}|^2}\leq \pi+\eta^2.
\end{equation}

It is not difficult to see that $u^+_\eta:=uN_\eta\in\J_{\deg(u,\dom)+{\bf e}_0}$. Since $|N_{\eta}|\leq2$ and $\|N_\eta-1\|_{L^2(\dom)}=o_\eta(1)$, using the Dominated convergence theorem, we may prove that $uN_\eta\rightarrow u$ in $L^2(\dom)$ when $\eta\rightarrow0$. It follows that \eqref{3.convennormeL2} holds.

From \eqref{3.invconf} and using the following formula,
\[
|\n (uv)|^2=|v|^2|\n u|^2+|u|^2|\n v|^2+2\sum_{j=1,2}(v\p_ju)\cdot(u\p_j v)
\]
we obtain
\begin{eqnarray}\no
\frac{1}{2}\int_{V_\eta}{|\n u^+_\eta|^2}&=&\frac{1}{2}\int_{V_\eta}{\left\{|N_\eta|^2|\n u|^2+|u|^2|\n N_\eta|^2+2\sum_{j=1,2}(N_\eta\p_ju)\cdot(u\p_j N_\eta)\right\}}
\\\no
&\leq&(1+\eta^2)(\pi+\eta^2)+2\|\n u\|_{L^2(V_\eta)}^2+4\sqrt{1+\eta^2}\|\n u\|_{L^2(V_\eta)}\|\n N_\eta\|_{L^2(V_\eta)}
\\\label{3.dirdansVeta}
&\leq&\pi+\frac{\eta}{2}.
\end{eqnarray}
Furthermore, we have
\begin{equation}\label{3.restedansVeta}
\frac{1}{4\v^2}\int_{V_\eta}{(1-|u_\eta^+|^2)^2}\leq\frac{\eta^2}{4\v^2}\leq\frac{\eta}{2}.
\end{equation}
From \eqref{3.dirdansVeta} and \eqref{3.restedansVeta}, it follows
\[
E_\v(u^+_\eta,\dom)=E_\v(u,\dom\setminus V_\eta)+E_\v(u^+_\eta, V_\eta)\leq E_\v(u,\dom)+\pi+\eta.
\]
The previous inequality completes the proof.
\end{proof}
We may now prove Lemma \ref{P3.bougedegmult}. For the convenience of the reader, we recall the statement of the lemma.

\begin{lemimnoi}
Let $u\in\J$, $\v>0$ and ${\bf \delta}=(\delta_1,...,\delta_N,\delta_0)\in\Z^{N+1}$. For all $\eta>0$, there is $u_\eta^\delta\in\J_{\deg(u,\dom)+\delta}$ s.t.
\begin{equation}\tag{\ref{3.bougedegetamul}}
E_\v(u_\eta^\delta)\leq E_\v(u)+\pi\sum_{i\in\{0,...,N\}}{|\delta_i|}+\eta
\end{equation}

and
\begin{equation}\tag{\ref{3.convennormeL2mul}}
\|u-u_\eta^\delta\|_{L^2(\dom)}=o_\eta(1),\:o_\eta(1)\underset{\eta\goto0}{\goto}0.
\end{equation}
\end{lemimnoi}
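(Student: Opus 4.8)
The plan is to deduce Lemma~\ref{P3.bougedegmult} from the single-step result Lemma~\ref{L3.bougedegfinal} by iterating it $|\delta_0|+\sum_{i=1}^N|\delta_i|$ times, moving one unit of degree at a time. I would argue by induction on the integer $m:=|\delta_0|+\sum_{i=1}^N|\delta_i|$. If $m=0$ then $\delta=0$ and $u_\eta^\delta:=u$ trivially satisfies both \eqref{3.bougedegetamul} and \eqref{3.convennormeL2mul}.

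For the inductive step, assume the statement holds for every $\delta'\in\Z^{N+1}$ with $|\delta_0'|+\sum_i|\delta_i'|=m-1$, and let $\delta$ have $|\delta_0|+\sum_i|\delta_i|=m\ge1$. Choose $i_0\in\{0,\dots,N\}$ with $\delta_{i_0}\neq0$, put $\sigma:=\delta_{i_0}/|\delta_{i_0}|\in\{\pm1\}$ and $\delta':=\delta-\sigma{\bf e}_{i_0}$, so that $|\delta_0'|+\sum_i|\delta_i'|=m-1$. Given $\eta>0$, first apply Lemma~\ref{L3.bougedegfinal} to $u$ with index $i_0$, sign $\sigma$ and parameter $\eta/2$; this produces $v\in\J_{\deg(u,\dom)+\sigma{\bf e}_{i_0}}$ with
\[
E_\v(v)\leq E_\v(u)+\pi+\eta/2\qquad\text{and}\qquad\|u-v\|_{L^2(\dom)}=o_\eta(1).
\]
Since $v\in\J$, the induction hypothesis applied to $v$ and $\delta'$ with parameter $\eta/2$ yields $u_\eta^\delta:=v_{\eta/2}^{\delta'}\in\J_{\deg(v,\dom)+\delta'}=\J_{\deg(u,\dom)+\delta}$ with $E_\v(u_\eta^\delta)\leq E_\v(v)+\pi(m-1)+\eta/2\leq E_\v(u)+\pi m+\eta$, which is \eqref{3.bougedegetamul}; and the triangle inequality $\|u-u_\eta^\delta\|_{L^2(\dom)}\leq\|u-v\|_{L^2(\dom)}+\|v-u_\eta^\delta\|_{L^2(\dom)}$ gives \eqref{3.convennormeL2mul} once both terms on the right are shown to be $o_\eta(1)$.

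The only point requiring care — and the nearest thing to an obstacle — is that the $o_\eta(1)$ furnished by the induction hypothesis applied to $v$ must be controlled by $\eta$ alone, even though $v=v(\eta)$ drifts with $\eta$. This is settled by the a~priori bound $E_\v(\,\cdot\,)\leq E_\v(u)+\pi m+1$ valid for every intermediate map along the iteration, which (through $\int_\dom(1-|\cdot|^2)^2\leq 4\v^2E_\v(\cdot)$) gives a uniform bound on the $L^2$-norms of those maps; combined with the fact that in Lemma~\ref{L3.bougedegfinal} the modification is supported in a set of measure $\leq\eta^2$ on which the new map is pointwise at most $3$ times the old one in modulus, the $L^2$-discrepancy at each step is at most $3$ times the $L^2$-norm of a fixed-energy map over a set of measure $\leq\eta^2$, which is $o_\eta(1)$ uniformly. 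Equivalently, one may simply run the induction allocating the error $\eta/m$ to each of the $m$ steps and sum the $m$ discrepancies. There is no deeper difficulty: all the content sits in Lemma~\ref{L3.bougedegfinal}, and the present statement is bookkeeping.
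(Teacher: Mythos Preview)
Your proposal is correct and follows essentially the same approach as the paper: both iterate Lemma~\ref{L3.bougedegfinal} a total of $m=\sum_i|\delta_i|$ times, the paper allocating $\eta/m$ per step while you halve, and both identify the only subtle point as the uniformity of the $L^2$ estimate along the iteration. For that last point the paper exploits the multiplicative structure $u_\eta^\delta=u\prod_l N_l$ with $|N_l|\leq2$ and $N_l\to1$ in $L^2$ (dominated convergence), whereas you argue via the uniform energy bound along the chain; either route works and the content is the same.
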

\begin{proof}

As in the previous lemma, it suffices to prove the proposition for  $0<\eta<\min\{10^{-3},\v^2\}$ and $u\in C^0(\overline{\dom},\C)\cap\J$. 

We construct $u^\delta_\eta$ in $\d\ell_1=\sum_{i\in\{0,...,N\}}|\delta_i|$ steps. If $\ell_1=0$ (which is equivalent at $\delta={\bf 0}_{\Z^{N+1}}$) then, taking $u^\delta_\eta=u$, \eqref{3.bougedegetamul} and \eqref{3.convennormeL2mul} hold.

Assume $\ell_1\neq0$. Let $\Gamma=\{i\in\{0,...,N\}\,|\,\delta_i\neq0\}\neq\emptyset$, $L=\text{Card}\,\Gamma$ and $\mu=\d\frac{\eta}{\ell_1}$. We enumerate the elements of $\Gamma$ in $(i_n)_{n\in\N_L}$ s.t. for $n\in\N_{L-1}$ we have $i_n<i_{n+1}$.

Let $\sigma$ be the sign function \emph{i.e.} for $x\in\R^*$, $\d\sigma(x)=\frac{x}{|x|}$.

For $n\in\N_{L}$ and $l\in\N_{|\delta_{i_n}|}$, we construct 
\[
v^l_n\in\J_{\deg(v^{l-1}_n,\dom)+\sigma(\delta_i){\bf e }_{i_n}}
\]
s.t
\[
\begin{array}{c}
 v_0^0=u,\:\:v_{n}^0=v_{n-1}^{|\delta_{i_{n-1}}|} \,\text{ with for $n=1$, $\delta_{i_0}=0$},
 \\
 v_{n}^{l+1}=\left\{\begin{array}{cl}(v^l_n)_\mu^+&\text{if }\delta_{i_n}>0\\(v^l_n)_\mu^{-}&\text{if }\delta_{i_n}<0\end{array}\right.,\:0\leq l<|\delta_{i_{n}}|
\end{array}.
\]
Here, $(v^l_n)_\mu^\pm$ stands for $u^\pm_\mu$ defined by Lemma \ref{L3.bougedegfinal} taking $u=v^l_n$ and $\eta=\mu$.

It is clear that $v^l_n$ is well defined and  that for $n\in\N_L$, $v_n:=v^{|\delta_{i_n}|}_n\in\J_{\deg(v_{n-1},\dom)+\delta_{i_n}{\bf e }_{i_n}}$ with $v_0=u$.

Therefore, using \eqref{3.bougedegeta}, we have for $n\in\N_L$,
\[
v_n\in\J_{\deg(u,\dom)+\sum_{k\in\N_n}\delta_{i_k}{\bf e }_{i_k}},\: E_\v(v_n)\leq E_\v(u)+ \left(\pi+\mu\right)\sum_{k\in\N_n}|\delta_{i_k}|.
\]
Taking $n=L$, we obtain that
\[
u_\eta^\delta=v_L\in\J_{\deg(u,\dom)+\delta},\: E_\v(u_\eta^\delta)\leq E_\v(u)+\pi\sum_{i\in\{0,...,N\}}{|\delta_i|}+\eta.
\] 
Furthermore, $u_\eta^\delta$ is obtained from $u$ multiplying  by $\ell_1$ factors $N_l$, $l\in\N_{\ell_1}$. Each $N_l$ is bounded by $2$ and converges to $1$ in $L^2$-norm (when $\eta\goto0$). Using the Dominated convergence theorem, we may prove that $u_\eta^\delta$ satisfies \eqref{3.convennormeL2mul}.

\end{proof}
%\end{subappendices}
\noindent\emph{Acknowledgements.} The author would like to express his gratitude to Professor Petru Mironescu for suggesting him to study the problem treated in this paper and for his useful remarks. 

\bibliography{biblio}
\end{document}